\definecolor{solidgreen}{RGB}{34, 139, 34}
\definecolor{fluidblue}{RGB}{0, 102, 204}
\definecolor{interfaceorange}{RGB}{255, 140, 0}
\definecolor{refgray}{RGB}{120, 120, 120}
\theoremstyle{plain}
\newtheorem{thm}{Theorem}[section]
\newaliascnt{cor}{thm}
\newaliascnt{prop}{thm}
\newaliascnt{lem}{thm}
\newtheorem{cor}[cor]{Corollary}
\newtheorem{prop}[prop]{Proposition}
\newtheorem{lem}[lem]{Lemma}
\theoremstyle{definition}
\newaliascnt{defn}{thm}
\newaliascnt{asu}{thm}
\newaliascnt{con}{thm}
\newtheorem{defn}[defn]{Definition}
\newcounter{stp}
\newcounter{stpi}
\newcounter{stpci}
\newcounter{stpiii}
\theoremstyle{remark}
\newaliascnt{rem}{thm}
\newaliascnt{exa}{thm}
\newaliascnt{masu}{thm}
\newaliascnt{nota}{thm}
\newaliascnt{sett}{thm}
\newtheorem{rem}[rem]{Remark}
\numberwithin{equation}{section}
\setlist[enumerate]{font = \normalfont}
\newcommand{\F}{\mathbb{F}}
\newcommand{\E}{\mathbb{E}}
\newcommand{\R}{\mathbb{R}}
\newcommand{\C}{\mathbb{C}}
\newcommand{\B}{\mathbb{B}}
\newcommand{\N}{\mathbb{N}}
\newcommand{\bP}{\mathbb{P}}
\newcommand{\D}{\mathbb{D}}
\newcommand{\oB}{\overline{\B}}
\newcommand{\tE}{\Tilde{\mathbb{E}}}
\newcommand{\rW}{\mathrm{W}} 
\newcommand{\rL}{\mathrm{L}}
\newcommand{\rG}{\mathrm{G}}
\newcommand{\rH}{\mathrm{H}}
\newcommand{\rC}{\mathrm{C}}
\newcommand{\rB}{\mathrm{B}}
\newcommand{\rD}{\mathrm{D}}
\newcommand{\rX}{\mathrm{X}}
\newcommand{\rY}{\mathrm{Y}}
\newcommand{\BUC}{\mathrm{BUC}}
\newcommand{\rd}{\mathrm{d}}
\newcommand{\srd}{\,\mathrm{d}}
\newcommand{\fs}{\mathrm{fs}}
\newcommand{\mfs}{\mathrm{mfs}}
\newcommand{\fl}{\mathrm{fl}}
\newcommand{\pr}{\mathrm{pr}}
\newcommand{\mre}{\mathrm{e}}
\newcommand{\mri}{\mathrm{i}}
\newcommand{\ess}{\mathrm{ess}}
\newcommand{\rf}{\mathrm{f}}
\newcommand{\rs}{\mathrm{s}}
\newcommand{\rp}{\mathrm{p}}
\renewcommand{\rm}{\mathrm{m}}
\newcommand{\cS}{\mathcal{S}}
\newcommand{\cF}{\mathcal{F}}
\newcommand{\cL}{\mathcal{L}}
\newcommand{\cK}{\mathcal{K}}
\newcommand{\cJ}{\mathcal{J}}
\newcommand{\cT}{\mathcal{T}}
\newcommand{\cR}{\mathcal{R}}
\newcommand{\cH}{\mathcal{H}}
\newcommand{\Hinfty}{\cH^\infty}
\newcommand{\cBIP}{\mathcal{BIP}}
\newcommand{\td}{\Tilde{d}}
\newcommand{\tA}{\Tilde{A}}
\newcommand{\tB}{\Tilde{B}}
\newcommand{\tC}{\Tilde{C}}
\newcommand{\eps}{\varepsilon}
\newcommand{\del}{\partial}
\DeclareMathOperator{\diag}{diag}
\DeclareMathOperator{\Id}{Id}
\DeclareMathOperator{\mdiv}{div}
\DeclareMathOperator{\Rep}{Re}
\DeclareMathOperator*{\Cof}{Cof}
\newcommand{\tin}{\enspace \text{in} \enspace}
\newcommand{\ton}{\enspace \text{on} \enspace}
\newcommand{\tfor}{\enspace \text{for} \enspace}
\newcommand{\tforall}{\enspace \text{for all} \enspace}
\newcommand{\tand}{\enspace \text{and} \enspace}
\newcommand{\tif}{\enspace \text{if} \enspace}
\newcommand{\twhere}{\enspace \text{where} \enspace}
\newcommand{\twith}{\enspace \text{with} \enspace}
\begin{document}

\title[Strong time-periodic solutions to a multilayered FSI problem]{Strong time-periodic solutions for a multilayered fluid-structure interaction system with nonlinear coupling}

\author{Felix Brandt*}
\thanks{*Corresponding Author}
\address{Department of Mathematics, University of California at Berkeley, Berkeley, 94720, CA, USA.}
\email{fbrandt@berkeley.edu}
\author{Claudiu M\^{i}ndril\u{a}}
\address{Basque Center for Applied Mathematics (BCAM), Alameda de Mazarredo 14, 48009 Bilbao, Spain.}
\email{cmindrila@bcamath.org}
\author{Arnab Roy}
\address{Basque Center for Applied Mathematics (BCAM), Alameda de Mazarredo 14, 48009 Bilbao, Spain.}
\address{IKERBASQUE, Basque Foundation for Science, Plaza Euskadi 5, 48009 Bilbao, Bizkaia, Spain.}
\email{aroy@bcamath.org}
\subjclass[2020]{74F10, 35R35, 35B10, 35Q30, 74H20, 35K59}
\keywords{Multilayered fluid-structure interaction problem, strong time-periodic solutions, nonlinear coupling, damped wave problem, multilayered fluid-structure operator, decoupling approach, Arendt-Bu theorem}

\begin{abstract}
We investigate a time-periodic fully three-dimensional fluid-structure interaction system in which the Navier-Stokes equations for an incompressible viscous fluid are coupled with a multilayered elastic structure composed of a damped thin linear plate and a thick viscoelastic layer. 
The coupling is nonlinear, meaning that it is on a moving interface that is not known a priori, rendering the problem a moving-domain problem.
We prove the existence of strong time-periodic solutions. 
The proof relies on a fixed point argument, combining sharp nonlinear estimates with a detailed analysis of the linearized system. The linearized problem is analyzed by employing the Arendt-Bu theorem on maximal periodic $\rL^p$-regularity, which requires several new analytical ingredients including a refined lifting procedure, a decoupling strategy establishing $\mathcal{R}$-sectoriality of the coupled operator, a careful treatment of the thick structural layer, and a spectral analysis adapted to the multilayered setting. This provides \textit{the first strong time-periodic existence result for multilayered fluid-structure interaction systems}, and the methods are expended to extend more broadly to nonlinear coupled PDEs on moving domains with periodic forcing.
\end{abstract}

\maketitle

\section{Introduction}\label{sec:intro}

Fluid-structure interaction (FSI) problems with dynamic interfaces have become a prominent subject of research, driven by both theoretical challenges and practical applications. 
A natural classification arises based on the position of the structure relative to the fluid: 
Some models describe structures immersed in the fluid, while others consider structures attached to the boundary of the fluid domain.
Thorough reviews of analytical developments and practical applications in FSI theory can be found in the excellent surveys by Kaltenbacher et al.~\cite{KKLTTW:18} and \v{C}ani\'c \cite{Can:21} and the references cited therein.

The present paper establishes for the first time the existence of a strong time-periodic solution to a fully three-dimensional {\em multilayered} FSI problem subject to time-periodic forcing terms.
In fact, we consider the nonlinearly coupled interaction problem of a three-dimensional viscous, incompressible Newtonian fluid with a multilayered structure consisting of a two-dimensional thin plate-type and a three-dimensional thick (visco-)elastic layer.
Here the term {\em nonlinear coupling} means that the fluid-structure interface is not fixed, but it is evolving in time, giving rise to a moving boundary problem.
The model will be introduced in detail in \autoref{sec:multilayered FSI probl & main result} and can be found in \eqref{eq:multilayered fsi}.
The motivation for studying multilayered FSI problems is strongly rooted in biomedical applications, where many physiological structures, such as blood vessels, are composed of multiple elastic and viscoelastic layers.
We refer here for example to plaque growth in arteries, which is a key factor in atherosclerosis and whose modeling and numerical simulations can, e.g., be found in \cite{Gor:17, PRVVZ:21}.
Another application of multilayered FSI problems concerns bioartificial organ design, see for instance \cite{WCBBR:22}.
 
Previous studies in the context of boundary coupled FSI problems primarily focused on the analysis of the initial value (Cauchy) problem, and are mainly restricted to the situation of a single, thin structure attached to the fluid boundary.
First results on the existence of weak solutions to incompressible fluids coupled with elastic plates are due to Chambolle et al.~\cite{CDEG:05} and Grandmont \cite{Gra:08}.
The analysis was extended to models involving linear elastic \cite{LR14} and nonlinear \cite{MS:22} Koiter shells.
A constructive approach to the existence of weak solutions to incompressible fluid-linear elastic shell systems via the Arbitrary Lagrangian-Eulerian (ALE) method was developed by Muha and \v{C}ani\'c \cite{MC13}.
More recently, the same authors extended in \cite{MC:14} the scope of FSI models to incorporate multilayered structures, featuring the coupling of a thin shell with an additional thick elastic layer, and laying the groundwork for more realistic biomechanical and engineering applications.
The model from \cite{MC:14} is the basis for the present work.

Concerning {\em strong} solutions to FSI problems with elastic structures located at the fluid boundary, a first local-in-time well-posedness result for fluid-structure systems involving viscoelastic beams and small initial data was established in the foundational work \cite{BdV:04}.
Remarkably, Grandmont and Hillairet \cite{GH:16} proved the global-in-time strong well-posedness of such FSI problems in a 2D fluid-1D beam setting for large data. 
Strong solutions to FSI problems involving nonlinear elastic Koiter shells were studied in~\cite{CS:10, MRR:20}, while approaches to FSI problems of incompressible fluids and damped (thin) structures via $\rL^p$-theory have been carried out by Denk and Saal \cite{DS:20} and Maity and Takahashi \cite{MT:21}.
Let us emphasize that all of these works concern the initial value problem, and none of these works addresses the situation of a multilayered structure. 

There are contributions on multilayered and coupled PDE systems, including studies on multilayered Lam\'{e}-heat systems or heat-wave-wave interactions, which provide insight into stability, decay, and spectral properties of multilayered coupled systems in linear settings \cite{ALT:16, AGM:20, AG:22, AGM:22}.
Multilayered FSI problems involving poroelastic structures were studied in the linearly coupled case in \cite{BCMW:21} and in the nonlinearly coupled case in \cite{KCM:24, BCM:25}.
Recently, attention has also turned to FSI problems of greater complexity, including the case of compressible or heat-conducting fluids coupled with beams and shells, see, e.g., \cite{BS18, MRT:21, MMNRT:22}.

The study of {\em time-periodic} problems is a classical topic in mathematical fluid mechanics.
We refer here for instance to the pioneering articles by Serrin \cite{Ser:59}, Judovi\v{c} \cite{Jud:60} and Prodi \cite{Pro:60} as well as the works of Kozono and Nakao \cite{KN:96}, Galdi and Sohr \cite{GS:04} or Geissert, Hieber and Nguyen \cite{GHN:16}. 
On the other hand, time-periodicity plays a central role in modeling phenomena where cyclic behavior is inherent, especially in biomedical applications.
For example, the rhythmic nature of blood flow, governed by the heart's repetitive pumping, naturally leads to models with periodic forcing.
However, despite their relevance, FSI problems in the time-periodic setting remain underexplored.
Several recent studies have taken important steps toward overcoming the associated analytical difficulties.
In the case of incompressible fluids interacting with (single) elastic structures, results on strong solutions have been obtained by Casanova \cite{Casanova} as well as Celik and Kyed \cite{CK:21}.
The existence of weak solutions to FSI problems in time-periodic settings has been rigorously investigated in \cite{Claudiu22, Claudiu23} and \cite{MR25}, where the latter work addresses a multilayered FSI problem.
For time-periodic solutions to FSI problems involving different fluid models, such as compressible ones, and structural configurations, we also refer to \cite{JPP:20, PP:24, KMNT:25}.
Time-periodic weak solutions to parabolic-hyperbolic systems have also been investigated in \cite{MMSW:24}, while time-periodic fluid-{\em rigid body} interaction problems have been explored, e.g., by Bonheure and Galdi \cite{BG:25}.

In the following, we describe the strategy of the proof of the strong time-periodic result.
Due to the nonlinear coupling and the resulting time-dependent fluid domain, we employ a change of variables that acts locally in space to reduce the problem to the situation of a fixed fluid domain problem, but with terms that are non-local in time.
We also rewrite the main result in this reference configuration.

In order to tackle the full, nonlinear, time-periodic problem, we use a fixed point argument, which in turns requires a good understanding of the linearized problem combined with suitable nonlinear estimates.
Our approach relies on the {\em Arendt-Bu theorem} \cite{AB:02} based on operator-valued discrete Fourier multipliers.
Roughly speaking, the latter theorem characterizes the strong solvability of the linearized {\em time-periodic problem} in terms of maximal $\rL^p$-regularity of the {\em initial value problem} and a spectral condition on the semigroup, or, equivalently, its generator, see also \autoref{prop:Arendt-Bu thm}.
For further background on operator theory and maximal $\rL^p$-regularity, we also refer to the monographs \cite{Ama:95, DHP:03, PS:16} or the book chapter \cite{KW:04}.

With regard to the Arendt-Bu theorem, it is a major task to establish the maximal $\rL^p$-regularity of the initial value problem.
In fact, we first show the {\em boundedness of the $\Hinfty$-calculus} of the associated damped wave operator matrix in \autoref{prop:bdd Hinfty & spectral bound thick structural layer}, which especially implies its maximal $\rL^p$-regularity.

For handling the kinematic coupling conditions on equality of velocities and displacements as made precise below in \eqref{eq:cont of velocities} and \eqref{eq:cont of displacement}, we then use a lifting procedure, see \autoref{lem:sol stat Stokes probl with inhom bc} and \autoref{lem:solvability stat Lame probl}.
Inspired by \cite{MT:21} in the case of a single thin structure, see also the works \cite{MT:18, EMT:23, BHR:26} in the context of fluid-rigid body interaction problems, invoking the so-called {\em added mass operator}, and employing the above lifting procedure, we then reformulate the linearized problem in terms of a {\em multilayered fluid-structure operator}~$A_\mfs$ in \autoref{prop:equiv reform lin probl} and \eqref{eq:reform in op form multilayered fs op}.
The strongly coupled nature of the present multilayered FSI problem leads to a highly {\em non-diagonal domain}.
Thus, to establish the maximal $\rL^p$-regularity, or, equivalently, the $\cR$-sectoriality of $A_\mfs$ thanks to the celebrated result by Weis \cite{Wei:01}, we use a decoupling approach inspired by \cite{BBH:24b}.
More precisely, building upon the aforementioned lifting procedure, we invoke a similarity transform that leads to an operator with diagonal domain, but of a more complicated shape.
Together with a suitable decomposition of the decoupled operator matrix and perturbation arguments, this yields the maximal $\rL^p$-regularity of the multilayered fluid-structure operator $A_\mfs$ in \autoref{prop:cR-sect and max reg up to shift}. 

For the linear theory, it then remains to verify the invertibility of $A_\mfs$.
Due to the damping in the thick layer, the spectrum $\sigma(A_\mfs)$ does {\em not} only consist of eigenvalues, so it is insufficient to merely study the associated eigenvalue problem.
Instead, we use another decoupling argument and successively unveil the shape of the essential spectrum.
Joint with the control of the point spectrum, this leads to the invertibility of $A_\mfs$ in \autoref{prop:spectral theory multilayered fs op}, and culminates in the maximal periodic regularity, see \autoref{thm:max per reg multilayered fs op}.

Finally, we derive the nonlinear estimates, where a particular focal point is the handling of the terms emerging from the change of variables to the fixed domain.
Note that the nonlinear estimates required in the periodic setting differ from those typically used in initial value problems.
The fixed point argument is then completed upon combining the maximal periodic regularity with the nonlinear estimates.

Let us elaborate on the main contributions and methodological advances of this article.
To the best of our knowledge, this work appears to be the first to establish {\em the existence of strong solutions for a nonlinearly coupled} multilayered FSI problem, where the fluid is described by the Navier-Stokes equations. 
Moreover, it seems to be the first paper to address strong time-periodic solutions to a multilayered fluid-structure interaction problem.
We believe that by a similar methodology as in the present paper, the corresponding initial value problem could be tackled to establish its local strong well-posedness and global strong well-posedness for small data.
In fact, one would employ maximal $\rL^p$-regularity instead of maximal periodic $\rL^p$-regularity as in this manuscript.
This is left to future study.
The presence of the thick structural layer introduces substantial analytical challenges, requiring significant extensions beyond existing FSI theory. 
A key technical contribution is the bounded $\mathcal{H}^\infty$-calculus of the operator matrix associated with the damped wave system (see \autoref{prop:bdd Hinfty & spectral bound thick structural layer}), which, to the best of our knowledge, is new and may be of independent interest. The presence of the thick layer also necessitates a further step in the lifting procedure, and the resulting multilayered fluid-structure operator is considerably more complex than in the case of a single thin layer as in \cite{MT:21}. 
In particular, to tackle the strongly coupled nature of the present multilayered problem, a more intricate decoupling approach tailored to the functional analytic setting of the damped wave problem is required. As far as we are aware, this is the first time such a technique has been employed in the context of boundary-coupled FSI problems with elastic structures, though it may prove useful in a broader class of FSI models.
Furthermore, the damping in the thick structural layer renders the spectral analysis substantially more delicate. In particular, the determination of the essential spectrum requires a careful decomposition of the decoupled operator matrix. 

This article has the following outline.
In \autoref{sec:not and function spaces}, we settle some notation and make precise the function spaces used throughout this paper.
In \autoref{sec:multilayered FSI probl & main result}, we introduce the time-periodic multilayered FSI problem and present the main result on the existence of a strong time-periodic solution for small periodic forces.
\autoref{sec:trafo to fixed dom} is dedicated to the transformation to a fixed fluid domain.
The central \autoref{sec:analysis of the lin probl} discusses the linear theory for the time-periodic problem, including the damped wave problem, the reformulation in operator form, and the maximal $\rL^p$-regularity and invertibility of the resulting operator matrix.
In \autoref{sec:ex of strong time per sol}, we establish the nonlinear estimates and complete the proof of the main result.

\section{Notation and function spaces}\label{sec:not and function spaces}

In this section, we make precise some pieces of notation and function spaces.
In fact, the subscript $_\rf$ will be used to denote objects related to the fluids, while the subscript $_\rs$ indicates objects associated with the thick structural layer.
On the other hand, the superscript $^\cF$ will be employed for variables on the moving fluid domains.
Moreover, $\Omega$ will be used for three-dimensional domains, whereas $\Gamma$ will represent boundaries or interfaces.
Concerning differential operators, we use $\Delta_s$ and $\nabla_s$ to designate the horizontal Laplacian and gradient, respectively, i.e.,
\begin{equation*}
    \Delta_s = \del_1^2 + \del_2^2 \tand \nabla_s = \binom{\del_1}{\del_2}.
\end{equation*}
For Banach spaces $\rX$ and $\rY$, we denote by $\cL(\rX,\rY)$ the set of bounded linear operators from $\rX$ to $\rY$, with the convention $\cL(\rX) \coloneqq \cL(\rX,\rX)$.

Next, let us introduce the function spaces used throughout the paper.
For a sufficiently regular domain $G \subset \R^n$, $n \in \N$, as well as $m \in \N$ and $p \in [1,\infty]$, we use $\rL^p(G)$ and $\rW^{m,p}(G)$ to denote the Lebesgue and Sobolev spaces.
We will use the subscript $_\rm$ to denote functions with average zero in $\rL^p(G)$, i.e.,
\begin{equation*}
    \rL_\rm^p(G) \coloneqq \left\{f \in \rL^p(G) : \int_G f \srd x = 0\right\}.
\end{equation*}
Besides, $\rW_0^{1,p}(G)$ denotes the functions in $\rW^{1,p}(G)$ with zero trace on $\partial G$.
Moreover, for $s \ge 0$, $p \in (1,\infty)$ and $q \in [1,\infty]$, we denote by $\rH^{s,p}(G)$, $\rW^{s,p}(G)$ and $\rB_{pq}^s(G)$ the Bessel potential spaces, fractional Sobolev or Sobolev-Slobodeckij spaces and Besov spaces, respectively.
Note that the Bessel potential spaces and Besov spaces $\rH^{s,p}(\R^n)$ and $\rB_{pq}^s(\R^n)$ on the whole space can be defined in terms of the Bessel potential and dyadic partitions, respectively, while we set
\begin{equation*}
    \rW^{s,p}(\R^n) \coloneqq \left\{
    \begin{aligned}
        &\rH^{s,p}(\R^n) &&\tif s \in \N_0,\\
        &\rB_{pp}^s(\R^n) &&\tif s \notin \N_0.
    \end{aligned}
    \right.
\end{equation*}
The respective spaces on $G$ can then be obtained via restriction.
We do not provide more details here, but we refer to \cite{Tri:78} instead for further background.
Let us only observe that for $m \in \N$, $p \in (1,\infty)$, $q \in [1,\infty]$ and $\theta \in (0,1)$, we have the interpolation relations
\begin{equation*}
    [\rL^p(G),\rW^{m,p}(G)]_\theta = \rH^{\theta m,p}(G) \tand (\rL^p(G),\rW^{m,p}(G))_{\theta,q} = \rB_{pq}^{\theta m}(G),
\end{equation*}
where $[\cdot,\cdot]_\theta$ and $(\cdot,\cdot)_{\theta,q}$ represent the complex and real interpolation functor, respectively.
For $s \ge 0$ and $p \in (1,\infty)$, we will also use the notation $\rW_\rm^{s,p}(G) \coloneqq \rW^{s,p}(G) \cap \rL_\rm^p(G)$.
In the context of continuous functions, we use the subscript $_\mathrm{c}$ to indicate a compact support, while by $\mathrm{BUC}(G)$, we denote the bounded and uniformly continuous functions on $G$.

We conclude this section by introducing function spaces on moving domains, which will be required to rigorously define functions on the time-dependent fluid domain.
To this end, we consider the reference fluid domain $\Omega_\rf$ and the moving fluid domain $\Omega_\rf(t)$, which will be introduced in \autoref{sec:multilayered FSI probl & main result}. Moreover, we consider a $\rC^1$-diffeomorphism $X$ mapping $\Omega_\rf$ onto $\Omega_\rf(t)$. This mapping will be constructed in \autoref{sec:trafo to fixed dom} and is assumed to satisfy
$X \in \rL^p\bigl(0,T;\rW^{2,q}(\Omega_\rf)^3\bigr)
\cap
\rW^{2,p}\bigl(0,T;\rL^q(\Omega_\rf)^3\bigr)$.
For $p,q \in (1,\infty)$, we then define the following function spaces
\begin{equation*}
    \begin{aligned}
        \rL^p(0,T;\rL^q(\Omega_\rf(\cdot)))
        &\coloneqq \left\{f \circ X^{-1} : f \in \rL^p(0,T;\rL^q(\Omega_\rf))\right\},\\
        \rL^p(0,T;\rW^{2,q}(\Omega_\rf(\cdot)))
        &\coloneqq \left\{f \circ X^{-1} : f \in \rL^p(0,T;\rW^{2,q}(\Omega_\rf))\right\},\\
        \rW^{1,p}(0,T;\rL^q(\Omega_\rf(\cdot)))
        &\coloneqq \left\{f \circ X^{-1} : f \in \rW^{1,p}(0,T;\rL^q(\Omega_\rf))\right\},\\
        \rC([0,T];\rW^{1,q}(\Omega_\rf(\cdot)))
        &\coloneqq \left\{f \circ X^{-1} : f \in \rC([0,T];\rW^{1,q}(\Omega_\rf))\right\},\\
        \rC([0,T];\rB_{qp}^{2-\nicefrac{2}{p}}(\Omega_\rf(\cdot)))
        &\coloneqq \left\{f \circ X^{-1} : f \in \rC([0,T];\rB_{qp}^{2-\nicefrac{2}{p}}(\Omega_\rf))\right\},\\
    \end{aligned}
\end{equation*}
where $(f \circ X^{-1})(t,x) \coloneqq f(t,(X(t,\cdot))^{-1}(x))$ for simplicity of notation.

\section{The multilayered FSI problem and main result}\label{sec:multilayered FSI probl & main result}

In this section, we introduce the multilayered FSI problem and then state the main result on the existence of a strong time-periodic solution to this problem under the action of small time-periodic forces.
First, we describe the underlying geometry.
Let $\Omega_\rf \subset \R^3$ be the reference fluid domain. Its boundary $\del \Omega_\rf$ contains a flat portion $\Gamma_0$ corresponding to the reference configuration of the plate, given by
$\Gamma_0 = \omega \times \{0\}$,
where $\omega \subset \R^2$ is a bounded domain with boundary of class $\rC^4$.
The remaining part of the fluid boundary is denoted by
$\Gamma_\rf \coloneqq \del \Omega_\rf \setminus \overline{\Gamma_0}$,
and represents the rigid boundary of the fluid domain.
Let $\eta$ denote the normal displacement of the plate.
 Then, at time $t$, the plate occupies the deformed surface
\[
\Gamma_\eta(t) = \{ (s,\eta(t,s)) : s \in \omega \}.
\]
For the thick structural layer, we consider a bounded domain $\Omega_\rs \subset \R^3$ with boundary of class $\rC^2$ such that $\Omega_\rs \cap \Omega_\rf = \emptyset,$ 
$\overline{\Omega}_\rs \cap \overline{\Omega}_\rf = \Gamma_0$.
Its boundary $\del \Omega_\rs$ consists of the flat interface $\Gamma_0$ and the remaining part
$\Gamma_\rs \coloneqq \del \Omega_\rs \setminus \overline{\Gamma_0}$.
The assumed regularity of the boundaries of the reference domains is required in \autoref{sec:analysis of the lin probl} for the analysis of the Stokes operator, the damped plate equation, and the Lam\'e operator.
A schematic illustration of the geometry in the corresponding two-dimensional setting, including both the reference configuration and the moving-domain configuration, is provided in \autoref{fig:geometry}.

\begin{figure}[htbp]
    \centering
    
    \begin{subfigure}[b]{0.45\textwidth}
        \centering
        \begin{tikzpicture}[scale=0.85]
            \fill[solidgreen!15] (-3,0) .. controls (-4,0) and (-3,2.5) .. (0,2.5) 
                                       .. controls (3,2.5) and (4,0) .. (3,0) -- cycle;
            \fill[fluidblue!15] (-3,0) .. controls (-4,0) and (-3,-2.5) .. (0,-2.5) 
                                      .. controls (3,-2.5) and (4,0) .. (3,0) -- cycle;

            \draw[thick, black] (-3,0) -- (3,0);
            \node[below, black] at (-1.5, 0) {$\Gamma_0 = \omega \times \{0\}$};

            \draw[thick, solidgreen] (-3,0) .. controls (-4,0) and (-3,2.5) .. (0,2.5) 
                                           .. controls (3,2.5) and (4,0) .. (3,0);
            \draw[thick, fluidblue] (-3,0) .. controls (-4,0) and (-3,-2.5) .. (0,-2.5) 
                                          .. controls (3,-2.5) and (4,0) .. (3,0);

            \node[black] at (0, 1.3) {$\Omega_\rs$};
            \node[black] at (0, -1.3) {$\Omega_\rf$};
            
            \node[solidgreen, anchor=west] at (2.8, 1.8) {$\Gamma_\rs$};
            \node[fluidblue, anchor=west] at (2.8, -1.8) {$\Gamma_\rf$};

            \fill[black] (-3,0) circle (1.5pt);
            \fill[black] (3,0) circle (1.5pt);
        \end{tikzpicture}
        \caption{Reference configuration.}
    \end{subfigure}
    \hspace{1cm} 
    \begin{subfigure}[b]{0.45\textwidth}
        \centering
        \begin{tikzpicture}[scale=0.85]
            \fill[solidgreen!15] 
                (-3,0) .. controls (-2, 1.2) and (-1, -1.2) .. (0.5, -0.4) .. controls (1.5, 0.4) and (2, 0.8) .. (3,0)
                .. controls (4,0) and (3,2.5) .. (0,2.5) .. controls (-3,2.5) and (-4,0) .. cycle;
                
            \fill[fluidblue!15] 
                (-3,0) .. controls (-2, 1.2) and (-1, -1.2) .. (0.5, -0.4) .. controls (1.5, 0.4) and (2, 0.8) .. (3,0)
                .. controls (4,0) and (3,-2.5) .. (0,-2.5) .. controls (-3,-2.5) and (-4,0) .. cycle;

            \draw[thick, refgray, dashed] (-3,0) -- (3,0);
            \node[below, black] at (-2.2, 0) {$\Gamma_0$};

            \draw[thick, solidgreen] (-3,0) .. controls (-4,0) and (-3,2.5) .. (0,2.5) 
                                           .. controls (3,2.5) and (4,0) .. (3,0);
            \draw[thick, fluidblue] (-3,0) .. controls (-4,0) and (-3,-2.5) .. (0,-2.5) 
                                          .. controls (3,-2.5) and (4,0) .. (3,0);

            \draw[very thick, interfaceorange] 
                (-3,0) .. controls (-2, 1.2) and (-1, -1.2) .. (0.5, -0.4)
                       .. controls (1.5, 0.4) and (2, 0.8) .. (3,0);
            \node[interfaceorange] at (2.2, 0.8) {$\Gamma_\eta(t)$};

            \node[black] at (0, 1.8) {$\Omega_\rs(t)$};
            \node[black] at (0, -1.8) {$\Omega_\rf(t)$};
            
            \node[solidgreen, anchor=west] at (2.8, 1.8) {$\Gamma_\rs$};
            \node[fluidblue, anchor=west] at (2.8, -1.8) {$\Gamma_\rf$};

            \fill[black] (-3,0) circle (1.5pt);
            \fill[black] (3,0) circle (1.5pt);
        \end{tikzpicture}
        \caption{Moving domain problem.}
    \end{subfigure}

    \caption{Sketch of the geometry of the present multilayered FSI problem in the two-dimensional situation.
    The reference configuration with the fixed fluid and thick structure domains $\Omega_\rf$ and $\Omega_\rs$, respectively, and fixed interface $\Gamma_0 = \omega \times \{0\}$ is shown in~(A), while~(B) depicts the moving domain problem with time-dependent domains $\Omega_\rf(t)$ and~$\Omega_\rs(t)$, and with moving interface $\Gamma_\eta(t) = \{(s,\eta(t,s)) : s \in \omega\}$.
    The rigid parts of the fluid and the thick structure boundaries are denoted by $\Gamma_\rf$ and $\Gamma_\rs$, respectively.}
    \label{fig:geometry}
\end{figure}

We consider the three-dimensional viscous, incompressible Navier--Stokes
equations to model the fluid.
Due to the nonlinear fluid--structure interaction, the interface separating the fluid from the thick structural layer is, in general, time-dependent.
Consequently, the fluid domain $\Omega_\rf(t)$ evolves in time and depends on
the displacement of the plate. 
More precisely, for each $t \in (0,T)$, the fluid domain $\Omega_\rf(t)$ is the domain in $\R^3$ whose boundary is given by $\del \Omega_\rf(t) = \Gamma_\rf \cup \Gamma_\eta(t)$, and we may therefore write $\Omega_\rf(t) = \Omega_\rf(\eta(t))$.
For simplicity of notation, we shall usually write $\Omega_\rf(t)$ in the
sequel. Since we are interested in time-periodic solutions, for a fixed period
$T>0$ we study the multilayered fluid--structure interaction problem on the
time interval $(0,T)$.

Let $u^\cF$ and $\pi^\cF$ denote the fluid velocity and pressure on the moving
domain.
Moreover, let $f \colon (0,T) \times \Omega_\rf \to \R^3$ be a given forcing term,
and define $f_\eta \coloneqq f \circ X_\eta$, where $X_\eta \colon
(0,T) \times \Omega_\rf \to (0,T) \times \Omega_\rf(t)$ is a diffeomorphism
mapping the reference domain to the moving fluid domain.
Its precise
construction will be given in \autoref{sec:trafo to fixed dom}.
Then the fluid subproblem reads
\begin{equation}\label{eq:fluid eq}
    \del_t u^\cF + (u^\cF \cdot \nabla) u^\cF
    = \mdiv \sigma_\rf(u^\cF,\pi^\cF) + f_\eta,
    \qquad
    \mdiv u^\cF = 0,
    \quad \tin (0,T) \times \Omega_\rf(t),
\end{equation}
where
\begin{equation}\label{eq:Cauchy stress tensor}
    \sigma_\rf(u^\cF,\pi^\cF) = 2 \mu_\rf \D(u^\cF) - \pi^\cF \Id_3, \twith \D(u^\cF) \coloneqq \frac{1}{2}\bigl(\nabla u^\cF + (\nabla u^\cF)^\top\bigr),
\end{equation}
represents the Cauchy stress tensor for some fluid viscosity parameter $\mu_\rf > 0$.
For simplicity, and as it does not affect the analysis, we will assume that $\mu_\rf \equiv 1$ throughout this paper.

For the thin structure, we assume that its motion is governed by a damped plate equation posed on $\omega$.
More precisely, after rescaling with respect to the thickness and density of the
plate, the displacement $\eta$ satisfies a linear damped plate equation.
Here, $f_\rp$ denotes the force density acting on the structure and will be
specified in the sequel, while
$g \colon (0,T) \times \omega \to \R$ represents an external forcing term.
The plate dynamics are then described by
\begin{equation}\label{eq:plate eq}
    \del_{tt} \eta + \Delta_s^2 \eta - \del_t \Delta_s \eta
    = f_\rp + g,
    \qquad \tin (0,T) \times \omega.
\end{equation}

Moreover, we assume that the deformed plate does not come into contact with the
rigid part of the fluid boundary, that is,
$\Gamma_\rf \cap \Gamma_{\eta}(t) = \emptyset
 \text{ for all } t \in (0,T)$. This non-degeneracy condition will be ensured by the smallness of the plate
displacement.

Finally, for the thick layer, we invoke the equations of linear (visco-)elasticity.
In this case, $d$ denotes the structural displacement of the thick (visco-)elastic layer, and for Lam\'e coefficients $\mu_\rs$, $\lambda_\rs > 0$ and a viscosity parameter $\delta > 0$, the Piola-Kirchhoff stress tensor takes the shape
\begin{equation}\label{eq:Piola-Kirchhoff stress tensor}
    \sigma_\rs(d, \del_t d) = \mu_\rs\bigl((\nabla d + (\nabla d)^\top) + \delta(\nabla \del_t d + (\nabla \del_t d)^\top)\bigr) + \lambda_\rs (\mdiv d + \delta \mdiv \del_t d) \Id_3.
\end{equation}
Note that the viscoelasticity $\delta > 0$ is needed to render the subproblem of the thick structural layer parabolic, see \autoref{ssec:thick structure} for more details.
In the following, in order to ensure strong ellipticity of the associated Lam\'e operator, we assume that
\begin{equation}\label{eq:Lame coeffs}
    \mu_\rs > 0 \tand \mu_\rs + \lambda_\rs > 0.
\end{equation}
The structural problem is already defined in the Lagrangian framework, so it is considered on the fixed domain $\Omega_\rs$, and this domain does not depend on time or the displacement of the plate.
For a suitable forcing term $h \colon (0,T) \times \Omega_\rs \to \R^3$, the equations of linear (visco-)elasticity read as
\begin{equation}\label{eq:lin viscoel eq}
    \del_{tt} d = \mdiv \sigma_\rs(d,\del_t d) + h, \tin (0,T) \times \Omega_\rs.
\end{equation}

The interaction problem is complemented by coupling and boundary conditions.
The coupling conditions consist of kinematic and dynamic ones.
Concerning the former, we assume that there is no slip between the fluid and the thin structure or between the thin and the thick structural layer.
For the unit vector~$\mre_3 = (0,0,1)^\top$, the continuity of velocity reads as
\begin{equation}\label{eq:cont of velocities}
    u^\cF(t,s,\eta(t,s)) = \del_t \eta(t,s) \mre_3, \tfor t \in (0,T), \enspace s \in \omega.
\end{equation}
For $x = (s,0) \in \Gamma_0$, the continuity of displacement between the thick and the thin structure translates as
\begin{equation}\label{eq:cont of displacement}
    d(t,s,0) = \eta(t,s) \mre_3, \tfor t \in (0,T), \enspace s \in \omega.
\end{equation}

The dynamic coupling condition expresses the balance of normal forces acting on
the plate at the fluid--structure interface $\Gamma_{\eta}(t)$.
It enters through the forcing term $f_\rp$ in \eqref{eq:plate eq}, which accounts
for the forces exerted on the plate by the fluid and by the thick structural
layer. Let $n_{\Gamma_{\eta}(t)}$ denote the outer unit normal to the fluid domain along
$\Gamma_{\eta}(t)$, given by
\begin{equation*}
    n_{\Gamma_{\eta}(t)}
    = \frac{1}{\sqrt{1 + |\nabla_s \eta|^2}}
      \binom{-\nabla_s \eta}{1}.
\end{equation*}
With the Cauchy stress tensor $\sigma_\rf$ defined in
\eqref{eq:Cauchy stress tensor} and the Piola--Kirchhoff stress tensor
$\sigma_\rs$ defined in~\eqref{eq:Piola-Kirchhoff stress tensor}, the dynamic
coupling condition can be written as
\begin{equation}\label{eq:dynamic coupling cond}
    f_\rp
    = - \sqrt{1 + |\nabla_s \eta|^2}
      \left.\left(\sigma_\rf(u^\cF,\pi^\cF)
      n_{\Gamma_{\eta}(t)}\right)\right|_{\Gamma_{\eta}(t)} \cdot \mre_3
      + \left.(\sigma_\rs(d,\del_t d)\mre_3)\right|_{\Gamma_0} \cdot \mre_3.
\end{equation}

We now prescribe the boundary conditions for the fluid velocity $u^\cF$ at the rigid part~$\Gamma_\rf$ of the fluid boundary, for the plate at the boundary $\del \omega$ of its domain $\omega$ and for the displacement~$d$ of the thick structure at the non-interface part $\Gamma_\rs$ of the boundary of the thick layer.
Here $u^\cF$ and $d$ satisfy homogeneous Dirichlet boundary conditions, while the plate is clamped, i.e., for the unit exterior normal vector $n_\omega$ to $\del \omega$, we have 
\begin{equation}\label{eq:boundary conditions}
    \begin{aligned}
        u^\cF(t,x) 
        &= 0, &&\tfor t \in (0,T), \enspace x \in \Gamma_\rf,\\
        \eta(t,s) 
        &= \nabla_s \eta(t,s) \cdot n_{\omega} = 0, &&\tfor t \in (0,T), \enspace s \in \del \omega, \tand\\
        d(t,x) 
        &= 0, &&\tfor t \in (0,T), \enspace x \in \Gamma_\rs,
    \end{aligned}
\end{equation}
Finally, the time-periodicity condition reads as
\begin{equation}\label{eq:per data}
    \begin{aligned}
        u^\cF(0,\cdot) 
        &= u^\cF(T,\cdot), \tin \Omega_\rf(\eta(0)), \enspace \eta(0,\cdot) = \eta(T,\cdot) \tand \del_t \eta(0,\cdot) = \del_t \eta(T,\cdot), \tin \omega,\\
        d(0,\cdot) 
        &= d(T,\cdot) \tand \del_t d(0,\cdot) = \del_t d(T,\cdot), \tin \Omega_\rs.
    \end{aligned}
\end{equation}

At this stage, let us also remark that we take into account the volume constraint 
\begin{equation}\label{eq:volume constraint displacement}
    \int_{\omega} \eta \srd s = 0.
\end{equation}
Thus, we recall the space $\rL_\rm^q(\omega)$ of functions with average zero in $\rL^q(\omega)$. 
In addition, we invoke the orthogonal projection $P_\rm \colon \rL^q(\omega) \to \rL_\rm^q(\omega)$ onto the space of functions with average zero.
It is given by
\begin{equation}\label{eq:proj P_m}
    P_\rm f \coloneqq f - \frac{1}{|\omega|} \int_\omega f \srd s, \tfor f \in \rL^q(\omega).
\end{equation}
We then apply this projection to the plate equation \eqref{eq:plate eq}, leading to the equations
\begin{equation}\label{eq:projected plate eq}
    \begin{aligned}
        \del_{tt} \eta + P_\rm \Delta_s^2 \eta - \Delta_s \del_t \eta 
        &= -P_\rm \left(\sqrt{1 + |\nabla_s \eta|^2} \left.\left(\sigma_\rf(u^\cF,\pi^\cF) n_{\Gamma_{\eta}(t)}\right) \right|_{\Gamma_{\eta}(t)} \cdot \mre_3\right)\\
        &\quad + P_\rm\left(\left.(\sigma_\rs(d,\del_t d) \mre_3)\right|_{\Gamma_0} \cdot \mre_3\right) + P_\rm g
    \end{aligned}
\end{equation}
and
\begin{equation}\label{eq:average part plate eq}
    \begin{aligned}
        \int_\omega \pi^\cF(t,s,\eta(t,s)) \srd s
        &= \int_\omega \Delta_s^2 \eta(t,s) \srd s + \int_\omega \sqrt{1 + |\nabla_s \eta|^2} \left.\left(2 \D(u^\cF) n_{\Gamma_{\eta}(t)}\right) \right|_{\Gamma_{\eta}(t)} \cdot \mre_3 \srd s\\
        &\quad - \int_\omega \left.(\sigma_\rs(d,\del_t d) \mre_3)\right|_{\Gamma_0} \cdot \mre_3 \srd s - \int_\omega g \srd s.
    \end{aligned}
\end{equation}
This allows us to determine the pressure precisely, not only up to a constant.
As a consequence, we only keep \eqref{eq:projected plate eq} together with the volume constraint \eqref{eq:volume constraint displacement}, so we solve the pressure up to a constant.
Finally, we employ \eqref{eq:average part plate eq} to determine the constant in the pressure.

We now summarize the multilayered FSI problem consisting of the respective fluid, plate and linearly (visco-)elastic subproblem \eqref{eq:fluid eq}, \eqref{eq:plate eq} and \eqref{eq:lin viscoel eq} as well as the coupling conditions \eqref{eq:cont of velocities}--\eqref{eq:dynamic coupling cond}, the boundary conditions \eqref{eq:boundary conditions} and the time-periodicity conditions \eqref{eq:per data}.
The resulting problem, with \eqref{eq:projected plate eq} and~\eqref{eq:volume constraint displacement} replacing \eqref{eq:plate eq}, and with $\cJ_\eta \coloneqq \sqrt{1 + |\nabla_s \eta|^2}$, reads as
\begin{equation}\label{eq:multilayered fsi}
    \left\{
    \begin{aligned}
        \del_t u^\cF + (u^\cF \cdot \nabla) u^\cF
        &= \mdiv \sigma_\rf(u^\cF,\pi^\cF) + f_\eta, \enspace \mdiv u^\cF = 0, &&\tin (0,T) \times \Omega_\rf(t),\\
        \del_{tt} \eta + P_\rm \Delta_s^2 \eta - \del_t \Delta_s \eta
        &= -P_\rm\left(\cJ_\eta \left.\left(\sigma_\rf(u^\cF,\pi^\cF) n_{\Gamma_{\eta}(t)}\right) \right|_{\Gamma_{\eta}(t)} \cdot \mre_3\right)\\
        &\qquad + P_\rm \left(\left.(\sigma_\rs(d,\del_t d) \mre_3)\right|_{\Gamma_0} \cdot \mre_3\right) + P_\rm g, &&\tin (0,T) \times \omega,\\
        \int_\omega \eta(t,s) \srd s 
        &= 0, &&\tfor t \in (0,T),\\
        \del_{tt} d 
        &= \mdiv \sigma_\rs(d,\del_t d) + h, &&\tin (0,T) \times \Omega_\rs,\\
        u^\cF(t,s,\eta(t,s)) 
        &= \del_t \eta(t,s) \mre_3, &&\tfor t \in (0,T), \enspace s \in \omega,\\
        d(t,s,0) 
        &= \eta(t,s) \mre_3, &&\tfor t \in (0,T), \enspace s \in \omega,\\
        u^\cF(t,x) 
        &= 0, &&\tfor t \in (0,T), \enspace x \in \Gamma_\rf,\\
        d(t,x) 
        &= 0, &&\tfor t \in (0,T), \enspace x \in \Gamma_\rs,\\
        \eta = \nabla_s \eta \cdot n_{\omega}
        &= 0, &&\ton (0,T) \times \del \omega,\\
        u^\cF(0,\cdot) 
        &= u^\cF(T,\cdot), &&\tin \Omega_\rf(\eta(0)),\\
        \eta(0,\cdot) 
        &= \eta(T,\cdot) \tand \del_t \eta(0,\cdot) = \del_t \eta(T,\cdot), &&\tin \omega,\\
        d(0,\cdot) 
        &= d(T,\cdot) \tand \del_t d(0,\cdot) = \del_t d(T,\cdot), &&\tin \Omega_\rs.
    \end{aligned}
    \right.
\end{equation}

Before stating the corresponding theorem, we require some preparation in terms of notation.
More precisely, let $\eta \in \rL^p(0,T;\rW^{4,q}(\omega)) \cap \rW^{2,p}(0,T;\rL^q(\omega))$ such that $\eta = 0$ and $\nabla_s \eta \cdot n_\omega = 0$ is valid on $\del \omega$, and we have $\Gamma_\rf \cap \Gamma_{\eta(t)} = \emptyset$ for all $t \in [0,T]$.
In \autoref{sec:trafo to fixed dom}, the existence of a map $X = X_\eta$ being a $\rC^1$-diffeomorphism from~$\Omega_\rf$ onto $\Omega_\rf(t)$ and satisfying $X \in \rL^p\bigl(0,T;\rW^{2,q}(\Omega_\rf)^3\bigr) \cap \rW^{2,p}\bigl(0,T;\rL^q(\Omega_\rf)^3\bigr)$ is shown.
Let us recall the definition of function spaces on moving domains from \autoref{sec:not and function spaces}.
In order to formulate the main result in a more concise way, we introduce the following space.
For $v^\cF = (u^{\cF},\pi^{\cF},\eta,\del_t \eta,d,\del_t d)$, we define
\begin{equation*}
    \begin{aligned}
        \E_1^* 
        \coloneqq \bigl\{v^\cF : \, &u^{\cF} \in \rW^{1,p}(0,T;\rL^q(\Omega_\rf(\cdot))^3) \cap \rL^p(0,T;\rW^{2,q}(\Omega_\rf(\cdot))^3), \, \pi^{\cF} \in \rL^p(0,T;\rW^{1,q}(\Omega_\rf(\cdot)) \cap \rL_\rm^q(\Omega_\rf(\cdot))),\\
        &\eta \in \rW^{1,p}(0,T;\rW^{2,q}(\omega)) \cap \rL^p(0,T;\rW^{4,q}(\omega)), \enspace \del_t \eta \in \rW^{1,p}(0,T;\rL^q(\omega)) \cap \rL^p(0,T;\rW^{2,q}(\omega)),\\
        &d \in \rW^{1,p}(0,T;\rW^{1,q}(\Omega_\rs)^3), \enspace \del_t d \in \rW^{1,p}(0,T;\rL^q(\Omega_\rs)^3) \cap \rL^p(0,T;\rW^{1,q}(\Omega_\rs)^3) \tand\\ 
        &d + \delta \del_t d \in \rL^p(0,T;\rW^{2,q}(\Omega_\rs)^3)\bigr\}.
    \end{aligned}
\end{equation*}
Before stating the main result, we make precise the notion of a strong $T$-periodic solution to the nonlinearly coupled multilayered fluid-structure interaction problem \eqref{eq:multilayered fsi}.

\begin{defn}\label{def:strong sol}
We say that $v^\cF = (u^{\cF},\pi^{\cF},\eta,\del_t \eta,d,\del_t d)$ is a strong $T$-periodic solution to \eqref{eq:multilayered fsi} if 
\begin{enumerate}[(a)]
    \item it satisfies $v^\cF \in \E_1^*$, 
    \item the fluid equation \eqref{eq:multilayered fsi}$_1$, thin structure equation \eqref{eq:multilayered fsi}$_{2-3}$ and thick structure equation \eqref{eq:multilayered fsi}$_5$ are satisfied in a strong $\rL^p$-sense, 
    \item the volume constraint \eqref{eq:multilayered fsi}$_4$ is fulfilled,
    \item the kinematic coupling conditions \eqref{eq:multilayered fsi}$_{6-7}$ and boundary conditions \eqref{eq:multilayered fsi}$_{8-10}$ hold in a trace sense, and
    \item the periodicity conditions in \eqref{eq:multilayered fsi}$_{11-13}$ are satisfied.
\end{enumerate}
\end{defn}

The main result of this paper below asserts the existence of a unique time-periodic strong solution to the multilayered fluid-structure interaction problem as made precise in \eqref{eq:multilayered fsi} provided the time-periodic forcing terms are sufficiently small.
The condition on $p$ and $q$ is required to establish suitable nonlinear estimates by means of Sobolev embeddings, see \autoref{ssec:nonlin ests}.

\begin{thm}\label{thm:ex of strong per sol}
Let $p$, $q \in (1,\infty)$ be such that $\frac{1}{p} + \frac{3}{2q} < \frac{3}{2}$, and assume that $f \in \rL^p(0,T;\rL^q(\Omega_\rf)^3)$, $g \in \rL^p(0,T;\rL^q(\omega))$ and $h \in \rL^p(0,T;\rL^q(\Omega_\rs)^3)$.

Then there is $R_1 > 0$ so that for every $R \in (0,R_1)$, there exists $\eps = \eps(R) > 0$ such that if
\begin{equation*}
    \| f \|_{\rL^p(0,T;\rL^q(\Omega_\rf))} + \| g \|_{\rL^p(0,T;\rL^q(\omega))} + \| h \|_{\rL^p(0,T;\rL^q(\Omega_\rs))} < \eps,
\end{equation*}
then the multilayered FSI problem \eqref{eq:multilayered fsi} has a unique strong solution $(u^{\cF},\pi^{\cF},\eta,\del_t \eta,d,\del_t d) \in \oB_{\E_1^*}(0,R)$ in the sense of \autoref{def:strong sol}, where $\oB_{\E_1^*}(0,R)$ is the closed ball in $\E_1^*$ with center zero and radius $R$.
Besides, $\Gamma_\rf \cap \Gamma_\eta(t) = \emptyset$ for all~$t \in [0,T]$, i.e., the geometry does not degenerate, and $(u^{\cF},\pi^{\cF},\eta,\del_t \eta,d,\del_t d)$ satisfies
\begin{equation*}
    \begin{aligned}
        u^{\cF} 
        &\in \rC([0,T];\rB_{qp}^{2-\nicefrac{2}{p}}(\Omega_\rf(\cdot))^3), \enspace \eta \in \rC([0,T];\rB_{qp}^{4-\nicefrac{2}{p}}(\omega)), \enspace \del_t \eta \in \rC([0,T];\rB_{qp}^{2-\nicefrac{2}{p}}(\omega)),\\
        d 
        &\in \rC([0,T];\rW^{1,q}(\Omega_\rs)^3), \enspace \del_t d \in \rC([0,T];\rB_{qp}^{1-\nicefrac{1}{p}}(\Omega_\rs)^3) \tand d + \delta \del_t d \in \rC([0,T];\rB_{qp}^{2-\nicefrac{2}{p}}(\Omega_\rs)^3).
    \end{aligned}
\end{equation*}
\end{thm}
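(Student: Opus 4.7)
The plan is to reduce the moving-domain problem \eqref{eq:multilayered fsi} to a fixed-domain formulation and then close the argument via a Banach fixed point on a small ball in $\E_1^*$. First, given any admissible displacement $\eta$ in the target ball, I would use the diffeomorphism $X_\eta \colon (0,T) \times \Omega_\rf \to (0,T) \times \Omega_\rf(t)$ from \autoref{sec:trafo to fixed dom} to pull back the fluid subsystem onto the cylindrical domain $(0,T)\times\Omega_\rf$. This produces a transformed system whose principal part is the linearized, decoupled damped-wave/Stokes/Lam\'e triple, and whose remainder gathers all nonlinear and lower-order-perturbative terms (transport, metric corrections from $X_\eta$, and the coupling nonlinearities $\cJ_\eta$, $n_{\Gamma_{\eta}(t)}$, $(u^\cF\cdot\nabla)u^\cF$, etc.).

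Second, I would solve the linearized time-periodic problem by the Arendt--Bu theorem, exactly along the lines outlined in the introduction. Concretely, I would use the lifting procedure of \autoref{lem:sol stat Stokes probl with inhom bc} and \autoref{lem:solvability stat Lame probl} to absorb the kinematic coupling conditions \eqref{eq:cont of velocities} and \eqref{eq:cont of displacement} into a homogeneous boundary setting, rewrite the resulting problem in operator form using the added-mass reformulation as in \autoref{prop:equiv reform lin probl}, and then invoke \autoref{prop:cR-sect and max reg up to shift} and \autoref{prop:spectral theory multilayered fs op} to conclude that the multilayered fluid--structure operator $A_\mfs$ is invertible and enjoys maximal $\rL^p$-regularity. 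Combined with the Arendt--Bu characterization (\autoref{prop:Arendt-Bu thm}) this yields a bounded solution operator $\mathcal{L}^{-1}$ from the space of $T$-periodic data into $\E_1^*$ for the full linearized time-periodic system. This step is entirely provided by \autoref{thm:max per reg multilayered fs op}.

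Third, I define the fixed-point map $\Phi \colon \oB_{\E_1^*}(0,R) \to \E_1^*$ by $\Phi(v) \coloneqq \mathcal{L}^{-1}\bigl(\mathcal{N}(v) + (f_\eta, P_\rm g, h)\bigr)$, where $\mathcal{N}(v)$ collects all the nonlinear terms obtained from the change of variables, the convective term, the nonlinear pressure/stress boundary contributions in \eqref{eq:projected plate eq}, and the quadratic/higher-order coupling terms. The core analytical step is then to show, using the nonlinear estimates to be developed in \autoref{ssec:nonlin ests}, that there exist $R_1>0$ and, for every $R\in(0,R_1)$, some $\eps=\eps(R)>0$ such that whenever the forcing data is bounded by $\eps$ one has
\begin{equation*}
\|\Phi(v)\|_{\E_1^*} \le R \tand \|\Phi(v_1)-\Phi(v_2)\|_{\E_1^*} \le \tfrac{1}{2}\|v_1-v_2\|_{\E_1^*}
\end{equation*}
for all $v, v_1, v_2 \in \oB_{\E_1^*}(0,R)$. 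The smallness of $R$ also ensures, via the time-continuous embedding $\eta \in \rC([0,T];\rB_{qp}^{4-\nicefrac{2}{p}}(\omega))$ and the smoothness of $\omega$, that $\Gamma_0 \cap \Gamma_{\eta(t)} = \emptyset$ for every $t\in[0,T]$, so the geometry does not degenerate. Banach's fixed point theorem then produces the unique $v \in \oB_{\E_1^*}(0,R)$ with $\Phi(v)=v$, which is the desired strong time-periodic solution; pulling back by $X_\eta^{-1}$ gives the solution on the original moving domain, and the continuity-in-time statements follow from the standard trace/embedding identifications of the maximal regularity spaces.

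The main obstacle, and the step that genuinely requires the condition $\tfrac{1}{p}+\tfrac{3}{2q}<\tfrac{3}{2}$, is the derivation of the quadratic and contractive estimates for $\mathcal{N}$ in the periodic rather than initial-value setting. Unlike in the Cauchy problem one cannot exploit a small-time interval to gain smallness; smallness must come entirely from smallness of the norms themselves. Consequently the $X_\eta$-dependent coefficients must be controlled purely through Sobolev--H\"older multiplier estimates on $(0,T)$, and the convective term $(u^\cF\cdot\nabla)u^\cF$, the nonlinear boundary stress $\cJ_\eta\,\sigma_\rf\, n_{\Gamma_\eta}|_{\Gamma_\eta}\cdot\mre_3$, and the pressure-trace term must each be shown to map $\oB_{\E_1^*}(0,R)$ into a ball of radius $C R^2$ in the data space for the linearized problem, with an analogous Lipschitz estimate of modulus $C R$ for differences. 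Once these nonlinear bounds are in place the fixed-point step is routine, but organizing the embeddings so that every quadratic term lands in $\rL^p(0,T;\rL^q)$ with a genuinely quadratic constant is the heart of the proof.
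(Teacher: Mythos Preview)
Your proposal is correct and follows essentially the same route as the paper: transform to the fixed domain via $X_\eta$, invoke the maximal periodic $\rL^p$-regularity of the linearized operator (via Arendt--Bu, the lifting procedure, the added-mass reformulation, $\cR$-sectoriality, and the spectral analysis), and close by a Banach fixed point using quadratic nonlinear estimates that rely on $\tfrac{1}{p}+\tfrac{3}{2q}<\tfrac{3}{2}$. One minor point of presentation: the paper runs the contraction argument in the fixed-domain space $\E_1$ rather than in $\E_1^*$, proving \autoref{thm:strong time per sol in ref config} first and then deducing \autoref{thm:ex of strong per sol} by composing with $X_\eta$; since $\E_1^*$ is by definition the image of $\E_1$ under composition with $X_\eta^{-1}$, your formulation is equivalent, but working in $\E_1$ is cleaner because the nonlinear terms $F$ and $G$ from \eqref{eq:RHS F}--\eqref{eq:RHS G} are naturally expressed there and the linear solution operator from \autoref{cor:max per reg result multilayered} acts on fixed-domain data.
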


At this stage, a remark on the role of the viscosity in the thick structural layer is in order.

\begin{rem}
As indicated above, the presence of viscoelastic damping in the structural model, i.e., assuming~$\delta > 0$ in \eqref{eq:Piola-Kirchhoff stress tensor}, is used to render this subproblem fully parabolic.
However, it seems that this assumption is also essential for establishing the existence of {\em time-periodic weak solutions}, see \cite[Rem.~1.17]{MR25}, where it is needed to obtain a diffusion estimate.
For more background on the occurrence of resonance and the role of damping in time-periodic problems involving elastic structures, we also refer to \cite{GMZZ:14}.
\end{rem}

\section{Transformation to a fixed domain}\label{sec:trafo to fixed dom}

In this section, we describe the procedure to transform the multilayered, nonlinearly coupled fluid-structure interaction problem \eqref{eq:multilayered fsi} with time-dependent fluid-domain $\Omega_\rf(t)$ to a problem on a fixed domain.

Without loss of generality, and following \cite{MT:21}, we may assume that there is $\alpha >0$ with $\omega \times (-\alpha,0) \subset \Omega_\rf$ and $\omega \times (0,\alpha) \subset \R^3 \setminus \Omega_\rf$.
Moreover, we fix a cut-off function $\psi\in \rC_{\mathrm{c}}^{\infty}\left(\mathbb{R}^{3};\left[0,1\right]\right)$ satisfying
\begin{equation*}
    \psi(z) =\left\{
    \begin{aligned}
        1, &\tfor z \in \left(-\frac{\alpha}{2},\frac{\alpha}{2}\right),\\
        0, &\tfor z \in \R \setminus (-\alpha,\alpha).
    \end{aligned}
    \right.
\end{equation*}
By extending $\eta$ from $\omega$ to $\R^{2}$ by zero, which leads to $\eta \in \rC_\mathrm{c}^1(\R^2)$, we may define a diffeomorphism 
\begin{equation}\label{eq:diffeo X}
    X(y_1,y_2,y_3) = X_\eta(y_1,y_2,y_3) \coloneqq \begin{pmatrix}
        y_1\\ y_2\\ y_3 + \psi(y_3) \eta(y_1,y_2)
    \end{pmatrix}. 
\end{equation}
In the following, it turns out to be advantageous to make a smallness assumption on the plate displacement~$\eta$.
In fact, for $\eta \in \rC^1(\overline{\omega})$ satisfying the clamped boundary conditions $\eta = 0$ and $\nabla_s \eta \cdot n_\omega = 0$, we suppose that there exists $\delta_0 > 0$ such that
\begin{equation}\label{eq:smallness ass displacement}
    \| \eta \|_{\rL^\infty(\omega)} \le \delta_0.
\end{equation}
We choose $\delta_0$ in \eqref{eq:smallness ass displacement} as
\begin{equation}\label{eq:concr bound for plate displacement}
    \delta_0 \coloneqq \frac{1}{2 \| \psi' \|_{\rL^\infty(\R)}}.
\end{equation}
Let $\eta = \eta(t)$ be a time-dependent plate displacement satisfying the
clamped boundary conditions
$\eta = 0$ and 
$\nabla_s \eta \cdot n_\omega = 0$ on $\del \omega$,
and assume that $\eta$ fulfills the smallness condition
\eqref{eq:smallness ass displacement} with this choice of $\delta_0$ for all
$t \in [0,T]$. If $\eta(t) \in \rC^1(\overline{\omega})$ for each $t \in [0,T]$,
we set $X(t,\cdot) \coloneqq X_{\eta(t)}$ and $X(t,\cdot)$ is a $\rC^1$-diffeomorphism from $\Omega_\rf$ onto
$\Omega_\rf(t)$ for all $t \in [0,T]$. Moreover, 
we observe that $\eta \in \rC([0,T];\rB_{qp}^{4-\nicefrac{2}{p}}(\omega))$. Under the assumptions of \autoref{thm:ex of strong per sol} on $p$ and $q$, it follows from Sobolev embeddings that $\eta \in \rC^1(\overline{\omega})$ for all $t \in [0,T]$. We refer to the proof of \autoref{lem:ests of plate and fluid vars} for details.

In addition, for~$t \in [0,T]$, we use $Y(t,\cdot) = X(t,\cdot)^{-1}$ to denote the inverse of $X(t,\cdot)$.
Note that $X \in \rC_\mathrm{b}^0([0,T];\rC^1(\overline{\Omega_\rf})^3)$.
Moreover, by elementary computations, for $t \in (0,T)$ and $y = (y_1,y_2,y_3) \in \omega \times (-\nicefrac{\alpha}{2},\nicefrac{\alpha}{2})$, we get
\begin{equation*}
    \det \nabla X(t,y) = 1 \tand \Cof(\nabla X)(t,y) = \begin{pmatrix}
        1 & 0 & -\del_{y_1} \eta(t,y_1,y_2)\\
        0 & 1 & -\del_{y_2} \eta(t,y_1,y_2)\\
        0 & 0 & 1
    \end{pmatrix}.
\end{equation*}
We may thus set 
\begin{equation*}
    u(t,y) \coloneqq \Cof \nabla X^\top(t,y) u^{\cF}(t,X(t,y)) \tand \pi(t,y) \coloneqq \Tilde{\pi}(t,X(t,y)), \tfor (t,y) \in (0,T) \times \Omega_\rf.
\end{equation*}
With this, also invoking the relation of $f_\eta = f \circ X_\eta$ and $f$, we get the system in reference configuration
\begin{equation}\label{eq:transformed syst}
\left\{
    \begin{aligned}
        \del_t u 
        &= \mdiv \sigma_\rf(u,\pi)+F(u,\pi,\eta) + f, \enspace \mdiv u = 0, &&\tin (0,T) \times \Omega_\rf,\\
        \del_{tt} \eta + P_\rm \Delta_s^2 \eta - \del_t \Delta_s \eta
        &= -P_\rm\left(\left.(\sigma_\rf(u,\pi) \mre_3) \right|_{\Gamma_0} \cdot \mre_3\right)\\
        &\qquad +P_\rm \left(G\left(u, \pi, \eta \right) \right)\\
        &\qquad + P_\rm \left(\left.(\sigma_\rs(d,\del_t d) \mre_3)\right|_{\Gamma_0} \cdot \mre_3\right) + P_\rm g, &&\tin (0,T) \times \omega,\\
        \int_\omega \eta(t,s) \srd s 
        &= 0, &&\tfor t \in (0,T),\\
        \del_{tt} d 
        &= \mdiv \sigma_\rs(d,\del_t d) + h, &&\tin (0,T) \times \Omega_\rs,\\
        u(t,s,0) 
        &= \del_t \eta(t,s) \mre_3, &&\tfor t \in (0,T), \enspace s \in \omega,\\
        d(t,s,0) 
        &= \eta(t,s) \mre_3, &&\tfor t \in (0,T), \enspace s \in \omega,\\
        u(t,x) 
        &= 0, &&\tfor t \in (0,T), \enspace x \in \Gamma_\rf,\\
        d(t,x) 
        &= 0, &&\tfor t \in (0,T), \enspace x \in \Gamma_\rs,\\
        \eta = \nabla_s \eta \cdot n_{\omega}
        &= 0, &&\ton (0,T) \times \del \omega,\\
        u(0,\cdot) 
        &= u(T,\cdot), &&\tin \Omega_\rf,\\
        \eta(0,\cdot) 
        &= \eta(T,\cdot) \tand \del_t \eta(0,\cdot) = \del_t \eta(T,\cdot), &&\tin \omega,\\
        d(0,\cdot) 
        &= d(T,\cdot) \tand \del_t d(0,\cdot) = \del_t d(T,\cdot), &&\tin \Omega_\rs.
    \end{aligned}
    \right.
\end{equation} 

In the sequel, we also write $a \coloneqq \Cof(\nabla Y)^\top$ and $b \coloneqq \Cof(\nabla X)^\top$, yielding that
\begin{equation*}
    u(t,y) = b(t,y) u^{\cF}(t,X(t,y)) \tand u^{\cF}(t,x) = a(t,x) u(t,Y(t,x)).
\end{equation*}
The transformed terms $F=\left(F_{\alpha}\right)_{1\le\alpha\le3}$ and $G$ in \eqref{eq:transformed syst} are then given by
\begin{equation}\label{eq:RHS F}
    \begin{aligned}
        F_{\alpha}\left(u,\pi,\eta\right)
        &= \sum_{i,j,k=1}^3 b_{\alpha i}\frac{\partial^{2}a_{ik}}{\partial x_{j}^{2}}\left(X\right)u_{k}+2\sum_{i,j,k,l=1}^3 b_{\alpha i}\frac{\partial a_{ik}}{\partial x_{j}}\left(X\right)\frac{\partial u_{k}}{\partial y_{l}}\frac{\partial Y_{l}}{\partial x_{j}}\left(X\right)\\
        &\quad  +\sum_{j,l,m=1}^3\frac{\partial^{2}u_{\alpha}}{\partial y_{l}\partial y_{m}}\left(\frac{\partial Y_{l}}{\partial x_{j}}\left(X\right)\frac{\partial Y_{m}}{\partial x_{j}}\left(X\right)-\delta_{l,j}\delta_{m,j}\right)+\sum_{j,l=1}^3 \frac{\partial u_{\alpha}}{\partial y_{l}}\frac{\partial^{2}Y_{l}}{\partial x_{j}^{2}}\left(X\right)\\
        &\quad -\sum_{i,k=1}^3 \frac{\partial\pi}{\partial y_{k}}\left(\det\left(\nabla X\right)\left(\frac{\partial Y_{\alpha}}{\partial x_{i}}\left(X\right)\frac{\partial Y_{k}}{\partial x_{i}}\left(X\right)-\delta_{\alpha,i}\delta_{k,i}\right)\right)\\
        &\quad -\sum_{i,j,k,m=1}^3 b_{\alpha i}\frac{\partial a_{ik}}{\partial x_{j}}\left(X\right)a_{jm}\left(X\right)u_{k}u_{m}-\frac{1}{\det\left(\nabla X\right)}\left[\left(u\cdot\nabla\right)u\right]_{\alpha}\\
        &\quad -\left[b\left(\partial_{t}a\right)\left(X\right)u\right]_{\alpha}-\left[\left(\nabla u\right)\left(\partial_{t}Y\right)\left(X\right)\right]_{\alpha}
    \end{aligned}
\end{equation}
and
\begin{equation}\label{eq:RHS G}
    \begin{aligned}
        G\left(u,\pi,\eta\right)
        &= \sum_{k=1}^{3}\left(\sum_{i=1}^{2}\partial_{s_{i}}\eta\left(\frac{\partial a_{ik}}{\partial x_{3}}\left(X\right)+\frac{\partial a_{3k}}{\partial x_{i}}\left(X\right)\right)-2\frac{\partial a_{3k}}{\partial x_{3}}\left(X\right)\right)u_{k}\\
        &\quad +\sum_{k=1}^{3}\biggl(\sum_{i=1}^{2}\partial_{s_{i}}\eta\left(a_{ik}\left(X\right)\frac{\partial Y_{l}}{\partial x_{3}}\left(X\right)+a_{3k}\left(X\right)\frac{\partial Y_{l}}{\partial x_{i}}\left(X\right)\right)\\
        &\quad -2\left(a_{3k}\left(X\right)\frac{\partial Y_{l}}{\partial x_{3}}\left(X\right)-\delta_{3,k}\delta_{3,l}\right)\frac{\partial u_{k}}{\partial y_{l}}\biggr).
\end{aligned}
\end{equation}

After providing the transformed system of equations in the reference configuration, we reformulate the existence result of a strong time-periodic solution in this configuration.
For this purpose, let us introduce some pieces of notation for the function spaces.
In fact, we set
\begin{equation*}
    \begin{aligned}
        \E_1^u 
        &\coloneqq \rW^{1,p}(0,T;\rL^q(\Omega_\rf)^3) \cap \rL^p(0,T;\rW^{2,q}(\Omega_\rf)^3), \enspace \E_1^\pi \coloneqq \rL^p(0,T;\rW_\rm^{1,q}(\Omega_\rf)),\\
        \E_1^{\eta_1}
        &\coloneqq \rW^{1,p}(0,T;\rW^{2,q}(\omega)) \cap \rL^p(0,T;\rW^{4,q}(\omega)) \cap \rL^p(0,T;\rL_\rm^q(\omega)),\\
        \E_1^{\eta_2} 
        &\coloneqq \rW^{1,p}(0,T;\rL^q(\omega)) \cap \rL^p(0,T;\rW^{2,q}(\omega)) \cap \rL^p(0,T;\rL_\rm^q(\omega)),\\
        \E_1^{d_1}
        &\coloneqq \rW^{1,p}(0,T;\rW^{1,q}(\Omega_\rs)^3) \tand \E_1^{d_2} \coloneqq \rW^{1,p}(0,T;\rL^q(\Omega_\rs)^3) \cap \rL^p(0,T;\rW^{1,q}(\Omega_\rs)^3).
    \end{aligned}
\end{equation*}
In the sequel, we will use $v$ to denote the principle variable, i.e., $v = (u,\pi,\eta_1,\eta_2,d_1,d_2)$.
We then define
\begin{equation}\label{eq:max reg space}
    \begin{aligned}
        \tE_1 
        &\coloneqq \bigl\{(u,\eta_1,\eta_2,d_1,d_2) \in \E_1^u \times \E_1^{\eta_1} \times \E_1^{\eta_2} \times \E_1^{d_1} \times \E_1^{d_2} : d_1 + \delta d_2 \in \rL^p(0,T;\rW^{2,q}(\Omega_\rs)^3)\bigr\} \tand\\
        \E_1 
        &\coloneqq \bigl\{v=(u,\pi,\eta_1,\eta_2,d_1,d_2) : (u,\eta_1,\eta_2,d_1,d_2) \in \tE_1 \tand \pi \in \E_1^\pi\bigr\}.
    \end{aligned}
\end{equation}
The main result stated in terms of the reference configuration is now given as follows.
Let us note that the difference with the spaces in \autoref{thm:ex of strong per sol} is that the the latter spaces concern the moving domain problem, i.e., they are defined by invoking the diffeomorphism $X$ from $\Omega_\rf$ to $\Omega_\rf(t)$.
Moreover, note that the definition of a strong solution to \eqref{eq:transformed syst} is analogous to \autoref{def:strong sol} in the context of the problem on the moving domain.

\begin{thm}\label{thm:strong time per sol in ref config}
Let $p$, $q \in (1,\infty)$ satisfy $\frac{1}{p} + \frac{3}{2q} < \frac{3}{2}$, and $f \in \rL^p(0,T;\rL^q(\Omega_\rf)^3)$, $g \in \rL^p(0,T;\rL^q(\omega))$ as well as $h \in \rL^p(0,T;\rL^q(\Omega_\rs)^3)$.

Then there exists $R_1 > 0$ such that for all $R \in (0,R_1)$, there is $\eps = \eps(R) > 0$ so that if
\begin{equation*}
    \| f \|_{\rL^p(0,T;\rL^q(\Omega_\rf))} + \| g \|_{\rL^p(0,T;\rL^q(\omega))} + \| h \|_{\rL^p(0,T;\rL^q(\Omega_\rs))} < \eps,
\end{equation*}
there exists a unique strong solution $ (u,\pi,\eta,\del_t \eta,d,\del_t d) \in \oB_{\E_1}(0,R)$, where $\E_1$ has been made precise in~\eqref{eq:max reg space}, to \eqref{eq:transformed syst}.
Besides, the displacement $\eta$ fulfills \eqref{eq:smallness ass displacement}, and $X(t,\cdot) \colon \Omega_\rf \to \Omega_\rf(t)$ is a $\rC^1$-diffeomorphism for all $t \in [0,T]$.
\end{thm}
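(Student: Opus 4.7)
The plan is to rewrite \eqref{eq:transformed syst} as a fixed-point equation $v = \Phi(v)$ on a closed ball in $\E_1$, where $\Phi$ is the solution operator of the linearized time-periodic problem obtained by freezing the nonlinearities $F$, $G$ from \eqref{eq:RHS F}--\eqref{eq:RHS G} at the input argument. Concretely, for $\tv=(\tilde u,\tilde\pi,\tilde\eta_1,\tilde\eta_2,\tilde d_1,\tilde d_2)\in\oB_{\E_1}(0,R)$ I would let $\Phi(\tv)$ be the unique solution of the linear multilayered FSI system with right-hand sides $F(\tilde u,\tilde\pi,\tilde\eta_1)+f$ in the fluid equation, $P_\rm G(\tilde u,\tilde\pi,\tilde\eta_1)+P_\rm g$ in the projected plate equation and $h$ in the elasticity equation. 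The existence of $\Phi(\tv)\in\E_1$ together with the linear bound
\begin{equation*}
\|\Phi(\tv)\|_{\E_1}\le C\bigl(\|F(\tv)\|_{\rL^p(0,T;\rL^q(\Omega_\rf))} + \|G(\tv)\|_{\rL^p(0,T;\rL^q(\omega))} + \|f\|_{\rL^p(\rL^q)} + \|g\|_{\rL^p(\rL^q)} + \|h\|_{\rL^p(\rL^q)}\bigr)
\end{equation*}
is supplied by the maximal periodic $\rL^p$-regularity of the multilayered fluid-structure operator $A_\mfs$ stated in \autoref{thm:max per reg multilayered fs op}.

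The main obstacle, and the step I expect to be most technical, is the derivation of quadratic nonlinear estimates for the terms in \eqref{eq:RHS F}--\eqref{eq:RHS G}. Each of these terms is either a product of two factors built out of $\tv$ (for instance $(\tilde u\cdot\nabla)\tilde u$, the contribution $b(\del_t a)(X_{\tilde\eta})\tilde u$ which carries the factor $\del_t\tilde\eta$, or the terms involving $\del_{s_i}\tilde\eta$ multiplying $\tilde u$ inside $G$), or a product of a derivative of $\tilde u$ or $\tilde\pi$ with a coefficient of the form $a(X_{\tilde\eta})-\Id$, $\nabla Y_{\tilde\eta}(X_{\tilde\eta})-\Id$ or $\det(\nabla X_{\tilde\eta})-1$, which is Lipschitz in $\tilde\eta$ and vanishes for $\tilde\eta=0$. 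Using the parabolic trace/embedding chain
\begin{equation*}
\E_1^u\hookrightarrow \rC([0,T];\rB_{qp}^{2-\nicefrac{2}{p}}(\Omega_\rf)^3)\quad\text{and}\quad \E_1^{\eta_1}\hookrightarrow \rC([0,T];\rB_{qp}^{4-\nicefrac{2}{p}}(\omega))\hookrightarrow \rC([0,T];\rC^3(\overline{\omega})),
\end{equation*}
together with product estimates in Sobolev-Besov scales where the assumption $\tfrac{1}{p}+\tfrac{3}{2q}<\tfrac{3}{2}$ is exactly what keeps the relevant pointwise products of first- and second-order derivatives controllable in $\rL^q$, I aim to establish bounds of the form
\begin{equation*}
\|F(\tv)\|_{\rL^p(\rL^q)}+\|G(\tv)\|_{\rL^p(\rL^q)}\le C(R)\|\tv\|_{\E_1}^2
\end{equation*}
and the analogous Lipschitz estimate with constant $C(R)\bigl(\|\tv\|_{\E_1}+\|\tv'\|_{\E_1}\bigr)$, with $C(R)$ remaining bounded as $R\to 0$. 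Smooth dependence of $X_{\tilde\eta}$, $Y_{\tilde\eta}$, $a$, $b$ and $\det\nabla X_{\tilde\eta}$ on $\tilde\eta$, and their time derivatives on $\del_t\tilde\eta$, is what produces the second factor of smallness in the difference estimates.

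Assembling the two ingredients yields $\|\Phi(\tv)\|_{\E_1}\le C\bigl(C(R)R^2+\eps\bigr)$ and a contraction constant on $\oB_{\E_1}(0,R)$ bounded by $C\,C(R)R$. I would first choose $R_1>0$ so small that $C\,C(R)R\le\tfrac{1}{2}$ for every $R\in(0,R_1)$, and then for each such $R$ take $\eps(R)\le R/(2C)$; $\Phi$ is then a self-map and a strict contraction on $\oB_{\E_1}(0,R)$, and Banach's fixed-point theorem delivers the unique solution $v\in\oB_{\E_1}(0,R)$ of \eqref{eq:transformed syst}. Finally, using the embedding $\E_1^{\eta_1}\hookrightarrow\rC([0,T]\times\overline{\omega})$ and, if necessary, shrinking $R_1$ further so that the resulting $\eta$ satisfies $\|\eta\|_{\rL^\infty(\omega)}\le\delta_0$ with $\delta_0$ as in \eqref{eq:concr bound for plate displacement}, the smallness condition \eqref{eq:smallness ass displacement} holds uniformly in $t\in[0,T]$, and consequently $X(t,\cdot)\colon\Omega_\rf\to\Omega_\rf(t)$ is a $\rC^1$-diffeomorphism for every $t$. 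This closes the fixed-point argument and establishes \autoref{thm:strong time per sol in ref config}.
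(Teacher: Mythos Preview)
Your overall architecture is exactly the paper's: define $\Phi$ via the linear time-periodic problem with frozen nonlinearities $F(\tv)+f$, $P_\rm G(\tv)+P_\rm g$, $h$, invoke the maximal periodic regularity (more precisely, the paper uses \autoref{cor:max per reg result multilayered}, which is the version already adapted to the data spaces you need, rather than \autoref{thm:max per reg multilayered fs op} directly), obtain quadratic self-map and Lipschitz estimates for $F$ and $G$, and close by Banach's fixed-point theorem, shrinking $R$ at the end to force \eqref{eq:smallness ass displacement}.

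There is, however, one concrete mis-step in your sketch of the nonlinear estimates. You claim the embedding $\E_1^{\eta_1}\hookrightarrow \rC([0,T];\rB_{qp}^{4-\nicefrac{2}{p}}(\omega))\hookrightarrow \rC([0,T];\rC^3(\overline{\omega}))$. The second arrow requires $4-\tfrac{2}{p}-\tfrac{2}{q}>3$, i.e.\ $\tfrac{1}{p}+\tfrac{1}{q}<\tfrac{1}{2}$, which is strictly stronger than the hypothesis $\tfrac{1}{p}+\tfrac{3}{2q}<\tfrac{3}{2}$. Under the stated assumptions you only get $\rC^1(\overline{\omega})$ (this is exactly \autoref{lem:ests of plate and fluid vars}(a)), which is not enough to put terms like $\nabla_s^3\eta$ (appearing in $\tfrac{\partial^2 a_{ik}}{\partial x_j^2}(X)$) into $\rL^\infty$. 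The paper circumvents this by using the mixed derivative theorem (\autoref{lem:embs of the max reg space}(a)) to place the various factors in the H\"older-conjugate scales $\rL^{3p}(\rL^{3q})$ and $\rL^{\nicefrac{3p}{2}}(\rL^{\nicefrac{3q}{2}})$; see \autoref{lem:ests of plate and fluid vars}(b)--(e) and the proof of \autoref{prop:self-map ests of F & G}. This is where the condition $\tfrac{1}{p}+\tfrac{3}{2q}<\tfrac{3}{2}$ is genuinely sharp. Once you replace your $\rC^3$-based strategy by these space-time H\"older splittings, the rest of your argument goes through verbatim.
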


\section{Analysis of the linearized problem}\label{sec:analysis of the lin probl}

The present section is of central importance.
In fact, its aim is to establish the so-called {\em maximal periodic $\rL^p$-regularity} via the Arendt-Bu theorem \cite{AB:02}.
For a generator $A \colon \rX_1 \subset \rX_0 \to \rX_0$ of a $\rC_0$-semigroup on $\rX_0$, the latter result provides a characterization of the well-posedness of the inhomogeneous periodic abstract Cauchy problem
\begin{equation}\label{eq:gen per ACP}
    \left\{
    \begin{aligned}
        w'(t) - A w(t)
        &= f(t), \enspace \tfor  t \in (0,2\pi),\\
        w(0)
        &= w(2 \pi),
    \end{aligned}
    \right.
\end{equation}
in terms of the well-posedness of the associated {\em initial value problem} and a spectral condition on the $\rC_0$-semigroup.
More precisely, setting $\F \coloneqq \rL^p(0,2 \pi;\rX_0)$ and $\E \coloneqq \rW^{1,p}(0,2 \pi;\rX_0) \cap \rL^p(0,2\pi;\rX_1)$, we recall that maximal periodic $\rL^p$-regularity means that for every $f \in \F$, there exists a unique solution~$w \in \E$ to~\eqref{eq:gen per ACP}.
Furthermore, the closed graph theorem then implies the existence of a constant $C > 0$ such that
\begin{equation*}
    \| w \|_{\E} \le C \cdot \| f \|_{\F}.
\end{equation*}
The Arendt-Bu theorem is given as follows.

\begin{prop}[{\cite[Thm.~5.1]{AB:02}}]\label{prop:Arendt-Bu thm}
Consider a Banach space $\rX_0$, and let $A \colon \rX_1 \to \rX_0$ be the generator of a $\rC_0$-semigroup on $\rX_0$.
Then $A$ has maximal periodic $\rL^p$-regularity if and only if $A$ has maximal $\rL^p$-regularity, and the semigroup satisfies $1 \in \rho(\mre^{2 \pi A})$.
Maximal $\rL^p$-regularity means that for every $f \in \F$, there is a unique solution $w \in \E$ to the abstract Cauchy problem
\begin{equation*}
    \left\{
    \begin{aligned}
        w'(t) - A w(t)
        &= f(t), \enspace \tfor t \in (0,2\pi),\\
        w(0)
        &= 0.
    \end{aligned}
    \right.
\end{equation*}
\end{prop}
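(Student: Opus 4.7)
My plan is to connect the periodic problem \eqref{eq:gen per ACP} to the associated initial value problem through Duhamel's formula, with the spectral condition $1 \in \rho(\mre^{2\pi A})$ emerging as the invertibility of the period map. The starting point is the mild representation
\begin{equation*}
    w(t) = \mre^{tA} w(0) + \int_0^t \mre^{(t-s)A} f(s) \srd s, \quad t \in [0, 2\pi],
\end{equation*}
from which the periodicity condition $w(0) = w(2\pi)$ is equivalent to the algebraic identity
\begin{equation*}
    (I - \mre^{2\pi A}) w(0) = \int_0^{2\pi} \mre^{(2\pi - s)A} f(s) \srd s,
\end{equation*}
which serves as the pivot in both directions of the equivalence.

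For the sufficiency implication, given $f \in \F$, I would first invoke maximal $\rL^p$-regularity to obtain the unique $\tilde w \in \E$ solving the initial value problem with zero initial data, and then set $w_0 \coloneqq (I - \mre^{2\pi A})^{-1} \tilde w(2\pi)$ and $w(t) \coloneqq \mre^{tA} w_0 + \tilde w(t)$. Periodicity of $w$ is then immediate from the algebraic identity. Since the Duhamel part $\tilde w$ already lies in $\E$, the remaining task is to show that $t \mapsto \mre^{tA} w_0$ belongs to $\E$, which by the standard trace characterization of maximal-regularity solutions amounts to showing $w_0 \in (\rX_0, \rX_1)_{1 - \nicefrac{1}{p}, p}$. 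As $\tilde w(2\pi)$ lies in this trace space via the embedding $\E \hookrightarrow \rC([0, 2\pi]; (\rX_0, \rX_1)_{1 - \nicefrac{1}{p}, p})$, it suffices to observe that $(I - \mre^{2\pi A})^{-1}$ commutes with $A$ and hence restricts to a bounded operator on $\rX_1$; real interpolation then extends it boundedly to the trace space. Uniqueness is immediate from injectivity of $I - \mre^{2\pi A}$ combined with uniqueness of the initial value problem.

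For the necessity implication, let $\cS \colon \F \to \E$ denote the bounded solution operator provided by maximal periodic $\rL^p$-regularity. Maximal $\rL^p$-regularity of the initial value problem with zero initial data follows at once from the splitting $u(t) \coloneqq (\cS f)(t) - \mre^{tA}(\cS f)(0)$: the trace embedding places $(\cS f)(0)$ in the trace space, so $u \in \E$, $u(0) = 0$, and $u$ solves the initial value problem with forcing $f$. Injectivity of $I - \mre^{2\pi A}$ follows from uniqueness of the periodic solution for $f \equiv 0$ via the algebraic identity. The main obstacle, and the most delicate step of the proof, is surjectivity of $I - \mre^{2\pi A}$: I would first exhibit dense range by testing the algebraic identity against forcings of the form $f_n(s) = \phi_n(s) x$, with $\phi_n$ an approximate identity concentrated near $s = 2\pi$, so that the right-hand side converges to any prescribed $x \in \rX_0$; density is then upgraded to surjectivity via the a priori bound on $\cS$ combined with a closed-graph argument applied to $I - \mre^{2\pi A}$, using the injectivity already established. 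An equivalent route, which is the one followed in \cite{AB:02}, is to pass to Fourier series on $[0, 2\pi]$ and recast both notions of regularity as the $\cR$-boundedness of $\{k(ik - A)^{-1} : k \in \mathbb{Z}\}$ via operator-valued Marcinkiewicz multiplier theorems on UMD spaces.
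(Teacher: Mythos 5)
The paper offers no proof of this proposition --- it is quoted verbatim from \cite[Thm.~5.1]{AB:02} --- so your argument can only be measured against the original. Your variation-of-constants strategy built on the pivot identity $(I - \mre^{2\pi A})w(0) = \int_0^{2\pi}\mre^{(2\pi - s)A}f(s)\srd s$ is the standard and correct one, and the sufficiency direction is essentially complete: maximal $\rL^p$-regularity of the initial value problem implies that the semigroup is analytic (Dore), which is what licenses the trace-space characterization you invoke, and your interpolation argument for $(I - \mre^{2\pi A})^{-1}$ on the trace space is fine. The necessity direction, however, has two genuine gaps. First, the splitting $u(t) = (\cS f)(t) - \mre^{tA}(\cS f)(0)$ only lands in $\E$ if the orbit of a trace-space element has maximal regularity, and that requires analyticity of the semigroup --- a property you do not yet know at this stage, since it is precisely maximal regularity of the initial value problem that you are trying to \emph{derive}. (For the translation semigroup on $\rL^p(\R)$ the orbit of a Besov-regular initial datum is not in $\E$.) Arendt and Bu close this by first showing that maximal periodic regularity forces $i\mathbb{Z} \subset \rho(A)$ with $\sup_{k}\|k(ik - A)^{-1}\| < \infty$, whence analyticity; some such preliminary step is unavoidable.

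Second, your surjectivity argument for $I - \mre^{2\pi A}$ does not close. Dense range plus injectivity of a bounded operator does not yield surjectivity, and the proposed ``closed-graph argument combined with the a priori bound on $\cS$'' cannot supply the missing lower bound: the approximate identities $\phi_n$ blow up in $\rL^{p}$ for $p>1$, so the estimate $\|\cS f_n\|_{\E} \le C\|f_n\|_{\F}$ gives no uniform control of $w_n(0)$. The standard fix is an explicit choice of forcing: for $x \in \rX_0$ take $f(s) \coloneqq \frac{1}{2\pi}\mre^{sA}x \in \rC([0,2\pi];\rX_0) \subset \F$, so that $\int_0^{2\pi}\mre^{(2\pi-s)A}f(s)\srd s = \mre^{2\pi A}x$ and the pivot identity becomes $(I - \mre^{2\pi A})\bigl(w(0) + x\bigr) = x$, exhibiting a preimage of $x$ directly. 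With these two repairs your outline becomes a complete proof along the lines of the original.
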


A simple rescaling argument reveals that the Arendt-Bu theorem is also valid for general time intervals~$(0,T)$.
On the other hand, the spectral mapping theorem for generators of analytic semigroups, see, e.g., \cite[Cor.~IV.3.12]{EN:00}, and the fact that the maximal $\rL^p$-regularity already implies the generation of an analytic semigroup yield that the spectral condition $1 \in \rho(\mre^{2 \pi A})$ is equivalent with $0 \in \rho(A)$.

This section is dedicated to showing that the linearized problem that corresponds to the transformed problem \eqref{eq:transformed syst} lies within the scope of the Arendt-Bu theorem as recalled in \autoref{prop:Arendt-Bu thm}.
In fact, setting $\eta_1 \coloneqq \eta$, $\eta_2 \coloneqq \del_t \eta$, $d_1 \coloneqq d$ and $d_2 \coloneqq \del_t d$ in order to rewrite the linearized problem as a first order problem, we find that the linearized problem under consideration is given by
\begin{equation}\label{eq:lin per multilayered interaction probl}
    \left\{
    \begin{aligned}
        \del_t u - \mdiv \sigma_\rf(u,\pi)
        &= f, \enspace \mdiv u = 0, &&\tin (0,T) \times \Omega_\rf,\\
        \del_t \eta_1
        &= \eta_2, &&\tin (0,T) \times \omega,\\
        \del_t \eta_2 + P_\rm \Delta_s^2 \eta_1 - \Delta_s \eta_2
        &= -P_\rm \left(\left.\sigma_\rf(u,\pi) \right|_{\Gamma_0} \mre_3 \cdot \mre_3\right)\\
        &\qquad + P_\rm \left(\left.(\sigma_\rs(d_1,d_2) \mre_3)\right|_{\Gamma_0} \cdot \mre_3\right) + P_\rm g, &&\tin (0,T) \times \omega,\\
        \int_\omega \eta_1(t,s) \srd s 
        &= 0, &&\tfor t \in (0,T),\\
        \del_t d_1
        &= d_2, &&\tin (0,T) \times \Omega_\rs,\\
        \del_t d_2 - \mdiv \sigma_\rs(d_1,d_2)
        &= h, &&\tin (0,T) \times \Omega_\rs,\\
        u
        &= P_\rm(\eta_2) \mre_3, &&\ton (0,T) \times \Gamma_0,\\
        d_1
        &= P_\rm(\eta_1) \mre_3, &&\ton (0,T) \times \Gamma_0,\\
        u
        &= 0, &&\ton (0,T) \times \Gamma_\rf,\\
        d_1
        &= 0, &&\ton (0,T) \times \Gamma_\rs,\\
        \eta_1 = \nabla_s \eta_1 \cdot n_{\omega}
        &= 0, &&\ton (0,T) \times \del \omega,\\
        u(0,\cdot) 
        &= u(T,\cdot), &&\tin \Omega_\rf,\\
        \eta_1(0,\cdot) 
        &= \eta_1(T,\cdot) \tand \eta_2(0,\cdot) = \eta_2(T,\cdot), &&\tin \omega,\\
        d_1(0,\cdot) 
        &= d_1(T,\cdot) \tand d_2(0,\cdot) = d_2(T,\cdot), &&\tin \Omega_\rs.
    \end{aligned}
    \right.
\end{equation}

We remark that the term $P_\rm \left(\left.\sigma_\rf(u,\pi) \right|_{\Gamma_0} \mre_3 \cdot \mre_3\right)$ simplifies significantly due to the the divergence-free condition $\mdiv u = 0$ in $\Omega_\rf$ as well as the fact that $u$ vanishes in horizontal direction on $\Gamma_0$ with regard to the condition $u = P_\rm(\eta_2) \mre_3$.
This leads to $\left.(\nabla u + (\nabla u)^\top)\right|_{\Gamma_0} = 0$.
As a result, with the modified trace operator
\begin{equation}\label{eq:mod trace op}
    \gamma_\rm f \coloneqq P_\rm(\left.f \right|_{\Gamma_0}) = f(\cdot,0) - \frac{1}{|\omega|} \int_{\omega} f(s',0) \srd s', \tfor f \in \rW^{r,q}(\Omega_\rf), \enspace r > \nicefrac{1}{q},
\end{equation}
we obtain 
\begin{equation}\label{eq:more comp repr kinematic cc in plate eq}
    -P_\rm \left(\left.\sigma_\rf(u,\pi) \right|_{\Gamma_0} \mre_3 \cdot \mre_3\right) = \gamma_\rm \pi.
\end{equation}

In the following subsections, we shall be concerned with providing a result on the solvability of \eqref{eq:lin per multilayered interaction probl}.
Note that even though \eqref{eq:lin per multilayered interaction probl} is a {\em time-periodic problem}, we will mainly work with the {\em initial value problem}, since we will use the aforementioned Arendt-Bu theorem \autoref{prop:Arendt-Bu thm}.

\subsection{Preliminary considerations}\label{ssec:prelim cons}
\ 

This subsection is of preparatory character.
First, we recall some concepts that will be important in the sequel such as $\cR$-sectoriality or the boundedness of the $\Hinfty$-calculus.
Moreover, we recall properties of the Stokes operator, and we take into account the block operator matrix associated with the damped plate block.

We start by making precise what it means for a Banach space $\rX$ to be a so-called {\em UMD space}.
In fact, UMD represents {\em unconditional martingale differences}, and while it is a probabilistic concept, it can be characterized in terms of the Hilbert transform, namely, $\rX$ is a UMD space if and only if the Hilbert transform is bounded on $\rL^p(\R;\rX)$ for some $p \in (1,\infty)$.
Many classical function spaces such as $\rL^p$-spaces, Sobolev spaces $\rW^{s,p}$, Bessel potential spaces $\rH^{s,p}$ or Besov spaces $\rB_{pq}^s$ have this property provided $p$, $q \in (1,\infty)$, and it carries over to closed subspaces of these spaces.
We also refer to \cite[Sec.~III.4.4 and III.4.5]{Ama:95} for more background on this concept.

Next, we invoke the concept of $\cR$-sectoriality for an operator.
For this purpose, we first require the notion of $\cR$-boundedness.
For Banach spaces $\rX$, $\rY$, a family of operators $\cT(\rX,\rY)$ from $\rX$ to $\rY$ is said to be {\em $\cR$-bounded} if there exist $C > 0$ and $p \in [1,\infty)$ such that for all $N \in \N$, $T_j \in \cT$, $x_j \in \rX$, and for every independent, symmetric, $\{-1,1\}$-valued random variable $\eps_j$ on a probability space $(\Omega,\mathcal{A},\mu)$, it holds that
\begin{equation}\label{eq:ineq R-bddness}
    \left\| \sum_{j=1}^N \eps_j T_j x_j \right\|_{\rL^p(\Omega;\rY)} \le C \cdot \left\| \sum_{j=1}^N \eps_j x_j \right\|_{\rL^p(\Omega;\rX)}.
\end{equation}
The smallest $C > 0$ such that \eqref{eq:ineq R-bddness} is valid is then called the {\em $\cR$-bound} of $\cT$, and it is denoted by $\cR(\cT)$.
Let us recall that a closed linear operator $A \colon \rD(A) \subset \rX \to \rX$ is sectorial, denoted by $A \in \cS(\rX)$, if
\begin{enumerate}[(a)]
    \item $\overline{\rD(A)} = \overline{\mathrm{R}(A)} = \rX$, i.e., the operator is densely defined and has dense range, 
    \item $(-\infty,0) \subset \rho(A)$, and
    \item there is $M \in (0,\infty)$ with $\| t(t+A)^{-1} \|_{\cL(\rX)} \le M$ for all $t > 0$.
\end{enumerate}
For $\Sigma_\theta \coloneqq \{z \in \C \setminus \{0\} : |\arg z| < \theta\}$ denoting the sector of angle $\theta \in (0,\pi]$ in the complex plane, the spectral angle $\phi_A$ of a sectorial operator $A$ is then defined by
\begin{equation*}
    \phi_A \coloneqq \inf\left\{\phi : \rho(-A) \supset \Sigma_{\pi-\phi}, \enspace \sup_{\lambda \in \Sigma_{\pi-\phi}} \| \lambda (\lambda + A)^{-1} \| < \infty\right\}.
\end{equation*}
Note that if $A$ is a sectorial operator on a Banach space $\rX$ with spectral angle $\phi_A < \nicefrac{\pi}{2}$, then $-A$ generates a bounded analytic semigroup of angle $\nicefrac{\pi}{2} - \phi_A$.
A sectorial operator $A$ on a Banach space $\rX$ is called {\em $\cR$-sectorial} if 
\begin{equation*}
    \cR_A(0) \coloneqq \cR\left\{t(t+A)^{-1} : t > 0\right\} < \infty,
\end{equation*}
and the $\cR$-angle of an $\cR$-sectorial operator $A$ is defined by
\begin{equation*}
    \phi_A^\cR \coloneqq \inf\{\theta \in (0,\pi) : \cR_A(\pi-\theta) < \infty\}, \twhere \cR_A(\theta) \coloneqq \cR\left\{\lambda(\lambda+A)^{-1} : |\arg \lambda| \le \theta\right\}.
\end{equation*}
In what follows, we will denote the class of $\cR$-sectorial operators on a Banach space $\rX$ by $\cR \cS(\rX)$.

The importance of the concept of $\cR$-sectoriality in the context of maximal $\rL^p$-regularity becomes apparent in the following result on the characterization of maximal $\rL^p$-regularity on UMD spaces in terms of $\cR$-sectoriality.
This result is due to Weis \cite[Thm.~4.2]{Wei:01}, see also \cite[Thm.~4.4]{DHP:03}.

\begin{lem}\label{lem:char max reg via R-sect}
Consider a UMD space $\rX$ as well as $p \in (1,\infty)$, and let $A$ be a sectorial operator on $\rX$ with spectral angle $\phi_A < \nicefrac{\pi}{2}$.
Then $A$ has maximal $\rL^p$-regularity as made precise in \autoref{prop:Arendt-Bu thm} if and only if $A$ is $\cR$-sectorial with $\cR$-angle $\phi_A^\cR < \nicefrac{\pi}{2}$.
\end{lem}

Another important operator theoretic concept is the {\em boundedness of imaginary powers}.
In fact, we say that a sectorial operator $A$ on a Banach spaces has bounded imaginary powers, or $A \in \cBIP(\rX)$, if $A^{\mri s} \in \cL(\rX)$ holds for all $s \in \R$, and there is a constant $C > 0$ such that $\| A^{\mri s} \|_{\cL(\rX)} \le C$ for $|s| \le 1$.
The associated power angle $\theta_A$ is defined by $\theta_A \coloneqq \limsup_{|s| \to \infty} \frac{1}{|s|} \log |A^{\mri s}|$.
The boundedness of imaginary powers is closely related with fractional power spaces:
Consider $A \in \cS(\rX)$ and $\alpha \in (0,\infty)$.
Then the space $\rX_{A^\alpha}$ is defined by $\rX_{A^\alpha} \coloneqq (\rD(A^\alpha),\| \cdot \|_\alpha)$, where $\| x \|_\alpha \coloneqq \| x \| + \| A^\alpha x \|$.
For the following result, we refer, e.g., to \cite[Thm.~3.3.7]{PS:16}.

\begin{lem}\label{lem:frac power spaces}
For $A \in \cBIP(\rX)$, and with $[\cdot,\cdot]_\theta$ denoting the complex interpolation functor, we have $\rX_{A^\theta} \simeq [\rX,\rD(A)]_\theta$.
\end{lem}

The last concept to be introduced is the boundedness of the $\Hinfty$-calculus.
We say that a sectorial operator $A$ with spectral angle $\phi_A \in [0,\pi)$ and $\theta \in (\phi_A,\pi)$ admits a {bounded $\Hinfty$-calculus}, or $A \in \Hinfty(\rX)$, if there is a constant $C > 0$ such that
\begin{equation*}
    \| f(A) \|_{\cL(\rX)} \le C \cdot \| f \|_{\rH^\infty(\Sigma_\theta)} \tforall f \in \rH_0^\infty(\Sigma_\theta),
\end{equation*}
where $\| f \|_{\rH^\infty(\Sigma_\theta)} \coloneqq \sup\{|f(\lambda)| : |\arg \lambda | < \theta \}$, and $\rH_0^\infty(\Sigma_\theta)$ represents the bounded and holomorphic functions on $\Sigma_\theta$ such that there exist $C$, $\eps > 0$ so that 
\begin{equation*}
    |f(\lambda)| \le C \cdot \left|\frac{\lambda}{(1+\lambda)^2}\right|^\eps, \tfor \lambda \in \Sigma_\theta,
\end{equation*}
Moreover, $\phi_A^\infty$ defined by the infimum of such $\theta$ is referred to as the $\Hinfty$-angle of $A$.

The following inclusion sheds some light on the relations of the previous results.
We also refer to \cite[Sec.~4.4]{DHP:03} for more details.
In fact, we get
\begin{equation}\label{eq:rels op theoretic concepts}
    A \in \Hinfty(\rX) \subset \cBIP(\rX) \subset \cR \cS(\rX) \subset \cS(\rX), \twith \phi_A^\infty \ge \theta_A \ge \phi_A^\cR \ge \phi_A.
\end{equation}
With regard to \eqref{eq:rels op theoretic concepts} and \autoref{lem:char max reg via R-sect}, establishing the boundedness of the $\Hinfty$-calculus with an angle strictly less than $\nicefrac{\pi}{2}$ is especially sufficient to show the maximal $\rL^p$-regularity on a UMD space $\rX$.

We complete the first part of this preparatory subsection with the following result on the preservation of the $\cR$-sectoriality and the boundedness of the $\Hinfty$-calculus under similarity transforms.
This will be crucial for the decoupling argument used in \autoref{ssec:multilayered fluid-structure operator}.
For the assertion of~(a), we refer to \cite[Sec.~4.1]{DHP:03}, while the assertion of~(b) can be found in \cite[Prop.~2.11(vi)]{DHP:03}.

\begin{lem}\label{lem:sim trafos}
Consider Banach spaces $\rX$ and $\rY$ as well as $T \in \cL(\rX,\rY)$.
Let $A$ be a sectorial operator on $\rX$, and define $A_1 \coloneqq T A T^{-1}$.
\begin{enumerate}[(a)]
    \item It holds that $A \in \cR \cS(\rX)$ if and only if $A_1 \in \cR \cS(\rY)$ and $\phi_A^\cR = \phi_{A_1}^\cR$.
    \item We have $A \in \Hinfty(\rX)$ if and only if $A_1 \in \Hinfty(\rY)$ and $\phi_A^\infty = \phi_{A_1}^\infty$.
\end{enumerate}
\end{lem}

Let us proceed with the Stokes operator.
To this end, we introduce the space of solenoidal vector fields~$\rL_\sigma^q(\Omega_\rf)$ defined by $\rL_\sigma^q(\Omega_\rf) \coloneqq \overline{\{f \in \rC_\mathrm{c}^\infty(\Omega_\rf)^3 : \mdiv f = 0, \tin \Omega_\rf\}}^{\| \cdot \|_{\rL^q}}$.
Additionally invoking the gradient fields $\rG^q(\Omega_\rf) \coloneqq \{\nabla f : f \in \rL_\mathrm{loc}^q(\overline{\Omega_\rf}), \enspace \nabla f \in \rL^q(\Omega_\rf)^3\}$, we deduce from \cite[Thm.~1.4]{SS:92} that $\rL^q(\Omega_\rf)^3$ allows for the decomposition $\rL^q(\Omega_\rf)^3 = \rL_\sigma^q(\Omega_\rf) \oplus \rG^q(\Omega_\rf)$.
The resulting projection $\bP \colon \rL^q(\Omega_\rf)^3 \to \rL_\sigma^q(\Omega_\rf)$ is referred to as {\em Helmholtz projection}, and we define the Stokes operator $A_0$ with homogeneous Dirichlet boundary conditions by 
\begin{equation}\label{eq:Stokes op with hom bc}
    A_0 u \coloneqq \bP \Delta u, \tfor u \in \rD(A_0) = \rW^{2,q}(\Omega_\rf)^3 \cap \rW_0^{1,q}(\Omega_\rf)^3 \cap \rL_\sigma^q(\Omega_\rf).
\end{equation}

For the following result, we refer to \cite[Thm.~3]{NS:03}, and we observe that $\rL_\sigma^q(\Omega_\rf)$ is a UMD space.
Thus, the boundedness of the $\cH^\infty$-calculus especially implies the maximal $\rL^p$-regularity by \eqref{eq:rels op theoretic concepts} together with \autoref{lem:char max reg via R-sect}.

\begin{lem}
Consider $q \in (1,\infty)$.
For the Stokes operator $A_0$ as introduced in \eqref{eq:Stokes op with hom bc}, it holds that $-A_0 \in \cH^\infty(\rL_\sigma^q(\Omega_\rf))$ with angle $\phi_{-A_0}^\infty = 0$, and the spectral bound $s(A_0) \coloneqq \sup\{\Rep \lambda : \lambda \in \sigma(A_0)\}$ satisfies $s(A_0) < 0$.
In particular, $-A_0$ admits maximal $\rL^p$-regularity on $\rL_\sigma^q(\Omega_\rf)$.
\end{lem}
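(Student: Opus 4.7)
My plan is to verify the three assertions by leveraging the deep $\cH^\infty$-calculus result of \cite{NS:03} and deducing everything else from standard operator-theoretic consequences and a Poincar\'e-type energy estimate.

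For the bounded $\cH^\infty$-calculus with angle $\phi_{-A_0}^\infty = 0$, I would directly invoke \cite[Thm.~3]{NS:03}, which establishes precisely this statement for the Dirichlet Stokes operator on sufficiently smooth bounded domains; the only verification needed is that the geometric hypotheses on $\Omega_\rf$ in the present FSI setup (smooth $\omega$ and a rigid part $\Gamma_0$ with $\Gamma_0 \cap \Gamma = \emptyset$ in the reference configuration) fit the smoothness class assumed in \cite{NS:03}. The maximal $\rL^p$-regularity statement is then automatic: $\rL_\sigma^q(\Omega_\rf)$ is a closed subspace of the UMD space $\rL^q(\Omega_\rf)^3$ and hence itself UMD for $q\in(1,\infty)$, and on a UMD space a bounded $\cH^\infty$-calculus of angle strictly below $\pi/2$ yields $\cR$-sectoriality of the same angle (see, e.g., \cite[Sec.~4.4]{DHP:03}), from which Weis's theorem \cite{Wei:01} delivers maximal $\rL^p$-regularity.

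For the strict spectral bound $s(A_0) < 0$, I would exploit that $\Omega_\rf$ is bounded: the embedding $\rD(A_0) \hookrightarrow \rW^{2,q}(\Omega_\rf)^3 \cap \rL_\sigma^q(\Omega_\rf) \hookrightarrow \rL_\sigma^q(\Omega_\rf)$ is compact by Rellich-Kondrachov, so $A_0$ has compact resolvent and $\sigma(A_0)$ consists of isolated eigenvalues of finite algebraic multiplicity. At $q=2$, any eigenpair $(\lambda,u)$ with associated pressure $p$ satisfies $-\Delta u + \nabla p = -\lambda u$, $\mdiv u = 0$, and $u|_{\partial \Omega_\rf} = 0$; testing with $\overline{u}$, integrating by parts, and using $\mdiv u = 0$ together with the homogeneous boundary condition to eliminate the pressure term yields $\|\nabla u\|_{\rL^2}^2 = -\lambda \|u\|_{\rL^2}^2$. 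The Poincar\'e inequality on $\Omega_\rf$ then forces $\Rep \lambda \le -1/C_P < 0$. Passing to general $q\in(1,\infty)$ is routine thanks to the well-known $q$-independence of the Stokes spectrum on bounded smooth domains (eigenfunctions are smooth by elliptic regularity).

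The main obstacle I anticipate is essentially bibliographic rather than conceptual: confirming that the regularity of $\Omega_\rf$ assumed throughout the paper meets the smoothness threshold required in \cite[Thm.~3]{NS:03}, in particular along the curve where the flat interface $\Gamma$ meets the rigid part $\Gamma_0$. All other ingredients---the UMD property of $\rL_\sigma^q(\Omega_\rf)$, the transfer from $\cH^\infty$-calculus to maximal regularity, compactness of the resolvent, and the Poincar\'e-based $\rL^2$ eigenvalue estimate---are entirely standard and reduce to short verifications.
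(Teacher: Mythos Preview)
Your proposal is correct and follows essentially the same approach as the paper: the paper simply refers to \cite[Thm.~3]{NS:03} for the bounded $\cH^\infty$-calculus, notes the UMD property of $\rL_\sigma^q(\Omega_\rf)$, and invokes \cite[Sec.~4.4]{DHP:03} for the passage to maximal $\rL^p$-regularity. Your additional detail on the spectral bound via compact resolvent, the $\rL^2$-testing argument with Poincar\'e, and $q$-independence is a standard elaboration that the paper leaves implicit in its citation; the concern about domain regularity is a fair bibliographic point but not a divergence in method.
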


Next, we consider the damped plate part.
For this purpose, we define $\rW_0^{2,q}(\omega) \coloneqq \overline{\rC_\mathrm{c}^\infty(\omega)}^{\| \cdot \|_{\rW^{2,q}}}$ and set~$\rX_0^{\rp} \coloneqq \rW_0^{2,q}(\omega) \cap \rL_\rm^q(\omega) \times \rL_\rm^q(\omega)$.
We then define the operator matrix $A_\rp \colon \rD(A_\rp) \subset \rX_0^{\rp} \to \rX_0^{\rp}$ by
\begin{equation}\label{eq:op matrix damped plate}
    A_\rp \coloneqq \begin{pmatrix}
        0 & \Id\\
        -P_\rm \Delta_\rp^2 & \Delta_\rp
    \end{pmatrix}, \twhere \rD(A_\rp) = \left(\rW^{4,q}(\omega) \cap \rW_0^{2,q}(\omega) \cap \rL_\rm^q(\omega)\right) \times \left(\rW_0^{2,q}(\omega) \cap \rL_\rm^q(\omega)\right).
\end{equation}
Here $\Delta_\rp^2$ and $\Delta_\rp$ represent the $\rL^q(\omega)$-realization of the in-plane bi-Laplacian $\Delta_s^2$ and Laplacian $\Delta_s$ as introduced in \autoref{sec:intro} and subject to the clamped boundary conditions.
The result below can be found in \cite[Thm.~4.2]{MT:21}, see also \cite[Thm.~5.1]{DS:15} for the underlying solvability of the damped plate problem on the spaces without spatial average zero.

\begin{lem}
Let $q \in (1,\infty)$, and recall $A_\rp$ from \eqref{eq:op matrix damped plate}.
Then there is $\mu \ge 0$ such that $-A_\rp + \mu$ is $\cR$-sectorial on $\rX_0^\rp$ with angle $\phi_{-A_\rp + \mu}^\cR < \nicefrac{\pi}{2}$.
In particular, $-A_\rp + \mu$ has maximal $\rL^p$-regularity on $\rX_0^\rp$.
\end{lem}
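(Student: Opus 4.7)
The plan is to reduce the $\cR$-sectoriality of the damped plate operator matrix $A_\rp$ to an estimate for a scalar-valued second-order operator pencil on $\rL_\rm^q(\omega)$, and then to exploit the bounded $\cH^\infty$-calculus of the clamped bi-Laplacian, following the scheme of \cite[Thm.~5.1]{DS:15} and \cite[Thm.~4.2]{MT:21}. Writing the resolvent equation $(\lambda + \mu - A_\rp)(\eta_1,\eta_2)^\top = (f_1,f_2)^\top$ in components and eliminating $\eta_2 = (\lambda + \mu)\eta_1 - f_1$, the problem reduces to the pencil equation
\begin{equation*}
L_\mu(\lambda) \eta_1 \coloneqq \bigl((\lambda+\mu)^2 + P_\rm \Delta_\rp^2 - (\lambda+\mu)\Delta_\rp\bigr) \eta_1 = f_2 + \bigl((\lambda+\mu) - \Delta_\rp\bigr) f_1.
\end{equation*}
Proving the lemma then amounts to verifying that the three families $\{(\lambda+\mu)^2 L_\mu(\lambda)^{-1}\}$, $\{-(\lambda+\mu) \Delta_\rp L_\mu(\lambda)^{-1}\}$ and $\{P_\rm \Delta_\rp^2 L_\mu(\lambda)^{-1}\}$ are $\cR$-bounded uniformly in $\lambda$ in a sector of angle strictly greater than $\pi/2$.

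To factor $L_\mu(\lambda)$ at the operator level, I would set $D \coloneqq -\Delta_\rp$ with Dirichlet boundary conditions and $B \coloneqq P_\rm \Delta_\rp^2$ with clamped boundary conditions on $\rL_\rm^q(\omega)$. Both operators admit a bounded $\cH^\infty$-calculus with angle zero on their natural domains and are in particular $\cR$-sectorial. The scalar symbol $\zeta^2 + d\zeta + d^2$ factors as $(\zeta + \alpha d)(\zeta + \bar\alpha d)$ with $\alpha \coloneqq \tfrac12(1 - \mri\sqrt{3}) = \mre^{-\mri\pi/3}$, so that $\alpha D$ has spectrum on the ray of argument $-\pi/3$ and is sectorial of angle strictly less than $\pi/2$. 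Interpreting $B^{1/2}$ as a substitute for $D$, the Kalton--Weis product theorem then yields an operator-level factorization
\begin{equation*}
L_\mu(\lambda) = (\lambda + \mu + \alpha B^{1/2})(\lambda + \mu + \bar\alpha B^{1/2}) + R_\mu(\lambda),
\end{equation*}
where the remainder $R_\mu(\lambda)$ is of lower order. Inverting each first-order factor and applying the composition rules for $\cR$-bounded families produces the required $\cR$-boundedness of $L_\mu(\lambda)^{-1}$ and its compositions listed above.

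The main obstacle is the identification between $B^{1/2}$ and $D$: since the clamped bi-Laplacian is not the square of the Dirichlet Laplacian, one must verify that $\rD(B^{1/2})$ coincides with $\rW_0^{2,q}(\omega) \cap \rL_\rm^q(\omega)$ up to equivalent norms, and that $\Delta_\rp$ is relatively bounded by $B^{1/2}$ with suitably small bound. This rests on boundary regularity results for the bi-Laplacian on the smooth domain $\omega$ and is exactly what \cite[Thm.~5.1]{DS:15} provides. Once this is in place, the restriction to $\rL_\rm^q(\omega)$ is routine: the subspace is invariant under $\Delta_\rp$ and $\Delta_\rp^2$, and the bounded projection $P_\rm$ commutes with both operators there. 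The shift $\mu \ge 0$ is chosen large enough to absorb the lower-order remainder $R_\mu$ and to place $\sigma(-A_\rp + \mu)$ strictly in the right half-plane, yielding $\cR$-sectoriality with angle strictly less than $\pi/2$; the maximal $\rL^p$-regularity conclusion is then immediate from the theorem of Weis.
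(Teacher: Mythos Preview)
The paper does not give its own proof of this lemma; it simply records the statement and cites \cite[Thm.~4.2]{MT:21} and, for the version on the full space without the mean-zero constraint, \cite[Thm.~5.1]{DS:15}. Your sketch is an accurate summary of precisely the strategy used in those references: reduction of the $2\times 2$ resolvent problem to the second-order operator pencil $L_\mu(\lambda)=(\lambda+\mu)^2 - (\lambda+\mu)\Delta_\rp + P_\rm\Delta_\rp^2$, factorisation of the scalar symbol $\zeta^2+\zeta d+d^2$ into first-order factors with arguments $\pm\pi/3$, and transfer to the operator level via the bounded $\cH^\infty$-calculus of the clamped bi-Laplacian. You correctly identify the one nontrivial point, namely that the square root of the clamped bi-Laplacian has domain $\rW_0^{2,q}(\omega)$ so that $\Delta_\rp$ is comparable to $B^{1/2}$; this is exactly what \cite{DS:15} supplies. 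The passage to the mean-zero subspace $\rL_\rm^q(\omega)$ is indeed routine, since $P_\rm$ is a bounded projection commuting with the operators involved. In short, your proposal is correct and coincides with the argument the paper invokes by citation.
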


\subsection{The thick structural layer}\label{ssec:thick structure}
\ 

In this subsection, we investigate the thick structural layer, i.e., we study the block that corresponds to $d_1$ and $d_2$.
More precisely, we establish the boundedness of the $\Hinfty$-calculus of the associated operator matrix and reveal that its spectral bound is negative.
To the best of our knowledge, the boundedness of the $\Hinfty$-calculus of this block operator matrix is new, and it is of independent interest.

First, we make precise the elasticity operator.
For $f \in \rW^{2,q}(\Omega_\rs)^3$, and $\mu_\rs$, $\lambda_\rs$ satisfying \eqref{eq:Lame coeffs}, we define
\begin{equation*}
    \cL f \coloneqq \mu_\rs \mdiv\bigl(\nabla f + (\nabla f)^\top\bigr) + \lambda_\rs \mdiv\bigl((\mdiv f) \Id_3\bigr).
\end{equation*}
Likewise, we introduce the elasticity operator $L_0$ subject to homogeneous boundary conditions.
Indeed, the operator $L_0$ is the $\rL^q(\Omega_\rs)^3$-realization of the differential operator $\cL$ subject to homogeneous Dirichlet boundary conditions, i.e., we set
\begin{equation}\label{eq:Lame op}
    L_0 f \coloneqq \mu_\rs \mdiv\bigl(\nabla f + (\nabla f)^\top\bigr) + \lambda_\rs \mdiv\bigl((\mdiv f) \Id_3\bigr), \tfor f \in \rD(L_0) = \rW^{2,q}(\Omega_\rs)^3 \cap \rW_0^{1,q}(\Omega_\rs)^3. 
\end{equation}
The result below on functional analytic and spectral properties of the Lam\'e operator from \eqref{eq:Lame op} is somewhat classical.
The assertion on the spectral bound is implied by the compactness of the resolvent of $L_0$ together with a simple testing argument of the eigenvalue problem in $\rL^2$, since the $q$-independent spectrum of $-L_0$ consists of discrete eigenvalues.
The second part of the assertion can be deduced from the strong normal ellipticity of the Lam\'e operator thanks to the assumptions on the Lam\'e coefficients, see also \cite[Sec.~6.2]{PS:16} and in particular \cite[Thm.~6.2.11]{PS:16}, which yields the required ellipticity and validity of the Lopatinskii-Shapiro condition so that the assertion follows from \cite[Thm.~2.3]{DDHPV:04}.

\begin{lem}\label{lem:props of the Lame op}
Let $q \in (1,\infty)$, and assume that $\mu_\rs$ and $\lambda_\rs$ satisfy \eqref{eq:Lame coeffs}.
Then it holds that $s(L_0) < 0$ and $-L_0 \in \Hinfty(\rL^q(\Omega_\rs)^3)$ with angle $\phi_{-L_0}^\infty < \nicefrac{\pi}{2}$.
\end{lem}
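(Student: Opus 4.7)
The plan is to prove the two assertions separately, starting with the spectral bound $s(L_0) < 0$ and then upgrading the known sectoriality to a bounded $\Hinfty$-calculus with angle strictly less than $\pi/2$.

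For the spectral bound, I would first invoke that $\Omega_\rs$ is a bounded smooth domain and that the Lam\'e operator is strongly elliptic under \eqref{eq:Lame coeffs}, so Sobolev embedding shows $L_0$ has compact resolvent on $\rL^q(\Omega_\rs)^3$. Consequently $\sigma(-L_0)$ consists only of isolated eigenvalues of finite algebraic multiplicity, and by standard resolvent consistency arguments (as in \cite{Ama:95}) this eigenvalue set is independent of $q \in (1,\infty)$. It therefore suffices to rule out non-negative eigenvalues in the Hilbert case $q = 2$. For an eigenfunction $f \in \rW^{2,2}(\Omega_\rs)^3 \cap \rW_0^{1,2}(\Omega_\rs)^3$ with $L_0 f = \lambda f$, integration by parts together with the Dirichlet boundary condition gives
\begin{equation*}
    \lambda \| f \|_{\rL^2}^2
    = \int_{\Omega_\rs} L_0 f \cdot \overline{f} \srd y
    = -\mu_\rs \| \nabla f \|_{\rL^2}^2 - (\mu_\rs + \lambda_\rs) \| \mdiv f \|_{\rL^2}^2,
\end{equation*}
where one uses the identity $\int (\nabla f)^\top : \nabla \overline{f} \srd y = \int (\mdiv f)(\overline{\mdiv f}) \srd y$ valid for $f$ with vanishing trace. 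Thanks to \eqref{eq:Lame coeffs}, both coefficients on the right-hand side are non-negative, and the Poincar\'e inequality on the bounded domain $\Omega_\rs$ with homogeneous Dirichlet data yields $\| \nabla f \|_{\rL^2}^2 \ge c_P \| f \|_{\rL^2}^2$ for some $c_P > 0$. Hence $\Rep \lambda \le -\mu_\rs c_P < 0$, which in turn gives $s(L_0) \le -\mu_\rs c_P < 0$.

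For the bounded $\Hinfty$-calculus part, the idea is to verify the structural hypotheses of the general theorem for elliptic boundary value problems in \cite[Thm.~2.3]{DDHPV:04}. The assumption \eqref{eq:Lame coeffs} is precisely the condition that ensures strong normal ellipticity of the Lam\'e symbol (one checks that the principal symbol $\mu_\rs |\xi|^2 \Id_3 + (\mu_\rs + \lambda_\rs) \xi \otimes \xi$ has spectrum bounded away from zero in a sector), and the Dirichlet boundary operator satisfies the Lopatinskii-Shapiro condition uniformly on the smooth boundary $\partial \Omega_\rs$; both verifications can be taken verbatim from \cite[Sec.~6.2]{PS:16}, in particular the Lam\'e-specific \cite[Thm.~6.2.11]{PS:16}. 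Once these two conditions are in place, \cite[Thm.~2.3]{DDHPV:04} yields that, after a possible positive shift, $-L_0$ admits a bounded $\Hinfty$-calculus on $\rL^q(\Omega_\rs)^3$ with $\Hinfty$-angle strictly less than $\pi/2$. Combining this shifted result with the strict negativity $s(L_0) < 0$ established in the previous step removes the need for the shift and gives $-L_0 \in \Hinfty(\rL^q(\Omega_\rs)^3)$ with $\phi_{-L_0}^\infty < \nicefrac{\pi}{2}$ as claimed.

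The only genuinely delicate point is the sign of the quadratic form in the first step when $\lambda_\rs$ is allowed to be negative; the cancellation that makes $-(\mu_\rs + \lambda_\rs) \| \mdiv f \|_{\rL^2}^2$ appear instead of $-\lambda_\rs \| \mdiv f \|_{\rL^2}^2$ relies crucially on the Dirichlet trace vanishing, and this is exactly where the assumption $\mu_\rs + \lambda_\rs > 0$ in \eqref{eq:Lame coeffs} is used. The second step is then purely a citation of well-documented results once ellipticity and Lopatinskii-Shapiro have been matched to the Lam\'e situation.
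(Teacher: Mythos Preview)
Your proof is correct and follows essentially the same route as the paper: compactness of the resolvent plus a $q$-independent $\rL^2$ testing argument for the spectral bound, and strong normal ellipticity together with the Lopatinskii--Shapiro condition (via \cite[Sec.~6.2 and Thm.~6.2.11]{PS:16}) to invoke \cite[Thm.~2.3]{DDHPV:04} for the bounded $\Hinfty$-calculus. You have simply made explicit the integration-by-parts computation and the role of the condition $\mu_\rs + \lambda_\rs > 0$ that the paper leaves as a ``simple testing argument''.
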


In the following, we shall tacitly assume that the Lam\'e coefficients $\mu_\rs$ and $\lambda_\rs$ satisfy \eqref{eq:Lame coeffs} are as made precise in \autoref{lem:props of the Lame op} without further notice. We now introduce an operator matrix in order to rewrite the thick structural layer problem as a first order problem on the space $\rX_0^\rs \coloneqq \rW_0^{1,q}(\Omega_\rs)^3 \times \rL^q(\Omega_\rs)^3$.
The difficulty in the analysis of the thick structural layer is that only the sum $d + \delta \del_t d$ enjoys $\rW^{2,q}$-regularity.

\begin{rem}\label{rem:naive op matrix thick structural layer}
At first sight, it may be tempting to consider the operator $E_\rs \colon \rD(E_\rs) \subset \rX_0^\rs \to \rX_0^\rs$ given by
\begin{equation*}
    E_\rs \binom{d_1}{d_2} = \binom{d_2}{L_0 d_1 + \delta L_0 d_2}, \tfor \binom{d_1}{d_2} \in \rD(E_\rs) = \rD(L_0) \times \rD(L_0).
\end{equation*}
However, note that the resulting operator is {\em not} closed, see \cite[Sec.~2.1]{CCD:08}.
\end{rem}

Instead of the operator matrix from \autoref{rem:naive op matrix thick structural layer}, we consider $A_\rs \colon \rD(A_\rs) \subset \rX_0^\rs \to \rX_0^\rs$ associated with the thick structural layer taking the shape
\begin{equation}\label{eq:op matrix thick structure}
    A_\rs \coloneq \begin{pmatrix}
        0 & \Id\\
        L_0 & \delta L_0
    \end{pmatrix}, \twith \rD(A_\rs) = \bigl\{(d_1,d_2) \in \rW_0^{1,q}(\Omega_\rs)^3 \times \rW_0^{1,q}(\Omega_\rs)^3 : d_1 + \delta d_2 \in \rD(L_0)\bigr\},
\end{equation}
and where the action of the operator matrix $A_\rs$ is defined by
\begin{equation*}
    A_\rs \binom{d_1}{d_2} = \binom{d_2}{L_0(d_1 + \delta d_2)}, \tfor \binom{d_1}{d_2} \in \rD(A_\rs).
\end{equation*}
By \autoref{lem:props of the Lame op} as well as \eqref{eq:rels op theoretic concepts}, we especially have $-L_0 \in \cBIP(\rL^q(\Omega_\rs)^3)$ with power angle $\theta_{-L_0} < \nicefrac{\pi}{2}$, so by \autoref{lem:frac power spaces}, we get
\begin{equation}\label{eq:frac power domain Lame}
    \rW_0^{1,q}(\Omega_\rs)^3 = \bigl[\rL^q(\Omega_\rs)^3,\rW^{2,q}(\Omega)^3 \cap \rW_0^{1,q}(\Omega_\rs)^3\bigr]_{\nicefrac{1}{2}} = \bigl[\rL^q(\Omega_\rs)^3,\rD(-L_0)\bigr]_{\nicefrac{1}{2}} = \rD((-L_0)^{\nicefrac{1}{2}}).
\end{equation}

Below, we discuss functional analytic as well as spectral properties of the operator matrix $A_\rs$.
Note that the assumption $\delta > 0$ is indispensable for obtaining the following result on the boundedness of the $\Hinfty$-calculus and thus also the maximal regularity of the operator matrix associated with the thick structural layer.

\begin{prop}\label{prop:bdd Hinfty & spectral bound thick structural layer}
Let $q \in (1,\infty)$, and consider $A_\rs$ as in \eqref{eq:op matrix thick structure}.
Then the following assertions are valid.
\begin{enumerate}[(a)]
    \item It holds that $s(A_\rs) < 0$.
    \item We have $-A_\rs \in \Hinfty(\rX_0^\rs)$ with angle $\phi_{-A_\rs}^\infty < \nicefrac{\pi}{2}$.
    \item The operator $-A_\rs$ is $\cR$-sectorial on $\rX_0^\rs$ with $\cR$-angle $\phi_{-A_\rs}^{\cR} < \nicefrac{\pi}{2}$.
    In particular, $-A_\rs$ has maximal $\rL^p$-regularity on $\rX_0^\rs$, and $A_\rs$ generates a bounded analytic semigroup $(\mre^{tA_\rs })_{t \ge 0}$ on $\rX_0^\rs$.
\end{enumerate}
\end{prop}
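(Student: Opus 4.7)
Setting $B := -L_0$, \autoref{lem:props of the Lame op} yields that $B$ enjoys bounded $\Hinfty(\rL^q(\Omega_\rs)^3)$-calculus of angle $\phi^\infty_B < \nicefrac{\pi}{2}$, with $\sigma(B) \subset [s_0,\infty)$ for some $s_0 > 0$. In view of \eqref{eq:frac power domain Lame}, the identification $\rW_0^{1,q}(\Omega_\rs)^3 = \rD(B^{\nicefrac{1}{2}})$ implies that
\[
\Phi \colon \rX_0^\rs \to Y := \rL^q(\Omega_\rs)^3 \times \rL^q(\Omega_\rs)^3, \quad \Phi(d_1, d_2) := (B^{\nicefrac{1}{2}} d_1, d_2),
\]
is an isomorphism, and a direct calculation reveals
\[
\tilde A := \Phi A_\rs \Phi^{-1} = \begin{pmatrix} 0 & C \\ -C & -\delta C^2 \end{pmatrix} \twith C := B^{\nicefrac{1}{2}}.
\]
By the composition rule for the $\Hinfty$-calculus, $C$ has bounded $\Hinfty$-calculus on $\rL^q(\Omega_\rs)^3$ of angle $<\nicefrac{\pi}{4}$. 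The plan is to prove (b) first for $-\tilde A$ on $Y$, and then to derive (a) and (c) as short consequences.

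The heart of the argument is to realize $-\tilde A$ as a matrix-valued function of the single sectorial operator $C$. For a scalar $c$ in a sector around $(0,\infty)$, the matrix $\tilde M(c) := \begin{pmatrix} 0 & -c \\ c & \delta c^2 \end{pmatrix}$ has eigenvalues $\nu_\pm(c) = \tfrac{1}{2}(\delta c^2 \pm \sqrt{\delta^2 c^4 - 4c^2})$, both lying in a sector $\Sigma_{\phi_0}$ of the open right half-plane for $c$ in a suitable conic neighborhood of $[\sqrt{s_0},\infty)$. For $\phi \in (\phi_0,\nicefrac{\pi}{2})$ and $\psi \in \Hinfty(\Sigma_\phi)$, I would introduce the matrix symbol
\[
\psi_{\mathrm{mat}}(c) := \frac{1}{\nu_+-\nu_-}\begin{pmatrix} \psi(\nu_-)\nu_+ - \psi(\nu_+)\nu_- & c(\psi(\nu_-)-\psi(\nu_+)) \\ c(\psi(\nu_+)-\psi(\nu_-)) & \psi(\nu_+)\nu_+ - \psi(\nu_-)\nu_- \end{pmatrix},
\]
and verify three points: (i) $\psi_{\mathrm{mat}}$ is invariant under the exchange $\nu_+\leftrightarrow \nu_-$, hence single-valued and holomorphic; (ii) the apparent singularity at the branch point $c=\nicefrac{2}{\delta}$ (where $\nu_+=\nu_-$) is removable since each entry is a divided difference of $\psi$; and (iii) the uniform bound $\|\psi_{\mathrm{mat}}(c)\|_{\C^{2\times 2}}\le K_\phi \|\psi\|_{\Hinfty(\Sigma_\phi)}$ holds via the Cauchy representation
\[
\psi[\nu_+,\nu_-] = \frac{1}{2\pi i}\int_{\del\Sigma_{\phi'}} \psi(z)(z-\nu_+)^{-1}(z-\nu_-)^{-1}\srd z,
\]
since $|\nu_+-\nu_-|\asymp |c|^2$ as $|c|\to\infty$ absorbs the off-diagonal factor $c$, while near $c=\nicefrac{2}{\delta}$ the mean-value bound $|\psi'(\nu_\pm)|\lesssim \|\psi\|_{\Hinfty}/\dist(\nu_\pm,\del \Sigma_\phi)$ controls the divided difference. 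Matching $\psi(-\tilde A) = \psi_{\mathrm{mat}}(C)$ on elementary rationals $\psi(z)=(\lambda-z)^{-1}$ from the explicit resolvent formula for $(\lambda + \tilde A)^{-1}$ (obtained by Gaussian elimination in the $2\times 2$ system), extending to general $\psi$ through the convergence lemma, and applying $C$'s $\Hinfty$-calculus entry-wise then yields $\|\psi(-\tilde A)\|_{Y\to Y}\lesssim \|\psi\|_{\Hinfty(\Sigma_\phi)}$, i.e.\ (b).

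For (a), the eigenvalue equation $A_\rs(d_1,d_2)^\top = \mu (d_1,d_2)^\top$ forces $d_2=\mu d_1$ and $\mu^2 d_1 = L_0(1+\delta \mu) d_1$, so $\mu\in\sigma(A_\rs)$ precisely when $\mu=\mu_\pm(s):=\tfrac12(-\delta s\pm \sqrt{\delta^2 s^2-4s})$ for some $s\in\sigma(B)$, possibly together with the limit point $-\nicefrac{1}{\delta}$; a short case analysis on the sign of $\delta^2 s-4$ then gives $\Rep\mu \le -\min(\nicefrac{1}{\delta},\nicefrac{\delta s_0}{2})<0$. For (c), $\rX_0^\rs$ is a UMD space (as a product of UMD spaces), so bounded $\Hinfty$ on UMD automatically upgrades to $\cR$-boundedness of the $\Hinfty$-calculus by Kalton-Weis; hence $-A_\rs$ is $\cR$-sectorial with $\phi^\cR_{-A_\rs}\le \phi^\infty_{-A_\rs}<\nicefrac{\pi}{2}$, and Weis's theorem then delivers maximal $\rL^p$-regularity and generation of a bounded analytic semigroup. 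The principal difficulty is step (b): because $\tilde A$ mixes $C$ and $C^2$ in a Kelvin-Voigt manner, a direct diagonalization on $Y$ via the functional calculus of $C$ fails (the matrix spectral projectors degenerate where $\delta^2 C^2=4$), so the scalar symbol $\psi_{\mathrm{mat}}$ must be engineered to absorb both the branch-point singularity and the unbounded growth of $c$ in the off-diagonal entries through the divided-difference cancellations described above.
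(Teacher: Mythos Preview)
Your approach is essentially correct but takes a genuinely different route from the paper. The paper applies a \emph{constant} similarity transform $S=\begin{pmatrix}\Id&0\\ \delta^{-1}&\Id\end{pmatrix}$ to the auxiliary operator $B_\rs:=A_\rs-\diag(0,\delta^{-1}\Id)$, obtaining the upper-triangular matrix $C_\rs=SB_\rs S^{-1}=\begin{pmatrix}-\delta^{-1}\Id&\Id\\0&\delta L_0\end{pmatrix}$ with \emph{diagonal} domain $\rW_0^{1,q}(\Omega_\rs)^3\times\rD(L_0)$. Its diagonal part has bounded $\Hinfty$-calculus by \autoref{lem:props of the Lame op}, the off-diagonal $\Id$ is bounded by the square root of the diagonal via \eqref{eq:frac power domain Lame}, and one invokes the lower-order perturbation result \cite[Prop.~13.1]{KW:04}; a final bounded perturbation recovers $A_\rs$ from $B_\rs$, and (a) is obtained by exhibiting $A_\rs^{-1}$ explicitly. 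This avoids any matrix symbol calculus and relies entirely on off-the-shelf perturbation theory.

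Your route---conjugating by the \emph{unbounded} isomorphism $\Phi=\diag(B^{1/2},\Id)$ and realising $\psi(-\tilde A)$ as $\psi_{\mathrm{mat}}(C)$ through the scalar calculus of $C=B^{1/2}$---is more explicit and delivers the spectrum in closed form, but you underplay one point that must be made precise. The entries of $\psi_{\mathrm{mat}}$ are \emph{not} bounded holomorphic on a full sector $\Sigma_\theta$: as $c\to0$ the eigenvalues $\nu_\pm(c)\sim\pm ic$ escape every sector of half-angle below $\nicefrac{\pi}{2}$, so $\psi(\nu_\pm(c))$ is undefined there. Your phrase ``conic neighbourhood of $[\sqrt{s_0},\infty)$'' signals awareness of this, but you must explicitly invoke $0\in\rho(C)$ to justify applying the $\Hinfty$-calculus of $C$ on a \emph{truncated} sector $\Sigma_\theta\cap\{|c|>r\}$ with $r<\sqrt{s_0}$; this is legitimate for invertible sectorial operators but is not the standard statement. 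Likewise, in (a) your claim ``$\mu\in\sigma(A_\rs)$ precisely when $\mu=\mu_\pm(s)$'' conflates point spectrum with full spectrum: the eigenvalue equation only yields $\sigma_\rp$, and the reverse inclusion $\sigma(A_\rs)\subset\overline{\{\mu_\pm(s)\}}$ requires you to check that the explicit resolvent symbol $(\mu^2+\delta\mu c^2+c^2)^{-1}\begin{pmatrix}\mu+\delta c^2&-c\\c&\mu\end{pmatrix}$ is bounded on the truncated sector for each $\mu$ outside that closure. With these two clarifications your argument goes through; you trade the paper's soft perturbation machinery for hands-on symbol estimates, gaining explicit spectral bounds at the cost of more computation.
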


\begin{proof}
The first assertion of~(c) immediately follows from~(b) upon recalling \eqref{eq:rels op theoretic concepts}.
The second assertion then is a consequence of the first one thanks to the UMD property of the space $\rX_0^\rs$ by $q \in (1,\infty)$ and the characterization of maximal $\rL^p$-regularity in terms of $\cR$-sectoriality.
The last part of the assertion of~(c) is implied by the former ones and~(a) in a standard way, see, e.g., \cite[Sec.~3.5]{PS:16}.

Hence, it remains to show~(a) and~(b).
In fact, we will first prove that $-A_\rs + \mu \in \Hinfty(\rX_0^\rs)$ for some~$\mu \ge 0$ and then investigate the spectral bound.
Conceptually, our proof is inspired by the considerations in \cite[Sec.~2]{CCD:08}.
However, let us emphasize that in \cite{CCD:08}, the authors are merely interested in the sectoriality, while we strive for the boundedness of the $\Hinfty$-calculus here.
At this stage, note that perturbation theory for the latter concept is significantly more involved than for sectoriality.

As an auxiliary tool, we introduce the operator matrix $B_\rs \colon \rD(B_\rs) \subset \rX_0^\rs \to \rX_0^\rs$, defined by
\begin{equation*}
    B_\rs \coloneqq A_\rs - \begin{pmatrix}
        0 & 0\\
        0 & \delta^{-1} \Id
    \end{pmatrix} = \begin{pmatrix}
        0 & \Id\\
        L_0 & \delta L_0 - \delta^{-1} \Id
    \end{pmatrix}, \twith \rD(B_\rs) = \rD(A_\rs).
\end{equation*}
Moreover, we invoke the isomorphism $S \colon \rX_0^\rs \to \rX_0^\rs$ as well as its inverse  given by
\begin{equation*}
    S = \begin{pmatrix}
        \Id & 0\\
        \delta^{-1} & \Id
    \end{pmatrix} \tand S^{-1} = \begin{pmatrix}
        \Id & 0\\
        -\delta^{-1} & \Id
    \end{pmatrix},
\end{equation*}
respectively.
We then introduce the operator matrix $C_\rs \colon \rD(C_\rs) \subset \rX_0^\rs \to \rX_0^\rs$ taking the shape
\begin{equation*}
    C_\rs = S B_\rs S^{-1} = \begin{pmatrix}
        -\delta^{-1} \Id & \Id\\
        0 & \delta L_0
    \end{pmatrix}, \twhere \rD(C_\rs) = S \rD(B_\rs) = \rW_0^{1,q}(\Omega_\rs)^3 \times \left(\rW^{2,q}(\Omega)^3 \cap \rW_0^{1,q}(\Omega_\rs)^3\right).
\end{equation*}
Next, we invoke the diagonal part $\Tilde{C}_\rs \colon \rD(\Tilde{C}_\rs) \subset \rX_0^\rs \to \rX_0^\rs$ of $C_\rs$, i.e.,
\begin{equation}
    \Tilde{C}_\rs \coloneqq \diag(-\delta^{-1} \Id,\delta L_0), \twith \rD(\Tilde{C}_\rs) = \rD(C_\rs).
\end{equation}
From \autoref{lem:props of the Lame op}, we immediately infer that $-\Tilde{C}_\rs \in \Hinfty(\rX_0^\rs)$ with $\phi_{-\Tilde{C}_\rs}^\infty < \nicefrac{\pi}{2}$.
The next goal is to transfer this property from $\Tilde{C}_\rs$ to $C_\rs$.
To this end, observe that $C_\rs = \Tilde{C}_\rs + \Tilde{P}_\rs$, where
\begin{equation*}
    \Tilde{P}_\rs = \begin{pmatrix}
        0 & \Id\\
        0 & 0
    \end{pmatrix}.
\end{equation*}
For this purpose, let $(d_1,d_2) \in \rD(\Tilde{C}_\rs)$.
Making use of \eqref{eq:frac power domain Lame} as well as $0 \in \rho(-L_0)$, which is implied by \autoref{lem:props of the Lame op}, together with the shape of $\Tilde{C}_\rs$, we find that
\begin{equation*}
    \left\| -\Tilde{P}_\rs \binom{d_1}{d_2} \right\|_{\rX_0^\rs} = \| d_2 \|_{\rW_0^{1,q}(\Omega_\rs)} = \left\| (-L_0)^{\nicefrac{1}{2}} d_2 \right\|_{\rL^q(\Omega_\rs)} \le C \cdot \left\| \Tilde{C}_\rs^{\nicefrac{1}{2}} \binom{d_1}{d_2} \right\|_{\rX_0^\rs}.
\end{equation*}
In other words, $-C_\rs$ is bounded with respect to a fractional power of $-\Tilde{C}_\rs$.
It then follows from \cite[Prop.~13.1]{KW:04} that there exists $\mu \ge 0$ such that $-C_\rs + \mu = -\Tilde{C}_\rs - \Tilde{P}_\rs + \mu \in \Hinfty(\rX_0^\rs)$ with $\phi_{-C_\rs + \mu}^\infty < \nicefrac{\pi}{2}$.
Observe that the choice $\mu = 0$ is possible, since $C_\rs$ is invertible.
Indeed, by $0 \in \rho(L_0)$, we have
\begin{equation*}
    C_\rs^{-1} = \begin{pmatrix}
        -\delta \Id & L_0^{-1}\\
        0 & \delta^{-1} L_0^{-1}
    \end{pmatrix},
\end{equation*}
revealing that $0 \in \rho(C_\rs)$, so $-C_\rs \in \Hinfty(\rX_0^\rs)$ with $\phi_{-C_\rs}^\infty < \nicefrac{\pi}{2}$.

Furthermore, as the $\Hinfty$-calculus is preserved under isomorphisms, see \autoref{lem:sim trafos}(b), we infer that $-B_\rs \in \Hinfty(\rX_0^\rs)$ with $\phi_{-B_\rs}^\infty < \nicefrac{\pi}{2}$.
Next, note that $A_\rs = B_\rs + P_\rs$, with $P_\rs = \diag(0,\delta^{-1} \Id)$.
Let now $(d_1,d_2) \in \rD(B_\rs) = \rD(A_\rs)$.
Then we find that
\begin{equation*}
    \left\| P_\rs \binom{d_1}{d_2} \right\|_{\rX_0^\rs} = \| \delta^{-1} d_2 \|_{\rL^q(\Omega_\rs)} \le C \cdot \left\| \binom{d_1}{d_2} \right\|_{\rX_0^\rs}.
\end{equation*}
In other words, $-A_\rs$ is a bounded perturbation of $-B_\rs$, so, a priori up to a shift, the boundedness of the $\Hinfty$-calculus with angle strictly less than $\nicefrac{\pi}{2}$ transfers to $-A_\rs$.

In order to handle the shift, and in order to show the assertion of~(a), we now verify that $0 \in \rho(A_\rs)$.
Again, by virtue of $0 \in \rho(L_0)$, we find that $A_\rs^{-1}$ admits the explicit representation
\begin{equation*}
    A_\rs^{-1} = \begin{pmatrix}
        -\delta & L_0^{-1}\\
        \Id & 0
    \end{pmatrix},
\end{equation*}
revealing the desired invertibility and thereby completing the proof.
\end{proof}

\subsection{Lifting arguments}\label{ssec:lifting args}
\ 

After recalling properties of the Stokes operator with homogeneous boundary conditions and the thin structural layer in \autoref{ssec:prelim cons}, and with the understanding of the thick structure gained in \autoref{ssec:thick structure}, we are now in the position to tackle the coupling conditions by means of lifting arguments.
Let us start with the lifting associated with the continuity of velocities at the interface, i.e., $u = P_\rm(\eta_2) \mre_3$ on $\Gamma_0$.
At this stage, we observe that the underlying lifting procedure has already been carried out in \cite[Sec.~4.3]{MT:21}.
However, for convenience, we recall the main steps here.

To this end, take into account the stationary Stokes problem with the corresponding boundary conditions.
Let us recall that $\del \Omega_\rf = \Gamma_0 \cup \Gamma_\rf$ is especially of class $\rC^3$, and that $\Gamma_0 = \{(s,0) : s \in \omega\}$.
We consider
\begin{equation}\label{eq:stat Stokes probl with inhom bc}
    \left\{
    \begin{aligned}
        -\mdiv \sigma_\rf(w,\psi)
        &= f, \enspace \mdiv w = 0, &&\tin \Omega_\rf,\\
        w
        &= b, &&\ton \Gamma_0,\\
        w
        &= 0, &&\ton \Gamma_\rf,\\
        \int_{\Omega_\rf} \psi \srd y
        &= 0.
    \end{aligned}
    \right.
\end{equation}

The following result is classical, see for example \cite[Prop.~2.3]{Tem:79}.

\begin{lem}\label{lem:sol stat Stokes probl with inhom bc}
Let $q \in (1,\infty)$, $f \in \rL^q(\Omega_\rf)^3$ and $b \in \rW_0^{2,q}(\omega)^3 \cap \rL_\rm^q(\omega)^3$.
Then there exists a unique solution~$(w,\psi) \in \rW^{2,q}(\Omega_\rf)^3 \times \rW^{1,q}(\Omega_\rf) \cap \rL_\rm^q(\Omega_\rf)$ to \eqref{eq:stat Stokes probl with inhom bc}.
\end{lem}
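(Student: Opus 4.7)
The plan is to reduce this inhomogeneous boundary value problem to the standard homogeneous Stokes problem via a lifting argument, and then to invoke the classical $\rL^q$-regularity theory for the Stokes system on the bounded smooth domain $\Omega_\rf$.

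First, I would construct a divergence-free extension $W \in \rW^{2,q}(\Omega_\rf)^3$ of the boundary datum, that is, a vector field satisfying $\mdiv W = 0$ in $\Omega_\rf$, $W|_{\del \Omega_\rf \setminus \Gamma_0} = b\mre_3$ and $W|_{\Gamma_0} = 0$. The compatibility condition for the existence of such a divergence-free lift, namely the vanishing of the total boundary flux $\int_{\del \Omega_\rf} W \cdot n \srd S$, reduces here precisely to $\int_\omega b \srd s = 0$, which is encoded in the hypothesis $b \in \rL_\rm^q(\omega)$. The $\rW^{2,q}$-regularity of $W$ is supplied by standard trace and extension theory on the smooth subdomain $\omega \times (-\alpha,0)$, combined with a Bogovskii-type correction to restore the divergence-free condition. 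The hypothesis $b \in \rW_0^{2,q}(\omega)$, ensuring that $b$ and $\nabla_s b \cdot n_\omega$ vanish on $\del \omega$, is exactly what makes the boundary data compatible with the homogeneous Dirichlet condition on $\Gamma_0$ after an appropriate cut-off.

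Next, setting $\tv \coloneqq w - W$, the original problem reduces to finding $(\tv,\psi)$ with
\begin{equation*}
    -\mdiv \sigma_\rf(\tv,\psi) = f + \Delta W \tin \Omega_\rf, \quad \mdiv \tv = 0 \tin \Omega_\rf, \quad \tv = 0 \ton \del \Omega_\rf, \quad \int_{\Omega_\rf} \psi \srd y = 0.
\end{equation*}
Since $W \in \rW^{2,q}(\Omega_\rf)^3$, the modified right-hand side $f + \Delta W$ lies in $\rL^q(\Omega_\rf)^3$, and the classical $\rL^q$-theory for the stationary Stokes system with homogeneous Dirichlet data on bounded smooth domains (see, e.\,g., \cite[Prop.~2.3]{Tem:79}) yields a unique solution $(\tv,\psi) \in (\rW^{2,q}(\Omega_\rf)^3 \cap \rW_0^{1,q}(\Omega_\rf)^3) \times (\rW^{1,q}(\Omega_\rf) \cap \rL_\rm^q(\Omega_\rf))$ together with the corresponding a priori estimate. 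Reassembling $w \coloneqq \tv + W$ and keeping $\psi$ produces the desired pair, while uniqueness follows directly from uniqueness in the homogeneous problem, the pressure normalization $\int_{\Omega_\rf} \psi \srd y = 0$ pinning down the otherwise free additive constant.

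The main obstacle is the careful construction of the lift $W$ with joint control of the two boundary components: one must ensure that the Bogovskii correction restoring $\mdiv W = 0$ can be localized in a neighborhood of $\del \Omega_\rf \setminus \Gamma_0$ that stays away from $\Gamma_0$, so as not to spoil the vanishing trace on $\Gamma_0$, and simultaneously that the full $\rW^{2,q}$-norm of $W$ is controlled by $\| b \|_{\rW^{2,q}(\omega)}$. All remaining ingredients are then routine consequences of classical stationary Stokes theory.
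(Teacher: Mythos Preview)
Your argument is correct and follows the standard route: lift the inhomogeneous boundary datum to a divergence-free $\rW^{2,q}$ vector field using the mean-zero condition on $b$, then apply the classical $\rL^q$-theory for the homogeneous Stokes problem. The paper itself does not give a proof but simply records the result as classical and refers to \cite[Prop.~2.3]{Tem:79}; your sketch is precisely the mechanism behind that reference, so there is nothing to add.
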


Thanks to \autoref{lem:sol stat Stokes probl with inhom bc}, we may introduce the lifting operators $D_\fl \in \cL(\rW_0^{2,q}(\omega)^3 \cap \rL_\rm^q(\omega)^3,\rW^{2,q}(\Omega_\rf)^3)$ as well as $D_\pr \in \cL(\rW_0^{2,q}(\omega)^3 \cap \rL_\rm^q(\omega)^3,\rW^{1,q}(\Omega_\rf) \cap \rL_\rm^q(\Omega_\rf))$ defined by
\begin{equation}\label{eq:lifting ops fluid & pressure}
    D_\fl b \coloneqq w \tand D_\pr b \coloneqq \psi,
\end{equation}
where $(v,\psi)$ is the unique solution to the stationary Stokes problem with inhomogeneous boundary conditions \eqref{eq:stat Stokes probl with inhom bc} resulting from \autoref{lem:sol stat Stokes probl with inhom bc}.

Next, we elaborate on the handling of the pressure by considering (weak) Neumann problems.
In fact, we take into account the Neumann problem 
\begin{equation}\label{eq:Neumann probl}
    \Delta \varphi = 0, \tin \Omega_\rf, \enspace \del_\nu \varphi = c, \ton \del \Omega_\rf, \enspace \int_{\Omega_\rf} \varphi \srd y = 0,
\end{equation}
and we denote by $N$ the solution operator to the Neumann problem \eqref{eq:Neumann probl}, so 
\begin{equation}\label{eq:lifting op N}
    N c \coloneqq \varphi.
\end{equation}
Observe that~$\int_{\del \Omega_\rf} c \srd S = 0$ has to hold by the divergence theorem.
We then define the space $\rW_\rm^{-\nicefrac{1}{q},q}(\del \Omega_\rf)$ by $\rW_\rm^{-\nicefrac{1}{q},q}(\del \Omega_\rf) \coloneqq \bigl\{f \in \rW^{-\nicefrac{1}{q},q}(\del \Omega_\rf) : \langle f,1 \rangle_{\rW^{-\nicefrac{1}{q},q},\rW^{1-\nicefrac{1}{q'},q'}} = 0\bigr\}$, for $\nicefrac{1}{q} + \nicefrac{1}{q'} = 1$ and note that the solution operator $N$ possesses the mapping properties
\begin{equation}\label{eq:mapping props of Neumann op}
    \begin{aligned}
        N 
        &\in \cL(\rW^{1-\nicefrac{1}{q},q}(\del \Omega_\rf) \cap \rL_\rm^q(\del \Omega_\rf),\rW^{2,q}(\Omega_\rf) \cap \rL_\rm^q(\Omega_\rf)),\\
        N 
        &\in \cL(\rW_\rm^{-\nicefrac{1}{q},q}(\del \Omega_\rf),\rW^{1,q}(\Omega_\rf) \cap \rL_\rm^q(\Omega_\rf)), \tand\\
        N
        &\in \cL(\rL_\rm^q(\del \Omega_\rf),\rW^{1+\nicefrac{1}{q}-\eps,q}(\Omega_\rf) \cap \rL_\rm^q(\Omega_\rf)), \tforall \eps > 0,
    \end{aligned}
\end{equation}
see for example \cite[Thm.~4.2 and~4.3]{LM:62}.
In this context, we also invoke the adjusted lifting operator 
\begin{equation}\label{eq:lifting op N_1}
    N_1 c_1 = N c, \twhere c(y) = 
    \left\{
    \begin{aligned}
        c_1(s), &\tif y = (s,0) \in \Gamma_0,\\
        0, &\tif y \in \Gamma_\rf.
    \end{aligned}
    \right.
\end{equation}

Concerning the thin structural layer, the last ingredient is the weak Neumann problem that corresponds to the Helmholtz projection.
Namely, we study the weak solution to the Neumann problem 
\begin{equation}\label{eq:weak Neumann probl}
    \Delta \varphi = \mdiv f, \tin \Omega_\rf, \enspace \del_\nu \varphi = f \cdot \nu, \ton \del \Omega_\rf,
\end{equation}
i.e., $\varphi \in \rW^{1,q}(\Omega_\rf)$ solves $\int_{\Omega_\rf} \nabla \varphi \cdot \nabla \psi \srd y = \int_{\Omega_\rf} f \cdot \nabla \psi \srd y$, for $\psi \in \rW^{1,q'}(\Omega_\rf)$ and $\nicefrac{1}{q} + \nicefrac{1}{q'} = 1$.
For the solvability of this problem, which is related to the existence of the Helmholtz projection, we refer again to \cite{SS:92}.
For $\varphi$ solving \eqref{eq:weak Neumann probl}, we define the solution operator $N_2 \in \cL(\rL^q(\Omega_\rf)^3,\rW^{1,q}(\Omega_\rf) \cap \rL_\rm^q(\Omega_\rf))$ by
\begin{equation}\label{eq:lifting op N_2}
    N_2 f \coloneqq \varphi.
\end{equation}

After addressing the coupling condition on the equality of velocities, we now tackle the second kinematic coupling condition, namely the continuity of displacement, i.e., $d_1 = P_\rm(\eta_1) \mre_3$ on $\Gamma_0$.
We recall that $\del \Omega_\rs = \Gamma_0 \cup \Gamma_\rs$ is of class $\rC^2$ by assumption.
For~$\lambda \in \C$, we then consider the stationary problem
\begin{equation}\label{eq:Lame probl with inhom bc}
    \left\{
    \begin{aligned}
        \lambda d - \cL d
        &= 0, &&\tin \Omega_\rs,\\
        d
        &= b', &&\ton \Gamma_0,\\
        d
        &= 0, &&\ton \Gamma_\rs.
    \end{aligned}
    \right.
\end{equation}
In the lemma below, we discuss the solvability of \eqref{eq:Lame probl with inhom bc}.
The discussion preceding \autoref{lem:props of the Lame op} together with the fact that $0 \in \rho(L_0)$ by \autoref{lem:props of the Lame op} reveals that we are in the framework of \cite[Thm.~2.1]{DHP:07}.
The latter implies that for all $b' \in \rW_0^{2,q}(\omega)^3 \cap \rL_\rm^q(\omega)^3$, there exists a unique solution $d \in \rW^{2,q}(\Omega_\rs)^3$ to~\eqref{eq:Lame probl with inhom bc}.

\begin{lem}\label{lem:solvability stat Lame probl}
Let $q \in (1,\infty)$ and $b' \in \rW_0^{2,q}(\omega)^3 \cap \rL_\rm^q(\omega)^3$.
Then for every $\lambda \in \C$ with $\Rep \lambda \ge 0$, there exists a unique solution $d \in \rW^{2,q}(\Omega_\rs)^3$ to \eqref{eq:Lame probl with inhom bc}.
\end{lem}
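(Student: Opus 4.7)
The plan is to reduce the inhomogeneous boundary value problem \eqref{eq:Lame probl with inhom bc} to a homogeneous Dirichlet problem governed by the Lam\'e operator $L_0$, whose spectral and functional-analytic properties have already been established in \autoref{lem:props of the Lame op}. First, I would construct a lifting $D \in \rW^{2,q}(\Omega_\rs)^3$ of the boundary datum satisfying $D|_{\Gamma_\rs} = b' \mre_3$ together with $D = 0$ on $\del \Omega_\rs \setminus \Gamma_\rs$. Since $b' \in \rW^{2-\nicefrac{1}{q},q}(\omega)$ has the correct trace regularity, such a lifting can be produced by a standard right inverse of the Dirichlet trace composed with a cut-off function localizing near $\Gamma_\rs$ so as to ensure vanishing on the remaining portion of the boundary.

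With this lifting in hand, the ansatz $d = D + \tilde{d}$ reduces the problem to finding $\tilde{d} \in \rD(L_0) = \rW^{2,q}(\Omega_\rs)^3 \cap \rW_0^{1,q}(\Omega_\rs)^3$ solving
\begin{equation*}
    (\lambda - L_0)\tilde{d} = \cL D - \lambda D \in \rL^q(\Omega_\rs)^3.
\end{equation*}
By \autoref{lem:props of the Lame op}, the spectral bound $s(L_0) < 0$ together with $-L_0 \in \Hinfty(\rL^q(\Omega_\rs)^3)$ at angle strictly less than $\nicefrac{\pi}{2}$ guarantees that $\sigma(L_0)$ is contained in an open left half-plane; in particular, the closed right half-plane $\{\lambda \in \C : \Rep \lambda \ge 0\}$ is contained in $\rho(L_0)$. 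Hence $\tilde{d} = (\lambda - L_0)^{-1}(\cL D - \lambda D)$ is well defined, and $d \coloneqq D + \tilde{d} \in \rW^{2,q}(\Omega_\rs)^3$ is the desired solution. Uniqueness is immediate: the difference of two solutions lies in $\rD(L_0)$ and is annihilated by $\lambda - L_0$, hence vanishes.

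The only mildly delicate point is the construction of the lifting $D$ accommodating both boundary conditions simultaneously, especially at the edge $\del \Gamma_\rs \cap \overline{\del \Omega_\rs \setminus \Gamma_\rs}$, where a cut-off argument as above is necessary. A cleaner alternative, already hinted at in the discussion preceding the lemma, is to bypass the explicit lifting entirely and appeal directly to \cite[Thm.~2.1]{DHP:07} on inhomogeneous elliptic boundary value problems with spectral parameter. The applicability of that result is ensured by the strong normal ellipticity of the Lam\'e operator and the Lopatinskii--Shapiro condition under \eqref{eq:Lame coeffs}, together with the resolvent estimate implicit in \autoref{lem:props of the Lame op}; this provides existence, uniqueness, and the required $\rW^{2,q}$-regularity in one stroke, uniformly for $\lambda$ in any closed sector contained in $\rho(L_0) \cup \{0\}$.
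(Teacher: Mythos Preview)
Your second approach---appealing directly to \cite[Thm.~2.1]{DHP:07} via strong normal ellipticity, the Lopatinskii--Shapiro condition, and $0 \in \rho(L_0)$---is exactly how the paper proceeds; the lemma is in fact stated without a separate proof, the justification being the sentence immediately preceding it. Your first approach via an explicit lifting plus the resolvent of $L_0$ is a perfectly legitimate and more hands-on alternative, and you are right to flag the edge $\partial\Gamma_\rs$ as the delicate point: a cut-off that vanishes on $\del\Omega_\rs \setminus \Gamma_\rs$ and equals one on $\Gamma_\rs$ cannot be smooth across that edge, so the lifting has to come instead from verifying that the combined Dirichlet datum $(b'\mre_3 \text{ on } \Gamma_\rs,\ 0 \text{ elsewhere})$ lies in $\rW^{2-\nicefrac{1}{q},q}(\del\Omega_\rs)^3$. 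In the paper's application one always has $b' = P_\rm(\eta_1)$ with $\eta_1 \in \rW_0^{2,q}(\omega)$, so the requisite compatibility at $\del\omega$ is available; the DHP route simply avoids making this explicit.
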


\autoref{lem:solvability stat Lame probl} allows us to define the lifting operator $D_\rs \colon \rW_0^{2,q}(\omega)^3 \cap \rL_\rm^q(\omega)^3 \to \rW^{2,q}(\Omega_\rs)^3$ by
\begin{equation}\label{eq:lifting op Lame}
    D_\rs b' \coloneqq d,
\end{equation}
where $d$ is the solution to \eqref{eq:Lame probl with inhom bc} in the case $\lambda = 0$.
Let us observe that the aforementioned operator can also be extended to a continuous operator $\Tilde{D}_\rs \colon \rL_\rm^q(\omega)^3 \to \rW^{s,q}(\Omega_\rs)^3$ for all $s \in (0,\nicefrac{1}{q})$.
For the existence of a Dirichlet operator in a similar context, we also refer to \cite[Sec.~7.1]{BBH:26}.
By a slight abuse of notation, we still denote the associated operator by $D_\rs$ for simplicity.

\subsection{The multilayered fluid-structure operator}\label{ssec:multilayered fluid-structure operator}
\ 

Thanks to the preparation in \autoref{ssec:lifting args}, we may now start to reformulate the linearized problem~\eqref{eq:lin per multilayered interaction probl} based on the lifting operators $D_\fl$, $D_\pr$ and $D_\rs$ as introduced in \eqref{eq:lifting ops fluid & pressure} and \eqref{eq:lifting op Lame} as well as the Neumann operators $N$, $N_1$ and $N_2$ from \eqref{eq:lifting op N}, \eqref{eq:lifting op N_1} and \eqref{eq:lifting op N_2}.
Let us note that we reformulate the {\em initial value problem} instead of the time-periodic problem.
By our methodology, we require a good understanding of the initial value problem, because we reduce the linear time-periodic problem to the initial value problem and a spectral condition.
In order to shorten notation, for the Piola-Kirchhoff stress tensor $\sigma_\rs$ from \eqref{eq:Piola-Kirchhoff stress tensor}, we also introduce the operator $T_\rs$ defined by
\begin{equation}
    T_\rs(d_1,d_2) \coloneqq P_\rm \left(\left.(\sigma_\rs(d_1,d_2) \mre_3)\right|_{\Gamma_0} \cdot \mre_3\right).
\end{equation}

The result on the equivalent reformulation of \eqref{eq:lin per multilayered interaction probl} now reads as follows.

\begin{prop}\label{prop:equiv reform lin probl}
Let $p$, $q \in (1,\infty)$, and consider
\begin{equation*}
    \begin{aligned}
        u
        &\in \rW^{1,p}(0,T;\rL^q(\Omega_\rf)^3) \cap \rL^p(0,T;\rW^{2,q}(\Omega_\rf)^3), \enspace \pi \in \rL^p(0,T;\rW^{1,q}(\Omega_\rf) \cap \rL_\rm^q(\Omega_\rf)),\\
        \eta_1 
        &\in \rW^{2,p}(0,T;\rL^q(\omega)) \cap \rW^{1,p}(0,T;\rW^{2,q}(\omega)) \cap \rL^p(0,T;\rW^{4,q}(\omega)),\\
        \eta_2 
        &\in \rW^{1,p}(0,T;\rL^q(\omega)) \cap \rL^p(0,T;\rW^{2,q}(\omega)),\\
        d_1 
        &\in \rW^{1,p}(0,T;\rW^{1,q}(\Omega_\rs)^3), \enspace d_2 \in \rW^{1,p}(0,T;\rL^q(\Omega_\rs)^3) \cap \rL^p(0,T;\rW^{1,q}(\Omega_\rs)^3) \tand\\
        d_1 + \delta d_2 
        &\in \rL^p(0,T;\rW^{2,q}(\Omega_\rs)^3). 
    \end{aligned}
\end{equation*}
Then $(u,\pi,\eta_1,\eta_2,d_1,d_2)$ is a solution to \eqref{eq:lin per multilayered interaction probl}, with the periodicity conditions replaced by suitable initial conditions $u(0) = u_0$, $\eta_1(0) = \eta_{1,0}$, $\eta_2(0) = \eta_{2,0}$, $d_1(0) = d_{1,0}$ and $d_2(0) = d_{2,0}$, if and only if
\begin{equation*}
    \left\{
    \begin{aligned}
        \bP u'
        &= A_0 \bP(u - D_\fl \eta_2) + \bP f, &&\tin (0,T),\\
        \eta_1'
        &= \eta_2, &&\tin (0,T),\\
        (\Id + \gamma_\rm N_1) \eta_2' + P_\rm \Delta_\rp^2 \eta_1 - \Delta_\rp \eta_2
        &= \gamma_\rm N(\Delta \bP u \cdot \nu) + T_\rs(d_1,d_2) + P_\rm g + \gamma_\rm N_2 f, &&\tin (0,T),\\
        \int_\omega \eta_1(\cdot,s) \srd s 
        &= 0, &&\tin (0,T),\\
        d_1'
        &= d_2, &&\tin (0,T),\\
        d_2'
        &= L_0\bigl(d_1 - D_\rs (P_\rm(\eta_1) \mre_3) + \delta (d_2 - D_\rs (P_\rm(\eta_2) \mre_3))\bigr) + h, &&\tin (0,T),\\
        (\Id - \bP)u
        &= (\Id - \bP)D_\fl \eta_2,\\
        \pi
        &= N(\Delta \bP u \cdot \nu) - N_1 \eta_2' + N_2 f,\\
        (\bP u,\eta_1,\eta_2,d_1,d_2)(0)
        &= (\bP u_0,\eta_{1,0},\eta_{2,0},d_{1,0},d_{2,0}).
    \end{aligned}
    \right.
\end{equation*}
\end{prop}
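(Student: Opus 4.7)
The plan is to prove the equivalence block by block via an algebraic reformulation that uses the Helmholtz projection $\bP$ to decouple velocity and pressure and the lifting operators $D_\fl$ and $D_\rs$ to absorb the inhomogeneous kinematic coupling conditions. The forward implication is the substantive one; the reverse is then obtained by reversing each algebraic step, noting that $\eta_1' = \eta_2$ and $d_1' = d_2$ transfer unchanged.

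For the forward direction I would begin with the fluid block. Since both $u$ and $D_\fl \eta_2$ are divergence-free and share the trace $P_\rm(\eta_2)\mre_3$ on $\del \Omega_\rf \setminus \Gamma_0$ and vanish on $\Gamma_0$, the difference $u - D_\fl \eta_2$ lies in $\rW^{2,q}(\Omega_\rf)^3 \cap \rW_0^{1,q}(\Omega_\rf)^3 \cap \rL_\sigma^q(\Omega_\rf) = \rD(A_0)$; in particular, $(\Id - \bP) u = (\Id - \bP) D_\fl \eta_2$. Applying $\bP$ to the fluid equation, and using that $(\Id - \bP)u$ is the gradient of a harmonic function (so $\Delta u = \Delta \bP u$) and that $D_\fl \eta_2$ solves the homogeneous stationary Stokes problem, hence $\Delta D_\fl \eta_2 = \nabla D_\pr \eta_2$ and thus $\bP \Delta \bP D_\fl \eta_2 = 0$, reduces the fluid equation to $\bP u' = A_0 \bP(u - D_\fl \eta_2) + \bP f$.

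Next I would reconstruct the pressure by combining the divergence of the fluid equation, which gives $\Delta \pi = \mdiv f$, with Neumann data obtained by taking the normal component on the boundary. The boundary condition $u = P_\rm(\eta_2)\mre_3$ yields $\del_t u \cdot \nu = \eta_2'$ on $\del \Omega_\rf \setminus \Gamma_0$ and $0$ on $\Gamma_0$, which through superposition with the operators $N$, $N_1$ and $N_2$ produces the claimed formula $\pi = N_2 f + N(\Delta \bP u \cdot \nu) - N_1 \eta_2'$. Substituting $\gamma_\rm \pi$ into the plate equation via the identity \eqref{eq:more comp repr kinematic cc in plate eq} and moving the $\gamma_\rm N_1 \eta_2'$ contribution to the left-hand side produces exactly the factor $(\Id + \gamma_\rm N_1)\eta_2'$. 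For the thick layer, since each $D_\rs(P_\rm(\eta_i)\mre_3)$ solves the homogeneous stationary Lam\'e system with the correct trace on $\Gamma_\rs$, the shifted variables $d_i - D_\rs(P_\rm(\eta_i)\mre_3)$ lie in $\rD(L_0)$, and $\mdiv \sigma_\rs(d_1,d_2) = \cL(d_1 + \delta d_2)$ rewrites in the stated $L_0$-form.

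For the converse I would define $u \coloneqq \bP u + (\Id - \bP) D_\fl \eta_2$ and take $\pi$ from the stated formula; the divergence-free condition and the matching of the full boundary trace of $u$ then follow from the fact that $\bP(u - D_\fl \eta_2) \in \rD(A_0)$ is already implicit in the operator equation together with the construction of $D_\fl$. Recomposing the evolution of $\bP u$ with the gradient of $\pi$ and using the identity $(\Id - \bP)\Delta \bP u = \nabla N(\Delta \bP u \cdot \nu)$, which holds because $(\Id - \bP)\Delta \bP u$ is the gradient of a harmonic function with Neumann data $\Delta \bP u \cdot \nu$, recovers the original fluid equation, and the plate and thick-layer equations follow by reversing the algebra above. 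The main obstacle I anticipate is the bookkeeping of the interplay between the Helmholtz decomposition and the several lifting operators, in particular ensuring that the decomposition $\Delta u = \Delta \bP u + \nabla(\text{harmonic})$ is used consistently when extracting Neumann data for the pressure, and that in the reverse direction the full vectorial trace of the reconstructed $u$ (not only its normal component) is correctly matched to $P_\rm(\eta_2)\mre_3$.
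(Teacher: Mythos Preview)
Your proposal is correct and follows essentially the same route as the paper: the paper defers the fluid, pressure, and plate blocks to \cite[Prop.~4.4]{MT:21}, which is precisely the Helmholtz/lifting argument you sketch, and then treats only the thick layer in detail via the same substitution $d_i - D_\rs(P_\rm(\eta_i)\mre_3)$ you propose. Your outline in fact supplies more detail than the paper's proof, and the potential pitfalls you flag (consistent use of $\Delta u = \Delta \bP u$ for the Neumann data, and recovering the full vectorial trace of $u$ in the converse direction) are handled exactly as you indicate.
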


\begin{proof}
For the procedure to reformulate the Stokes equation as well as the first order system accounting for the thin structural layer, we refer to \cite[Prop.~4.4]{MT:21}.
Thus, it remains to handle the thick structural layer in the rest of the proof.

For $b' = P_\rm(\eta_1) \mre_3$, the part associated with the thick structural layer in the aforementioned initial value problem-analogue of \eqref{eq:lin per multilayered interaction probl} can be written as
\begin{equation}\label{eq:lin probl thick structural layer}
    \left\{
    \begin{aligned}
        \del_t d_1 
        &= d_2, &&\tin (0,T) \times \Omega_\rs,\\
        \del_t d_2 - \mdiv \sigma_\rs(d_1,d_2)
        &= h, &&\tin (0,T) \times \Omega_\rs,\\
        d_1
        &= b', &&\ton (0,T) \times \Gamma_0,\\
        d_1
        &= 0, &&\ton (0,T) \times \Gamma_\rs,\\
        d_1(0)
        &= d_{1,0}, \enspace d_2(0) = d_{2,0}, &&\tin \Omega_\rs.
    \end{aligned}
    \right.
\end{equation}
In order to address the inhomogeneous boundary condition in \eqref{eq:lin probl thick structural layer}$_3$, we invoke the lifting operator $D_\rs$ from \eqref{eq:lifting op Lame}.
We also set $\td_1 \coloneqq d_1 - D_\rs b'$, $\td_2 \coloneqq d_2 - D_\rs (\del_t b')$, $\td_{1,0} \coloneqq d_{1,0} - (D_\rs b')(0)$ as well as~$\td_{2,0} \coloneqq d_{2,0} - (D_\rs (\del_t b'))(0)$.
It then follows that 
\begin{equation*}
    \del_t \td_1 = \del_t d_1 - \del_t(D_\rs b') = d_2 - \del_t (D_\rs b') = \td_2 + D_\rs(\del_t b') - \del_t (D_\rs b'),
\end{equation*}
which is in turn equivalent with
\begin{equation*}
    \del_t d_1 = d_2.
\end{equation*}
On the other hand, by construction of the lifting operator $D_\rs$ as made precise in \eqref{eq:Lame probl with inhom bc} and \autoref{lem:solvability stat Lame probl}, we find that $\cL(D_\rs b' + \delta D_\rs(\del_t b')) = 0$.
As $\td_1$ and $\td_2$ further satisfy homogeneous Dirichlet boundary conditions by construction, we get $\mdiv \sigma_\rs(\td_1,\td_2) = L_0(\td_1 + \delta \td_2)$.
Thus, we obtain
\begin{equation*}
    \begin{aligned}
        \del_t \td_2 
        &= \del_t d_2 - \del_t (D_\rs(\del_t b'))\\
        &= \mdiv \sigma_\rs(d_1,d_2) + h - \del_t(D_\rs(\del_t b'))\\
        &= \mdiv \sigma_\rs(\td_1 + D_\rs b',\td_2 + D_\rs(\del_t b')) + h - \del_t(D_\rs(\del_t b'))\\
        &= \mdiv \sigma_\rs(\td_1,\td_2) + \cL(D_\rs b' + \delta D_\rs (\del_t b')) + h - \del_t(D_\rs(\del_t b'))\\
        &= L_0(\td_1 + \delta \td_2) + h - \del_t(D_\rs(\del_t b')).
    \end{aligned}
\end{equation*}
Rearranging the terms, and plugging in $b' = P_\rm(\eta_1) \mre_3$ and $\del_t b' = P_\rm(\eta_2) \mre_3$, we find that the latter expression is equivalent with
\begin{equation*}
    \del_t d_2 = L_0(d_1 - D_\rs(P_\rm(\eta_1) \mre_3) + \delta (d_2 - D_\rs(P_\rm(\eta_2) \mre_3)) + h.
\end{equation*}
This establishes the desired equivalence and thus completes the proof.
\end{proof}

Before reformulating the linearized problem in operator form, we elaborate on the so-called {\em added mass operator} $M_s$ defined by $M_s \coloneqq \Id + \gamma_\rm N_1$, where the lifting operator $N_1$ has been made precise in \eqref{eq:lifting op N_1}.
For the result below, see \cite[Lemma~4.5]{MT:21}.

\begin{lem}\label{lem:props of added mass op}
For the added mass operator $M_s = \Id + \gamma_\rm N_1 \in \cL(\rL_\rm^q(\omega))$, we have that
\begin{enumerate}[(a)]
    \item $M_s$ is an automorphism in $\rW_\rm^{s,q}(\omega)$ for all $s \in [0,1)$,
    \item $M_s - \Id \in \cL(\rL_\rm^q(\omega),\rW_\rm^{s,q}(\omega))$ for all $s \in [0,1)$, and
    \item the operator $M_s - \Id$ is compact on $\rL_\rm^q(\omega)$.
\end{enumerate}
\end{lem}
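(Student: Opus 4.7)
The key observation is that $M_s - \Id = \gamma_\rm N_1$, so the three claims concern regularity, compactness, and invertibility of this lifting-trace composition. I would first prove (b), deduce (c) as an immediate consequence via Rellich-Kondrachov, and then establish (a) by the Fredholm alternative combined with an energy-type injectivity argument on the underlying harmonic extension. For (b), given $c_1 \in \rL_\rm^q(\omega)$, extending by zero to a function $c$ on $\del \Omega_\rf$ produces an element of $\rL_\rm^q(\del \Omega_\rf)$, since the vanishing spatial average of $c_1$ on $\omega$ gives $\int_{\del \Omega_\rf} c \srd S = 0$. The third mapping property of $N$ in \eqref{eq:mapping props of Neumann op} then yields $Nc \in \rW^{1+\nicefrac{1}{q}-\eps,q}(\Omega_\rf)$ for every $\eps > 0$; the trace theorem gives $\left.Nc\right|_\Gamma \in \rW^{1-\eps,q}(\omega)$, and $P_\rm$ preserves this regularity. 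Picking $\eps \in (0,1-s)$ settles (b) for each $s \in [0,1)$. Assertion (c) is now immediate: by (b), $\gamma_\rm N_1 \colon \rL_\rm^q(\omega) \to \rW_\rm^{s_0,q}(\omega)$ is continuous for some $s_0 \in (0,1)$, and since $\omega \subset \R^2$ is bounded and smooth, Rellich-Kondrachov renders the embedding $\rW_\rm^{s_0,q}(\omega) \hookrightarrow \rL_\rm^q(\omega)$ compact.

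For (a), fix $s \in [0,1)$, pick $r \in (s,1)$, and set $K_s \coloneqq \gamma_\rm N_1$. By (b), $K_s$ maps $\rL_\rm^q(\omega)$ continuously into $\rW_\rm^{r,q}(\omega)$, and composing with the continuous embedding $\rW_\rm^{s,q}(\omega) \hookrightarrow \rL_\rm^q(\omega)$ and the compact embedding $\rW_\rm^{r,q}(\omega) \hookrightarrow \rW_\rm^{s,q}(\omega)$ shows that $K_s$ is compact on $\rW_\rm^{s,q}(\omega)$. By the Fredholm alternative it suffices to prove that $M_s$ is injective on $\rW_\rm^{s,q}(\omega)$. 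Suppose $M_s c_1 = 0$. Applying (b) once yields $c_1 = -\gamma_\rm N_1 c_1 \in \rW_\rm^{1-\eps,q}(\omega)$ for every $\eps > 0$, hence by Sobolev embedding $c_1 \in \rL^2(\omega)$; the third mapping property of $N$ applied with exponent $2$ then gives $\varphi \coloneqq N_1 c_1 \in \rW^{\nicefrac{3}{2}-\eps,2}(\Omega_\rf) \subset \rH^1(\Omega_\rf)$. The function $\varphi$ is harmonic in $\Omega_\rf$ with $\del_\nu \varphi = c_1$ on $\Gamma$, $\del_\nu \varphi = 0$ on $\Gamma_0$, and $\int_{\Omega_\rf} \varphi \srd y = 0$; the kernel equation $c_1 + P_\rm(\left.\varphi\right|_\Gamma) = 0$ reads $c_1 = C - \left.\varphi\right|_\Gamma$ on $\Gamma$, with $C \coloneqq \frac{1}{|\omega|}\int_\omega \varphi(s',0) \srd s'$. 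Green's identity then yields
\begin{equation*}
    0 = \int_{\Omega_\rf} \varphi \Delta \varphi \srd y = -\int_{\Omega_\rf} |\nabla \varphi|^2 \srd y + |\omega| C^2 - \int_\Gamma \varphi^2 \srd S,
\end{equation*}
and the Cauchy-Schwarz inequality $|\omega| C^2 \le \int_\Gamma \varphi^2 \srd S$ forces $\nabla \varphi \equiv 0$; combined with $\int_{\Omega_\rf} \varphi \srd y = 0$, this gives $\varphi \equiv 0$, and in particular $c_1 = C - \left.\varphi\right|_\Gamma = 0$.

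The most delicate point is the energy identity in the injectivity step. Since the Neumann problem \eqref{eq:Neumann probl} with the zero-extended datum $c$ is only solvable in $\rW^{1+\nicefrac{1}{q}-\eps,q}(\Omega_\rf)$ — a priori short of $\rH^1$ when $q < 2$, and the discontinuity of $c$ across the edge $\overline{\Gamma} \cap \overline{\Gamma_0}$ rules out a simple appeal to the first mapping property of $N$ — one must first use the smoothing estimate (b) to put $c_1$ into the $\rL^2$ regime before the standard $\rH^1$-based Green identity can be applied. Once this bootstrap is in place, the favorable geometry $\omega \times (-\alpha,0) \subset \Omega_\rf$ combined with Cauchy-Schwarz makes the boundary contribution nonpositive, and the remaining arguments (the Fredholm alternative on $\rW_\rm^{s,q}(\omega)$ and the harmonic extension testing) are routine.
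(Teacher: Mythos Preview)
Your argument is correct. The paper does not actually prove this lemma; it simply cites \cite[Lemma~4.5]{MT:21} and moves on. Your route---first establishing the smoothing estimate~(b) from the third mapping property of $N$ in \eqref{eq:mapping props of Neumann op}, deducing compactness~(c) via Rellich--Kondrachov, and then invoking the Fredholm alternative together with an $\rH^1$-energy identity for the harmonic extension to obtain injectivity and hence~(a)---is the natural one and is essentially what one finds in the cited reference. Your bootstrap from $\rL^q$ to $\rL^2$ before applying the weak Neumann formulation with $\psi = \varphi$ is the correct way to justify the energy identity when $q < 2$, and the Cauchy--Schwarz step handling the constant $C$ is clean. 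One cosmetic point: in the final line you could note explicitly that $\varphi \equiv 0$ forces $C = 0$ (as the average of $\varphi|_\Gamma$), so that $c_1 = C - \varphi|_\Gamma = 0$ follows; as written this is implicit but clear.
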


For brevity, we also introduce
\begin{equation}\label{eq:def of map K}
    \cK(f) \coloneqq P_\rm\Bigl(\bigl(\mu_\rs\bigl(\nabla f + (\nabla f)^\top\bigr) + \lambda_\rs \mdiv(f) \Id\bigr) \mre_3 \left. \right|_{\Gamma_0} \cdot \mre_3 \Bigr)
\end{equation}
for sufficiently regular $f$.
Consequently, we have $T_\rs(d_1,d_2) = \cK(d_1 + \delta d_2)$.

We now define the {\em multilayered fluid-structure operator} $A_\mfs \colon \rD(A_\mfs) \subset \rX_0 \to \rX_0$ on the ground space
\begin{equation}\label{eq:ground space multilayered}
    \begin{aligned}
        \rX_0 = \Bigl\{(\bP u,\eta_1,\eta_2,d_1,d_2) 
        &\in \rL_\sigma^q(\Omega_\rf) \times \bigl(\rW_0^{2,q}(\omega) \cap \rL_\rm^q(\omega)\bigr) \times \rL_\rm^q(\omega) \times \rW^{1,q}(\Omega_\rs)^3 \times \rL^q(\Omega_\rs)^3 :\\
        &\qquad d_1 - D_\rs(P_\rm(\eta_1) \mre_3) \in \rW_0^{1,q}(\Omega_\rs)^3\Bigr\}
    \end{aligned}
\end{equation}
by
\begin{equation}\label{eq:multilayered fluid-structure op}
    A_\mfs \coloneqq \begin{pmatrix}
            A_0 & 0 & -A_0 \bP D_\fl & 0 & 0\\
            0 & 0 & \Id & 0 & 0\\
            M_s^{-1} \gamma_\rm N \Delta(\cdot) \cdot \nu & -M_s^{-1} P_\rm \Delta_\rp^2 & M_\rs^{-1} \Delta_\rp & M_s^{-1}\cK & \delta M_s^{-1}\cK\\
            0 & 0 & 0 & 0 & \Id\\
            0 & -L_0 D_\rs (P_\rm (\cdot) \mre_3) & -\delta L_0 D_\rs (P_\rm (\cdot) \mre_3) & L_0 & \delta L_0
        \end{pmatrix}
\end{equation}
with domain
\begin{equation}\label{eq:dom multilayered fluid-structure op}
    \begin{aligned}
        \rD(A_\mfs) = \bigl\{&(\bP u,\eta_1,\eta_2,d_1,d_2) \in \rW^{2,q}(\Omega_\rf)^3 \cap \rL_\sigma^q(\Omega_\rf) \times \rD(A_\rp) \times \rW^{1,q}(\Omega_\rs)^3 \times \rW^{1,q}(\Omega_\rs)^3 :\\
        &\enspace \bP u - \bP D_\fl \eta_2 \in \rD(A_0) \tand (d_1 - D_\rs ((P_\rm \eta_1)\mre_3),d_2 - D_\rs ((P_\rm \eta_2)\mre_3)) \in \rD(A_\rs)\bigr\}.
    \end{aligned}
\end{equation}

Thanks to \autoref{prop:equiv reform lin probl}, the above operator $A_\mfs$ defined in \eqref{eq:multilayered fluid-structure op} allows us to reformulate the initial value problem-analogue of \eqref{eq:lin per multilayered interaction probl} in the following compact way.
For $\overline{g} \coloneqq M_s^{-1} P_\rm g + M_s^{-1} \gamma_\rm N_2 f$, we get
\begin{equation}\label{eq:reform in op form multilayered fs op}
    \begin{aligned}
        \frac{\srd}{\srd t} \begin{pmatrix}
            \bP u\\ \eta_1\\ \eta_2\\ d_1\\ d_2
        \end{pmatrix} 
        &= A_\mfs \begin{pmatrix}
            \bP u\\ \eta_1\\ \eta_2\\ d_1\\ d_2
        \end{pmatrix} + \begin{pmatrix}
            \bP f\\ 0\\ \overline{g}\\ 0\\h
        \end{pmatrix}, \tfor t \in (0,T), \enspace 
        \begin{pmatrix}
            \bP u\\ \eta_1\\ \eta_2\\ d_1\\ d_2
        \end{pmatrix}(0)
        = \begin{pmatrix}
            \bP u_0\\ \eta_{1,0}\\ \eta_{2,0}\\ d_{1,0}\\ d_{2,0}
        \end{pmatrix}.
    \end{aligned}
\end{equation}

We now establish the maximal $\rL^p$-regularity, or, equivalently, the $\cR$-sectoriality of $A_\mfs$.

\begin{prop}\label{prop:cR-sect and max reg up to shift}
Consider $q \in (1,\infty)$.
Then there exists $\mu \ge 0$ such that the (shifted) multilayered fluid-structure operator $-A_\mfs + \mu$, with $A_\mfs$ as introduced in \eqref{eq:multilayered fluid-structure op}, is $\cR$-sectorial on $\rX_0$ with $\cR$-angle $\phi_{-A_\mfs + \mu}^{\cR} < \nicefrac{\pi}{2}$.
In particular, the operator $-A_\mfs + \mu$ has maximal $\rL^p$-regularity on $\rX_0$.
\end{prop}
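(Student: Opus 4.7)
The plan is to pursue the decoupling approach in the spirit of \cite{BBH:24b}: a similarity transform will absorb the non-diagonal coupling baked into $\rD(A_\mfs)$, thereby reducing the problem to a perturbation analysis around a block-diagonal $\cR$-sectorial operator. Concretely, I will consider the isomorphism $\cT \colon \rX_0 \to \widehat{\rX}_0$ defined by
\begin{equation*}
    \cT \begin{pmatrix} v \\ \eta_1 \\ \eta_2 \\ d_1 \\ d_2 \end{pmatrix} \coloneqq \begin{pmatrix} v - \bP D_\fl \eta_2 \\ \eta_1 \\ \eta_2 \\ d_1 - D_\rs(P_\rm \eta_1 \mre_3) \\ d_2 - D_\rs(P_\rm \eta_2 \mre_3) \end{pmatrix},
\end{equation*}
where $\widehat{\rX}_0 \coloneqq \rL_\sigma^q(\Omega_\rf) \times (\rW_0^{2,q}(\omega) \cap \rL_\rm^q(\omega)) \times \rL_\rm^q(\omega) \times \rW_0^{1,q}(\Omega_\rs)^3 \times \rL^q(\Omega_\rs)^3$ is the naturally decoupled product space. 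Continuity and invertibility of $\cT$ on $\rX_0$ follow from the mapping properties of $\bP$, $P_\rm$ and the very-weak extensions of the lifts $D_\fl$ and $D_\rs$ alluded to in \autoref{ssec:lifting args}. By construction, $\cT\rD(A_\mfs) = \rD(A_0) \times \rD(A_\rp) \times \rD(A_\rs)$ is a genuine product, and since $\cR$-sectoriality and its angle are invariant under similarity, it will suffice to establish the claim for $\tA_\mfs \coloneqq \cT A_\mfs \cT^{-1}$ on $\widehat{\rX}_0$.

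Next, I split $\tA_\mfs = A_\rd + P$ into a block-diagonal piece and an off-diagonal remainder. A direct computation using $\cL(D_\rs b') = 0$ from \autoref{lem:solvability stat Lame probl} yields
\begin{equation*}
    A_\rd = \diag\Bigl(A_0,\ \begin{pmatrix} 0 & \Id \\ -M_s^{-1} P_\rm \Delta_\rp^2 & M_s^{-1} \Delta_\rp \end{pmatrix},\ A_\rs\Bigr),
\end{equation*}
i.e., the Stokes operator, the damped-plate operator from \eqref{eq:op matrix damped plate} premultiplied by $\diag(\Id, M_s^{-1})$, and the thick-layer operator of \autoref{prop:bdd Hinfty & spectral bound thick structural layer}. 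Since $M_s$ is an automorphism of $\rW_\rm^{s,q}(\omega)$ for every $s \in [0,1)$ by \autoref{lem:props of added mass op}(a), the $\cR$-sectoriality of the plate block will transfer from that of $A_\rp$; combined with the Stokes result from \autoref{ssec:prelim cons} and \autoref{prop:bdd Hinfty & spectral bound thick structural layer}, this yields some $\mu_0 \ge 0$ for which $-A_\rd + \mu_0$ is $\cR$-sectorial on $\widehat{\rX}_0$ with $\cR$-angle strictly below $\nicefrac{\pi}{2}$.

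The hardest step will be controlling the perturbation $P$, which collects every cross-coupling produced by the conjugation: in the fluid row, compositions of the form $\bP D_\fl M_s^{-1} \gamma_\rm N \Delta(\cdot) \cdot \nu$, $\bP D_\fl M_s^{-1} \cK(\cdot)$, and $\bP D_\fl M_s^{-1}$ applied to the plate-operator output; in the third row, the same bracket without the leading $\bP D_\fl$; and in the fifth row, an analogous term $D_\rs(P_\rm M_s^{-1}[\cdots]\mre_3)$. My plan is to prove that each component of $P$ is $A_\rd$-bounded with a fractional exponent strictly below one, after which a Young-type inequality converts this into a relative bound whose prefactor can be rendered arbitrarily small by enlarging the shift $\mu$. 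The order gain will come from the smoothing built into the lifting operators from \autoref{ssec:lifting args}: the Neumann operators $N$, $N_1$, $N_2$ recover up to $1 + \nicefrac{1}{q} - \varepsilon$ derivatives by \eqref{eq:mapping props of Neumann op}; the Lam\'e lift $D_\rs$ maps $\rW^{2-\nicefrac{1}{q},q}(\omega)$ into $\rW^{2,q}(\Omega_\rs)^3$ by \autoref{lem:solvability stat Lame probl}; $D_\fl$ produces $\rW^{2,q}$-regularity out of $\rW_0^{2,q} \cap \rL_\rm^q$ data; and $M_s - \Id$ picks up any fractional order below one by \autoref{lem:props of added mass op}(b). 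Interpolating these gains against the fractional-power identification $\rD((-L_0)^{\nicefrac{1}{2}}) = \rW_0^{1,q}(\Omega_\rs)^3$ from \eqref{eq:frac power domain Lame}, together with the analogous intermediate-domain characterizations for $A_0$ and the plate block available through their bounded $\Hinfty$-calculi, should dominate each entry of $P$ by $(-A_\rd + \mu)^{\theta}$ with some $\theta < 1$.

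Once this relative smallness of $P$ is in place, a standard perturbation theorem for $\cR$-sectoriality such as \cite[Prop.~13.1]{KW:04} upgrades $-A_\rd + \mu$ to $-\tA_\mfs + \mu$ for $\mu$ sufficiently large, preserving the $\cR$-angle below $\nicefrac{\pi}{2}$. Transporting back via $\cT$ yields $\cR$-sectoriality of $-A_\mfs + \mu$ on $\rX_0$, and the final assertion on maximal $\rL^p$-regularity is then immediate: $\rX_0$ is UMD thanks to $q \in (1,\infty)$, so Weis's theorem \cite{Wei:01} converts $\cR$-sectoriality with angle strictly below $\nicefrac{\pi}{2}$ into maximal $\rL^p$-regularity.
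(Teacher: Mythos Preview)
Your strategy is sound and would work, but it takes a noticeably different path from the paper's. The paper does \emph{not} fully decouple: its similarity transform $S$ only subtracts the structural lift $D_\rs(P_\rm\eta_i\,\mre_3)$ from $(d_1,d_2)$, leaving the fluid component untouched. This yields a transformed operator on $\rY_0$ with domain $\rD(A_\fs)\times\rD(A_\rs)$, where $A_\fs$ is the \emph{entire} fluid--plate operator of \cite{MT:21}. The $\cR$-sectoriality of $A_\fs$ is then imported as a black box from \cite[Thm.~4.6]{MT:21}, so the paper never needs to analyze the modified plate block or the fluid--plate cross terms by hand. The transformed operator is written in $2\times2$ block form, the lower-triangular part $\tA_\mfs^\rt=\begin{pmatrix}A_\fs & 0\\ \tC & A_\rs\end{pmatrix}$ is shown to be $\cR$-sectorial via an explicit factorization of its resolvent, and the remaining perturbation $\tB$ (involving only $B$, $BT$, $TB$) is verified to be \emph{relatively compact}, hence relatively bounded with bound zero. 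Your full decoupling instead resurrects all the fluid--plate cross terms inside $P$ and forces you to re-derive, in effect, the content of \cite[Thm.~4.6]{MT:21}; this is correct but considerably more laborious.

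One point in your outline does need tightening: the claim that $\cR$-sectoriality of the $M_s^{-1}$-modified plate block ``transfers from $A_\rp$'' via the automorphism property of $M_s$ is not a similarity argument, since left multiplication by $\diag(\Id,M_s^{-1})$ is not conjugation. What actually works is a perturbation: $\diag(\Id,M_s^{-1})A_\rp - A_\rp = \diag\bigl(0,\,(M_s^{-1}-\Id)\bigr)A_\rp$, and $M_s^{-1}-\Id = -M_s^{-1}(M_s-\Id)$ is compact on $\rL_\rm^q(\omega)$ by \autoref{lem:props of added mass op}(c), so the difference is $A_\rp$-compact and the $\cR$-sectoriality survives up to a shift. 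With that fix, and with the fractional-power bookkeeping for the many entries of $P$ carried out carefully, your argument goes through; the paper's route simply avoids both chores by leaning on \cite{MT:21} and exploiting the triangular structure.
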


\begin{proof}
The second part of the assertion is an immediate consequence of the first one by the characterization of maximal $\rL^p$-regularity on UMD spaces, such as the present $\rX_0$, as provided in \autoref{lem:char max reg via R-sect}, so we focus on the first part.

First, let us observe that the operator matrix $A_\mfs$ has a {\em non-diagonal} domain, which renders the analysis more complicated.
In order to ease this, and to split the considerations for the coupling of the thin and the thick structural layer with the fluid, we will use a decoupling approach.
More precisely, we will invoke a suitable similarity transform $S$, based on the lifting for the thick structural layer as explained in \autoref{ssec:lifting args}, and exploit that $\cR$-sectoriality is preserved under similarity transforms, see \autoref{lem:sim trafos}(a).

Let us now make the above considerations precise.
The aim is to consider $\td_1 = d_1 - D_\rs((P_\rm \eta_1) \mre_3)$ as well as $\td_2 = d_2 - D_\rs((P_\rm \eta_2) \mre_3)$.
The similarity transform thus takes the shape
\begin{equation*}
    \begin{aligned}
        S 
        &= \begin{pmatrix}
        \Id & 0 & 0 & 0 & 0\\
        0 & \Id & 0 & 0 & 0\\
        0 & 0 & \Id & 0 & 0\\
        0 & -D_\rs(P_\rm(\cdot) \mre_3) & 0 & \Id & 0\\
        0 & 0 & -D_\rs(P_\rm(\cdot) \mre_3) & 0 & \Id
    \end{pmatrix}, \,
    S^{-1} = \begin{pmatrix}
        \Id & 0 & 0 & 0 & 0\\
        0 & \Id & 0 & 0 & 0\\
        0 & 0 & \Id & 0 & 0\\
        0 & D_\rs(P_\rm(\cdot) \mre_3) & 0 & \Id & 0\\
        0 & 0 & D_\rs(P_\rm(\cdot) \mre_3) & 0 & \Id
    \end{pmatrix}
    \end{aligned}
\end{equation*}
for the inverse $S^{-1}$.
In order to simplify the subsequent calculations, we will also write 
\begin{equation*}
    S = \begin{pmatrix}
        \Id_\fs & 0\\
        T & \Id_\rs
    \end{pmatrix} \tand S^{-1} = \begin{pmatrix}
        \Id_\fs & 0\\
        -T & \Id_\rs
    \end{pmatrix}, \twith T = \begin{pmatrix}
        0 & -D_\rs(P_\rm(\cdot) \mre_3) & 0\\
        0 & 0 & -D_\rs(P_\rm(\cdot) \mre_3)
    \end{pmatrix}.
\end{equation*}
In this context, we introduce the space
\begin{equation}\label{eq:alt ground space Y_0}
    \rY_0 \coloneqq \rL_\sigma^q(\Omega_\rf) \times \rX_0^\rp \times \rX_0^\rs = \rL_\sigma^q(\Omega_\rf) \times \left(\rW_0^{2,q}(\omega) \cap \rL_\rm^q(\omega)\right) \times \rL_\rm^q(\omega) \times \rW_0^{1,q}(\Omega_\rs)^3 \times \rL^q(\Omega_\rs)^3.
\end{equation}
It readily follows that $\rY_0 = S \rX_0$ as well as $\rX_0 = S^{-1} \rY_0$.
Moreover, we write the multilayered fluid-structure operator $A_\mfs$ as $A_\mfs = \begin{pmatrix} A_\fs & B\\ C & A_\rs \end{pmatrix}$, where
\begin{equation}\label{eq:fluid-structure operator}
    \begin{aligned}
        A_\fs 
        &= \begin{pmatrix}
        A_0 & 0 & -A_0 \bP D_\fl\\
        0 & 0 & \Id\\
        M_s^{-1} \gamma_\rm N \Delta(\cdot) \cdot \nu & -M_s^{-1} P_\rm \Delta_\rp^2 & M_\rs^{-1} \Delta_\rp
        \end{pmatrix}, \enspace B = \begin{pmatrix}
            0 & 0\\
            0 & 0\\
            M_s^{-1}\cK & \delta M_s^{-1}\cK
        \end{pmatrix} \tand\\
        C 
        &= \begin{pmatrix}
            0 & 0 & 0\\
            0 & -L_0 D_\rs (P_\rm (\cdot) \mre_3) & -\delta L_0 D_\rs (P_\rm (\cdot) \mre_3)
        \end{pmatrix}.
    \end{aligned}
\end{equation}
At this stage, let us recall from \cite[Thm.~4.6]{MT:21} that the operator $A_\fs \colon \rD(A_\fs) \subset \rX_0^\fs \to \rX_0^\fs$ with domain
\begin{equation}\label{eq:domain fluid-structure operator}
    \rD(A_\fs) = \bigl\{(\bP u,\eta_1,\eta_2) \in \rW^{2,q}(\Omega_\rf)^3 \cap \rL_\sigma^q(\Omega_\rf) \times \rD(A_\rp) : \bP u - \bP D_\fl \eta_2 \in \rD(A_0)\bigr\}
\end{equation}
has the property that $-A_\fs$ is $\cR$-sectorial on $\rX_0^\fs = \rL_\sigma^q(\Omega_\rf) \times \rW_0^{2,q}(\omega) \cap \rL_\rm^q(\omega) \times \rL_\rm^q(\omega)$ with $\phi_{-A_\fs}^\cR < \nicefrac{\pi}{2}$.

Using the block operator structure of $A_\mfs$, we find that the operator $\tA_\mfs$ on $\rY_0$ takes the shape
\begin{equation}\label{eq:sim trafo multilayered fs op}
    \begin{aligned}
        \tA_\mfs 
        &= S A_\mfs S^{-1} = \begin{pmatrix}
            A_\fs - B T & B\\
            T A_\fs - T B T + C - A_\rs T & T B  + A_\rs
        \end{pmatrix}, \twith\\ \rD(\tA_\mfs) 
        &= \rD(A_\fs) \times \rD(A_\rs) \subset \rY_0,
    \end{aligned}
\end{equation}
where $\rD(A_\fs)$ and $\rD(A_\rs)$ have been made precise in \eqref{eq:domain fluid-structure operator} and \eqref{eq:op matrix thick structure}, respectively.
In particular, $\tA_\mfs$ has a {\em diagonal} domain when viewed as a block operator matrix, but it is of a more complicated shape.
In the following, we elaborate more on the precise shape of $\tA_\mfs$.
For simplicity of notation, we will only write~$D_\rs$ instead of $D_\rs(P_\rm(\cdot) \mre_3)$ in the sequel.
Now we calculate
\begin{equation*}
    \begin{aligned}
        B T
        &= \begin{pmatrix}
        0 & 0 & 0\\
        0 & 0 & 0\\
        0 & -M_s^{-1}\cK D_\rs & -\delta M_s^{-1}\cK D_\rs
        \end{pmatrix}, \enspace T B = \begin{pmatrix}
            0 & 0\\
            -D_\rs M_s^{-1} \cK & -\delta D_\rs M_s^{-1} \cK
        \end{pmatrix}.\\ 
        T A_\fs 
        &= \begin{pmatrix}
        0 & 0 & -D_\rs\\
        - D_\rs M_s^{-1} \gamma_\rm N \Delta(\cdot) \cdot \nu & D_\rs M_s^{-1} P_\rm \Delta_\rp^2 & -D_\rs M_\rs^{-1} \Delta_\rp
        \end{pmatrix},\\
        T B T
        &= \begin{pmatrix}
            0 & 0 & 0\\
            0 & D_\rs M_s^{-1} \cK D_\rs & \delta D_\rs M_s^{-1} \cK D_\rs
        \end{pmatrix} \tand
        A_\rs T
        = \begin{pmatrix}
            0 & 0 & -D_\rs\\
            0 & -L_0 D_\rs & -\delta L_0 D_\rs
        \end{pmatrix}.
    \end{aligned}
\end{equation*}
With these computations, we infer that $\tA_\mfs$ takes the shape
\begin{equation}\label{eq:transformed multilayered fs op}
    \tA_\mfs = \begin{pmatrix}
        A_\fs - B T & B\\
        \tC & T B + A_\rs
    \end{pmatrix}, \twhere 
\end{equation}
\begin{equation*}
    \begin{aligned}
        A_\fs - B T
        &= \begin{pmatrix}
            A_0 & 0 & -A_0 \bP D_\fl\\
            0 & 0 & \Id\\
            M_s^{-1} \gamma_\rm N \Delta(\cdot) \cdot \nu & -M_s^{-1} (P_\rm \Delta_\rp^2 - \cK D_\rs) & M_\rs^{-1}(\Delta_\rp + \delta \cK D_\rs)
        \end{pmatrix},\\
        \tC
        &= \begin{pmatrix}
            0 & 0 & 0\\
            -D_\rs M_s^{-1} \gamma_\rm N \Delta(\cdot) \cdot \nu & D_\rs M_s^{-1} (P_\rm \Delta_\rp^2 - \cK D_\rs) & - D_\rs M_s^{-1} (\Delta_\rp + \delta \cK D_\rs)
        \end{pmatrix} \tand\\
        T B + A_\rs
        &= \begin{pmatrix}
            0 & \Id\\
            L_0 - D_\rs M_s^{-1} \cK & \delta (L_0 - D_\rs M_s^{-1} \cK)
        \end{pmatrix}.
    \end{aligned}
\end{equation*}
First, it readily follows from the result on $A_\fs$ that we recalled above together with \autoref{prop:bdd Hinfty & spectral bound thick structural layer}(c) that the diagonal operator matrix $-\diag(A_\fs,A_\rs)$, with domain $\rD(\tA_\mfs) = \rD(A_\fs) \times \rD(A_\rs)$, is $\cR$-sectorial on $\rY_0$, with $\cR$-angle strictly less than $\nicefrac{\pi}{2}$.
Next, we consider the triangular block operator matrix $\tA_\mfs^{\triangle} \colon \rD(\tA_\mfs^{\triangle}) \subset \rY_0 \to \rY_0$ defined by
\begin{equation}\label{eq:triang matrix}
    \tA_\mfs^{\triangle} = \begin{pmatrix}
        A_\fs & 0\\
        \tC & A_\rs
    \end{pmatrix}, \twith \rD(\tA_\mfs^{\triangle}) = \rD(\tA_\mfs) = \rD(A_\fs) \times \rD(A_\rs).
\end{equation}
For $\lambda \in \rho(A_\fs)$, we then get the representation
\begin{equation}\label{eq:repr lambda - triang matrix}
    \lambda - \tA_\mfs^{\triangle} = \begin{pmatrix}
        \lambda - A_\fs & 0\\
        -\tC & \lambda - A_\rs
    \end{pmatrix} = \begin{pmatrix}
        \Id & 0\\
        -\tC R(\lambda,A_\fs) & \Id
    \end{pmatrix} \cdot \begin{pmatrix}
        \lambda - A_\fs & 0\\
        0 & \lambda - A_\rs
    \end{pmatrix}.
\end{equation}
Thus, by the $\cR$-sectoriality of $-\diag(A_\fs,A_\rs)$, we find that $- \tA_\mfs^{\triangle}$ is $\cR$-sectorial on $\rY_0$ with $\phi_{- \tA_\mfs^{\triangle}}^{\cR} < \nicefrac{\pi}{2}$.

With regard to \eqref{eq:transformed multilayered fs op}, it thus remains to discuss the part $\tB \coloneqq \begin{pmatrix}
        - B T & B\\
        0 & T B
    \end{pmatrix}$,
and to show that $\tB$ is a relatively bounded perturbation of $\tA_\mfs^{\triangle}$, with arbitrarily small bound.
In fact, as a preparation for the upcoming spectral theory, we will even verify that it is a relatively compact perturbation.
For this purpose, let us first observe that the embedding $\rD(L_0) = \rW^{2,q}(\Omega_\rs)^3 \cap \rW_0^{1,q}(\Omega_\rs)^3 \hookrightarrow \rW^{2-\eps,q}(\Omega)^3 \cap \rW_0^{1,q}(\Omega_\rs)^3$ is compact for all $\eps > 0$.
In the following, we consider $\eps > 0$ sufficiently small so that $1- \nicefrac{1}{q} - \eps > 0$.
By construction, it follows that $\cK \colon \rW^{2-\eps,q}(\Omega_\rs)^3 \cap \rW_0^{1,q}(\Omega_\rs)^3 \to \rW^{1-\nicefrac{1}{q}-\eps,q}(\omega)^3 \cap \rL_\rm^q(\omega)$ is bounded.
By \autoref{lem:props of added mass op}(a), this yields the relative compactness of $B$.
From \autoref{lem:props of added mass op}(a), we also recall that $M_s$ is an automorphism on $\rW^{1-\nicefrac{1}{q}-\eps,q}(\omega) \cap \rL_\rm^q(\omega)$.
On the other hand, we invoke the compactness of the embedding $\rW^{4,q}(\omega) \cap \rW_0^{2,q}(\omega) \cap \rL_\rm^q(\omega) \hookrightarrow  \rW_0^{2,q}(\omega) \cap \rL_\rm^q(\omega)$ joint with the boundedness of the operators 
\begin{equation*}
    \begin{aligned}
        D_\rs 
        &\colon \rW_0^{2,q}(\omega)^3 \cap \rL_\rm^q(\omega)^3 \to \rW^{2,q}(\Omega_\rs)^3, \enspace \cK \colon \rW^{2,q}(\Omega_\rs)^3 \to \rW^{1-\nicefrac{1}{q}-\eps,q}(\omega) \cap \rL_\rm^q(\omega) \tand\\
        M_s^{-1} 
        &\colon \rW^{1-\nicefrac{1}{q}-\eps,q}(\omega) \cap \rL_\rm^q(\omega) \to \rW^{1-\nicefrac{1}{q}-\eps,q}(\omega) \cap \rL_\rm^q(\omega).
    \end{aligned}
\end{equation*}
This implies the relative compactness of $B T$.
With regard to the relative compactness of $T B$, we additionally invoke that (the extension of) $D_\rs$ from \eqref{eq:lifting op Lame} maps $\rL^q(\omega)$ to $\rW^{s,q}(\Omega_\rs)^3$ for all $s < \nicefrac{1}{q}$.

In total, we have verified that $\tB$ is a relatively compact perturbation of $\tA_\mfs^{\triangle}$.
This already yields that~$\tB$ is a relatively bounded perturbation of $\tA_\mfs^{\triangle}$ with $\tA_\mfs^{\triangle}$-bound zero, see \cite[Lemma~III.2.16]{EN:00}.
Classical perturbation theory for $\cR$-sectorial operators, see, e.g., \cite[Cor.~2]{KW:01}, then yields that there exists $\mu \ge 0$ such that $-\tA_\mfs + \mu$ is $\cR$-sectorial on $\rY_0$, with $\phi_{-\tA_\mfs + \mu}^{\cR} < \nicefrac{\pi}{2}$.
By the \autoref{lem:sim trafos}(a) on the preservation of the $\cR$-sectoriality under similarity transforms, this completes the proof.
\end{proof}

\subsection{Spectral theory}\label{ssec:spectral theory}
\ 

In view of \autoref{prop:Arendt-Bu thm}, it remains to verify the invertibility of the multilayered fluid-structure operator $A_\mfs$ from \eqref{eq:multilayered fluid-structure op}.
Here we show the invertibility.
Due to the parabolic-hyperbolic damped character of the present problem, the spectral analysis is significantly more involved than for the problem without thick structure.
In fact, the operator $A_\mfs$ from \eqref{eq:multilayered fluid-structure op} {\em does not} have compact resolvent, so its spectrum does not necessarily only consist of eigenvalues, and we need to handle the essential spectrum.

In order to prepare for the following considerations, we collect and discuss several preliminary results in the sequel.
Let us start with spectral theory for the fluid-structure operator $A_\fs$ as recalled in \eqref{eq:fluid-structure operator} and \eqref{eq:domain fluid-structure operator}.
The following result on the spectrum of $A_\fs$ can be found in the proof of \cite[Thm.~4.6]{MT:21}.

\begin{lem}\label{lem:spectrum fluid-structure op}
Let $q \in (1,\infty)$.
Then $\sigma(A_\fs) \subset \{\lambda \in \C : \Rep \lambda < 0\}$, so $s(A_\fs) < 0$.
\end{lem}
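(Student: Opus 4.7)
The plan is to combine a compactness argument (ensuring the spectrum is purely discrete) with an $\rL^2$ energy identity for the eigenvalue problem, and to close the strict inequality $s(A_\fs)<0$ using the $\cR$-sectoriality already at our disposal.

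First, I would observe that $A_\fs$ has compact resolvent. Indeed, its domain embeds into $\rW^{2,q}(\Omega_\rf)^3 \times \rW^{4,q}(\omega) \times \rW^{2,q}(\omega)$, and by Rellich--Kondrachov each component is compactly included in the corresponding ground space. Consequently $\sigma(A_\fs)$ is purely discrete, consisting of eigenvalues of finite algebraic multiplicity, and by local elliptic regularity the eigenfunctions are in fact $q$-independent and smooth. Thus it suffices to work at $q=2$ and exclude eigenvalues $\lambda$ with $\Rep \lambda \geq 0$.

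Let $\lambda \in \C$ with $\Rep \lambda \geq 0$ be an eigenvalue, with eigenfunction $(u,\pi,\eta_1,\eta_2)$ satisfying the stationary fluid-plate system: $\lambda u = \mdiv\sigma_\rf(u,\pi)$, $\mdiv u=0$, $u=0$ on $\Gamma_0$, $u(\cdot,0)=\eta_2 \mre_3$, $\lambda\eta_1=\eta_2$, and $\lambda\eta_2+P_\rm\Delta_s^2\eta_1-\Delta_s\eta_2+P_\rm\bigl((\sigma_\rf\mre_3)\cdot\mre_3\bigr|_\Gamma\bigr)=0$ with clamped boundary on $\del\omega$. Testing the fluid equation with $\overline{u}$ and integrating by parts, using $\mdiv\overline{u}=0$ and the boundary conditions, yields
\begin{equation*}
  \lambda \int_{\Omega_\rf}|u|^2 = -2\int_{\Omega_\rf}|\D(u)|^2 + \int_\omega P_\rm\bigl((\sigma_\rf\mre_3)\cdot\mre_3\bigr|_\Gamma\bigr)\,\overline{\eta_2}\,\rd s,
\end{equation*}
where the fact that $\int_\omega\overline{\eta_2}=0$ allowed the insertion of $P_\rm$. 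Substituting the value of $P_\rm\bigl((\sigma_\rf\mre_3)\cdot\mre_3\bigr|_\Gamma\bigr)$ from the plate equation and using $\eta_2=\lambda\eta_1$ to integrate by parts the biharmonic and Laplacian terms (clamped BC), I obtain
\begin{equation*}
  \lambda\Bigl(\|u\|_{\rL^2}^2+\|\eta_2\|_{\rL^2}^2\Bigr)+\overline{\lambda}\,\|\Delta_s\eta_1\|_{\rL^2}^2 = -2\|\D(u)\|_{\rL^2}^2-\|\nabla_s\eta_2\|_{\rL^2}^2.
\end{equation*}
Taking real parts gives $(\Rep\lambda)\bigl(\|u\|_{\rL^2}^2+\|\eta_2\|_{\rL^2}^2+\|\Delta_s\eta_1\|_{\rL^2}^2\bigr)\leq 0$, forcing $\Rep\lambda\leq 0$; equality forces $\D(u)\equiv 0$ and $\nabla_s\eta_2\equiv 0$. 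By the Korn inequality together with $u=0$ on $\Gamma_0$ this yields $u\equiv 0$, hence $\eta_2|_\Gamma=0$; combined with $\eta_2\in\rL_\rm^q(\omega)$ being constant this forces $\eta_2\equiv 0$, and then the relation $\lambda\eta_1=\eta_2$ gives $\eta_1\equiv 0$ unless $\lambda=0$. In the remaining case $\lambda=0$, the plate equation collapses to $P_\rm\Delta_s^2\eta_1=0$ with clamped BC and zero average; testing with $\eta_1$ and using $\int_\omega\eta_1=0$ yields $\|\Delta_s\eta_1\|_{\rL^2}^2=0$, so again $\eta_1\equiv 0$.

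Thus $\sigma(A_\fs)\subset\{\Rep\lambda<0\}$. Since $-A_\fs$ is $\cR$-sectorial with $\cR$-angle strictly less than $\pi/2$, the spectrum lies inside a proper subsector of the open left half-plane, and by discreteness the supremum $s(A_\fs)$ is attained at an eigenvalue, yielding the strict bound $s(A_\fs)<0$. The only genuinely delicate step is the equality case in the energy identity; every other step is either a routine integration by parts or a compactness/sectoriality observation already available in the paper.
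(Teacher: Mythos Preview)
Your argument is correct. The paper itself does not give a proof of this lemma; it simply cites \cite[Thm.~4.6]{MT:21}, where the result is established. Your proposal reconstructs what is essentially the standard argument one finds there: compact resolvent forces the spectrum to be discrete and $q$-independent, and the $\rL^2$ energy identity obtained by testing the fluid and plate equations rules out eigenvalues on the closed right half-plane. The equality case is handled correctly via Korn/rigidity and the zero-average constraint.

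One small remark on the final step: you do not actually need to argue that the supremum is attained at an eigenvalue. Since $-A_\fs$ is sectorial with angle $\phi<\nicefrac{\pi}{2}$, the spectrum lies in the closed sector $-\overline{\Sigma_\phi}$, whose only intersection with the imaginary axis is $\{0\}$. Having shown $0\notin\sigma(A_\fs)$, closedness of the spectrum together with the sector condition (in which $|\Rep\lambda|\geq|\lambda|\cos\phi$) immediately gives $s(A_\fs)<0$; the compact-resolvent route also works but is not needed here.
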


Next, we elaborate on the point spectrum $\sigma_\rp$ of the multilayered fluid-structure operator $A_\mfs$.

\begin{lem}\label{lem:point spectrum multilayered fluid-structure op}
Consider $q \in (1,\infty)$, and recall $A_\mfs$ from \eqref{eq:multilayered fluid-structure op}.
Then $\sigma_\rp(A_\mfs) \subset \{\lambda \in \C : \Rep \lambda < 0\}$.
\end{lem}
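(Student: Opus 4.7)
The plan is a coupled energy-dissipation argument. Suppose $\lambda \in \sigma_\rp(A_\mfs)$ admits a non-trivial eigenvector $V = (\bP u, \eta_1, \eta_2, d_1, d_2) \in \rD(A_\mfs)$; the goal is to show $\Rep \lambda < 0$. The first step is to use \autoref{prop:equiv reform lin probl} in reverse (with all sources and initial data set to zero, and $\del_t$ replaced by multiplication by $\lambda$) to recover the full fluid velocity $u = \bP u + (\Id - \bP)D_\fl \eta_2$ and a pressure $\pi$ so that $(u,\pi,\eta_1,\eta_2,d_1,d_2)$ satisfies the stationary eigenvalue version of \eqref{eq:lin per multilayered interaction probl}: in particular the algebraic relations $\eta_2 = \lambda\eta_1$, $d_2 = \lambda d_1$, the PDEs $\lambda u = \mdiv \sigma_\rf(u,\pi)$, $\mdiv u = 0$, and $\lambda d_2 = \mdiv \sigma_\rs(d_1,d_2)$, together with the kinematic couplings $u|_{\Gamma} = \eta_2 \mre_3$, $d_1|_{\Gamma_\rs} = \eta_1\mre_3$, and homogeneous conditions on $\Gamma_0$, $\del\Omega_\rs \setminus \Gamma_\rs$, and $\del\omega$. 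Regularity built into $\rD(A_\mfs)$ together with the boundedness of $\Omega_\rf$, $\omega$, $\Omega_\rs$ makes the subsequent $\rL^2$-integration by parts legitimate, independently of $q \in (1,\infty)$.

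The heart of the argument will be a consolidated energy identity, obtained by pairing the fluid equation with $\overline{u}$ in $\rL^2(\Omega_\rf)^3$, the plate equation with $\overline{\eta_2}$ in $\rL^2(\omega)$, and the thick-layer equation with $\overline{d_2}$ in $\rL^2(\Omega_\rs)^3$, and then summing. The two interfacial contributions of $(\sigma_\rf \mre_3) \cdot \mre_3$ will cancel between the fluid and plate equations, and the two contributions of $T_\rs(d_1,d_2)$ will cancel between the plate and thick-layer equations, thanks to the kinematic couplings, the mean-zero property of $\eta_2$ (which allows $P_\rm$ to be dropped when paired against $\overline{\eta_2}$), and the outward normals $+\mre_3$ on $\Gamma$ and $-\mre_3$ on $\Gamma_\rs$. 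After inserting $\eta_2 = \lambda\eta_1$, $d_2 = \lambda d_1$ and using the clamped boundary data, the identity collapses to
\begin{equation*}
    \lambda\, K + \overline\lambda\bigl(\|\Delta_s\eta_1\|_{\rL^2(\omega)}^2 + E_\rs(d_1)\bigr) + \delta|\lambda|^2 E_\rs(d_1) + 2\|\D u\|_{\rL^2(\Omega_\rf)}^2 + \|\nabla_s\eta_2\|_{\rL^2(\omega)}^2 = 0,
\end{equation*}
where $K \coloneqq \|u\|_{\rL^2(\Omega_\rf)}^2 + \|\eta_2\|_{\rL^2(\omega)}^2 + \|d_2\|_{\rL^2(\Omega_\rs)}^2$ and $E_\rs(d) \coloneqq 2\mu_\rs\|\D d\|_{\rL^2(\Omega_\rs)}^2 + \lambda_\rs\|\mdiv d\|_{\rL^2(\Omega_\rs)}^2$. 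The Lam\'e hypotheses \eqref{eq:Lame coeffs} combined with Korn's inequality on vector fields vanishing on $\del\Omega_\rs \setminus \Gamma_\rs$ (a coercivity equivalent to $0 \in \rho(L_0)$, already known from \autoref{lem:props of the Lame op}) guarantee $E_\rs(d) \ge 0$.

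Taking the real part makes every term non-negative as soon as $\Rep\lambda \ge 0$, so each must vanish. From $\|\D u\|=0$ and $u|_{\Gamma_0} = 0$, Korn yields $u \equiv 0$, whence $\eta_2 = 0$ through the interface condition. Whenever $\lambda \neq 0$ one has $\Rep\lambda + \delta|\lambda|^2 > 0$, so $E_\rs(d_1) = 0$ and coercivity forces $d_1 \equiv 0$; then the trace on $\Gamma_\rs$ gives $\eta_1 = 0$ and $d_2 = \lambda d_1 = 0$, contradicting the non-triviality of $V$. This already excludes every $\lambda$ with $\Rep\lambda \ge 0$ and $\lambda \neq 0$.

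The exceptional value $\lambda = 0$ degenerates the dissipative coefficient $\delta|\lambda|^2$ and is the main obstacle. The strategy there is: $\lambda = 0$ and the kinematic relations give $\eta_2 = 0$, $d_2 = 0$; the homogeneous stationary Stokes problem with $u|_\Gamma = u|_{\Gamma_0} = 0$ yields $u \equiv 0$ and $\pi$ constant; and applying $0 \in \rho(L_0)$ to $\mdiv\sigma_\rs(d_1,0) = 0$ with the boundary data on $d_1$ forces $d_1 = D_\rs(\eta_1\mre_3)$. The plate equation then reduces to $P_\rm \Delta_s^2 \eta_1 = \cK(D_\rs(\eta_1\mre_3))$. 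Testing against $\eta_1$, the left-hand side becomes $\|\Delta_s\eta_1\|_{\rL^2(\omega)}^2$ after two integrations by parts, whereas the right-hand side, after an integration by parts in $\Omega_\rs$ exploiting $\cL D_\rs(\eta_1\mre_3) = 0$, the outward normal $-\mre_3$ on $\Gamma_\rs$, and the vanishing of $D_\rs(\eta_1\mre_3)$ on $\del\Omega_\rs \setminus \Gamma_\rs$, collapses to $-E_\rs(D_\rs(\eta_1\mre_3))$. The resulting identity $\|\Delta_s\eta_1\|_{\rL^2(\omega)}^2 + E_\rs(D_\rs(\eta_1\mre_3)) = 0$ together with the clamped boundary conditions and the coercivity of $E_\rs$ forces $\eta_1 = 0$ and $d_1 = 0$, completing the exclusion of the closed right half-plane from $\sigma_\rp(A_\mfs)$.
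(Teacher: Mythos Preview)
Your argument is correct and follows essentially the same route as the paper: test the fluid, plate, and thick-layer equations in $\rL^2$ against $\overline u$, $\overline{\eta_2}$, $\overline{d_2}$, use the kinematic couplings and the mean-zero property of $\eta_2$ to cancel the interface contributions, and read off $\Rep\lambda \le 0$ from the resulting identity. Two small differences are worth noting. First, your treatment of $\lambda = 0$ --- reducing to $P_\rm\Delta_s^2\eta_1 = \cK(D_\rs(\eta_1\mre_3))$ and testing against $\eta_1$ to obtain $\|\Delta_s\eta_1\|_{\rL^2}^2 + E_\rs(D_\rs(\eta_1\mre_3)) = 0$ --- is in fact more explicit than the paper's, which dispatches that case with a brief appeal to ``the boundary conditions of $d_1$ and $\eta_1$''. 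Second, the paper reduces to $q = 2$ by invoking $q$-independence of the point spectrum, whereas you assert that the $\rL^2$-pairings are legitimate for all $q \in (1,\infty)$; this is not quite right for small $q$ (e.g.\ $d_1, d_2 \in \rW^{1,q}(\Omega_\rs)^3$ need not lie in $\rL^2(\Omega_\rs)^3$ when $q < \nicefrac{6}{5}$ in three dimensions), so the paper's reduction is the cleaner way to cover general $q$.
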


\begin{proof}
First, let us observe that the point spectrum is independent of $q$, so we may consider $q = 2$ and thus test the eigenvalue problem $(\lambda - A_\mfs) (\bP u,\eta_1,\eta_2,d_1,d_2)^\top = 0$.
Similarly as in \autoref{prop:equiv reform lin probl}, we find that the eigenvalue problem can be equivalently rewritten as
\begin{equation}\label{eq:resolvent problem assoc with multilayered fs op}
    \left\{
    \begin{aligned}
        \lambda u - \mdiv \sigma_\rf(u,\pi)
        &= 0, &&\tin \Omega_\rf,\\
        \mdiv u
        &= 0, &&\tin \Omega_\rf,\\
        \lambda \eta_1 - \eta_2
        &= 0, &&\tin \omega,\\
        \lambda \eta_2 + P_\rm \Delta_s^2 \eta_1 - \Delta_s \eta_2
        &= \gamma_\rm \pi + P_\rm \bigl(\left.(\sigma_\rs(d_1,d_2) \mre_3)\right|_{\Gamma_0} \cdot \mre_3\bigr), &&\tin \omega,\\
        \int_\omega \eta_1 \srd s
        &= 0,\\
        \lambda d_1 - d_2 
        &= 0, &&\tin \Omega_\rs,\\
        \lambda d_2 - \mdiv \sigma_\rs(d_1,d_2)
        &= 0, &&\tin \Omega_\rs,\\
        u
        &= P_\rm(\eta_2) \mre_3, &&\ton \Gamma_0,\\
        d_1
        &= P_\rm(\eta_1) \mre_3, &&\ton \Gamma_0,\\
        u
        &= 0, &&\ton \Gamma_\rf,\\
        d_1
        &= 0, &&\ton \Gamma_\rs,\\
        \eta_1 = \nabla_s \eta_1 \cdot n_\omega
        &= 0, &&\ton \del \omega.
    \end{aligned}
    \right.
\end{equation}
At this stage, let us recall the modified trace operator $\gamma_\rm$ from \eqref{eq:mod trace op}, and we also invoke the cancellation of the other parts of the Cauchy stress for the fluid part as revealed in \eqref{eq:more comp repr kinematic cc in plate eq}.
Now, in order to determine the point spectrum, we test \eqref{eq:resolvent problem assoc with multilayered fs op}$_1$ by $\overline{u}$, \eqref{eq:resolvent problem assoc with multilayered fs op}$_4$ by $\overline{\eta}_2$ and \eqref{eq:resolvent problem assoc with multilayered fs op}$_7$ by $\overline{d}_2$ and integrate over the respective domains.
In fact, for \eqref{eq:resolvent problem assoc with multilayered fs op}$_1$, also invoking $\mdiv u = 0$ and the boundary conditions of $u$, we find that
\begin{equation}\label{eq:fluid eq tested}
    0 = \lambda \int_{\Omega_\rf} |u|^2 \srd x + 2 \int_{\Omega_\rf} |\D(u)|^2 \srd x + \int_{\Gamma_0} \pi \cdot \overline{P_\rm(\eta_2)} \srd s.
\end{equation}
Next, with regard to \eqref{eq:resolvent problem assoc with multilayered fs op}$_4$, we integrate by parts and additionally invoke that $\overline{\eta}_2 = \overline{\lambda} \overline{\eta}_1$ to deduce that
\begin{equation}\label{eq:plate eq tested}
    \lambda \int_\omega |\eta_2|^2 \srd s + \overline{\lambda} \int_\omega |\Delta_s \eta_1|^2 \srd s + \int_\omega |\nabla_s \eta_2|^2 \srd s = \int_\omega \gamma_\rm \pi \overline{\eta}_2 \srd s + \int_\omega P_\rm\left.\bigl(\sigma_\rs(d_1,d_2) \mre \bigr)\right|_{\Gamma_0} \cdot \mre_3 \overline{\eta}_2 \srd s.
\end{equation}
In order to combine \eqref{eq:fluid eq tested} and \eqref{eq:plate eq tested}, we observe that for functions $f$ and $g$ defined on $\omega$, and for $\overline{f}$ and~$\overline{g}$ representing the averages over $\omega$, we have $\int_\omega f P_\rm g \srd s = \int_\omega f g \srd s - |\omega| \overline{f} \overline{g} = \int_\omega (P_\rm f)  g \srd s$.
Thus, a concatenation of \eqref{eq:fluid eq tested} and \eqref{eq:plate eq tested} leads to
\begin{equation}\label{eq:tested fluid and plate eqs}
    \begin{aligned}
        &\lambda \int_{\Omega_\rf} |u|^2 \srd x + 2 \int_{\Omega_\rf} |\D(u)|^2 \srd x + \lambda \int_\omega |\eta_2|^2 \srd s + \overline{\lambda} \int_\omega |\Delta_s \eta_1|^2 \srd s\\
        &\quad + \int_\omega |\nabla_s \eta_2|^2 \srd s - \int_\omega P_\rm\left.\bigl(\sigma_\rs(d_1,d_2) \mre_3 \bigr)\right|_{\Gamma_0} \cdot \mre_3 \overline{\eta}_2 \srd s = 0.
    \end{aligned}
\end{equation}

It remains to handle \eqref{eq:resolvent problem assoc with multilayered fs op}$_7$.
First, we infer that $0 = \lambda \int_{\Omega_\rs} |d_2|^2 \srd x - \int_{\Omega_\rs} \mdiv \sigma_\rs (d_1,d_2) \cdot \overline{d}_2 \srd x$.
We recall~$\sigma_\rs$ from \eqref{eq:Piola-Kirchhoff stress tensor}, where we further introduce the notation $S_\rs(f) \coloneqq \mu_\rs\bigl((\nabla f + (\nabla f)^\top)\bigr) + \lambda_\rs \mdiv(f) \Id_3$, i.e., $\sigma_\rs(d_1,d_2) = S_\rs(d_1 + \delta d_2)$, and use $\lambda d_1 = d_2$ in \eqref{eq:resolvent problem assoc with multilayered fs op}$_6$ for
\begin{equation*}
    \begin{aligned}
        - \int_{\Omega_\rs} \mdiv \sigma_\rs (d_1,d_2) \overline{d}_2 \srd x
        &= -\int_{\Omega_\rs} \cL d_1 \cdot \overline{d}_2 \srd x - \delta \int_{\Omega_\rs} \cL d_2 \cdot \overline{d}_2 \srd x\\
        &= \overline{\lambda} \biggl[2 \mu_\rs \int_{\Omega_\rs} |\D(d_1)|^2 \srd x + \lambda_\rs \int_{\Omega_\rs} |\mdiv d_1|^2 \srd x - \int_{\del \Omega_\rs} S_\rs(d_1) \overline{d}_1 \cdot n\srd s\biggr]\\
        &\quad + \delta \biggl[2 \mu_\rs \int_{\Omega_\rs} |\D(d_2)|^2 \srd x + \lambda_\rs \int_{\Omega_\rs} |\mdiv d_2|^2 \srd x - \int_{\del \Omega_\rs} S_\rs(d_2) \overline{d}_2 \cdot n\srd s\biggr],
    \end{aligned}
\end{equation*}
where $n$ denotes the outer unit normal vector to $\del \Omega_\rs$.
In the following, we elaborate on the boundary terms.
Making use of $\overline{\lambda} \cdot \overline{d}_1 = \overline{d}_2$ as well as the boundary conditions for $d_1$, including $d_1 = P_\rm(\eta_1) \mre_3$ on the interface, and exploiting $\overline{\lambda} \overline{\eta}_1 = \overline{\eta}_2$, which results from \eqref{eq:resolvent problem assoc with multilayered fs op}$_3$, we deduce that
\begin{equation*}
    \begin{aligned}
        \\&\quad -\overline{\lambda} \int_{\del \Omega_\rs} S_\rs(d_1) \overline{d}_1 \cdot n \srd x - \delta \int_{\del \Omega_\rs} S_\rs(d_2) \overline{d}_2 \cdot n\srd x 
        = -\int_{\Gamma_0} S_\rs(d_1) \overline{d}_2 \cdot n \srd s - \delta \int_{\Gamma_0} S_\rs(d_2) \overline{d}_2 \cdot n \srd s\\
        &= - \overline{\lambda} \int_{\Gamma_0} S_\rs(d_1 + \delta d_2) \overline{d}_1 \cdot n \srd s
        = -\overline{\lambda} \int_{\Gamma_0} \sigma_\rs(d_1,d_2) \overline{P_\rm(\eta_1) \mre_3} \srd s
        = -\int_{\Gamma_0} \sigma_\rs(d_1,d_2) P_\rm(\overline{\eta}_2) \mre_3 \srd s.
    \end{aligned}
\end{equation*}
Putting together the latter calculations with \eqref{eq:tested fluid and plate eqs}, and employing similar arguments as above for the cancellation of the boundary terms, we conclude that
\begin{equation*}
    \begin{aligned}
        0
        &= \lambda \int_{\Omega_\rf} |u|^2 \srd x + 2 \int_{\Omega_\rf} |\D(u)|^2 \srd x + \lambda \int_\omega |\eta_2|^2 \srd s + \overline{\lambda} \int_\omega |\Delta_s \eta_1|^2 \srd s + \int_\omega |\nabla_s \eta_2|^2 \srd s + \lambda \int_{\Omega_\rs} |d_2|^2 \srd x\\
        &\quad + \overline{\lambda} \biggl[2 \mu_\rs \int_{\Omega_\rs} |\D(d_1)|^2 \srd x + \lambda_\rs \int_{\Omega_\rs} |\mdiv d_1|^2 \srd x\biggr] + \delta \biggl[2 \mu_\rs \int_{\Omega_\rs} |\D(d_2)|^2 \srd x + \lambda_\rs \int_{\Omega_\rs} |\mdiv d_2|^2 \srd x\biggr].
    \end{aligned}
\end{equation*}
The latter relation reveals that $\Rep \lambda \le 0$.
Moreover, if $\lambda = 0$, then $\D(u) = 0$, $\nabla_s \eta_2 = 0$, $\D(d_2) = 0$ as well as $\mdiv d_2 = 0$.
At first, this yields that $u$, $\eta_2$ and $d_2$ are constant, so the boundary conditions imply that $u = d_ 2 = 0$ and $\eta_2 = 0$, which in turn results in $d_1 = 0$ as well as $\eta_1 = 0$ thanks to the boundary conditions of $d_1$ and $\eta_1$.
This shows that $0 \notin \sigma_\rp(A_\mfs)$ and thus completes the proof of the lemma in conjunction with the previous considerations.
\end{proof}

The following result asserting that the spectral bound of the multilayered fluid-structure operator $A_\mfs$ is negative is of central importance.

\begin{prop}\label{prop:spectral theory multilayered fs op}
Let $q \in (1,\infty)$, and recall the multilayered fluid-structure operator $A_\mfs$ from \eqref{eq:multilayered fluid-structure op}.
Then there exists $\omega < 0$ such that $\sigma(A_\mfs) \subset \{\lambda \in \C : \Rep \lambda \le \omega\}$, so the spectral bound $s(A_\mfs)$ especially satisfies $s(A_\mfs) < 0$.
\end{prop}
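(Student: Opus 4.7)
The plan is to reduce the spectral analysis of $A_\mfs$ to that of the decoupled triangular operator $\tA_\mfs^\rt$ introduced in the proof of \autoref{prop:cR-sect and max reg up to shift}, and to combine the resulting essential-spectrum control with \autoref{lem:point spectrum multilayered fluid-structure op}. Since spectra are invariant under similarity transforms, I would first pass to $\tA_\mfs = S A_\mfs S^{-1}$ as in \eqref{eq:sim trafo multilayered fs op}, and write $\tA_\mfs = \tA_\mfs^\rt + \tB$ with $\tA_\mfs^\rt$ from \eqref{eq:triang matrix} and $\tB$ collecting the blocks $-BT$, $B$, and $TB$. As established at the end of the proof of \autoref{prop:cR-sect and max reg up to shift}, $\tB$ is $\tA_\mfs^\rt$-relatively compact.

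For $\tA_\mfs^\rt$, the factorization \eqref{eq:repr lambda - triang matrix} yields $\rho(A_\fs) \cap \rho(A_\rs) \subset \rho(\tA_\mfs^\rt)$ and hence $\sigma(\tA_\mfs^\rt) \subset \sigma(A_\fs) \cup \sigma(A_\rs)$. Combining \autoref{lem:spectrum fluid-structure op} with \autoref{prop:bdd Hinfty & spectral bound thick structural layer}(a), we have $s(A_\fs) < 0$ and $s(A_\rs) < 0$. The $\cR$-sectoriality of $-A_\fs$ and, by \autoref{prop:bdd Hinfty & spectral bound thick structural layer}(c), of $-A_\rs$ confines both spectra to closed sectors strictly in the left half plane, so there is $\omega_0 < 0$ with
\begin{equation*}
    \sigma(\tA_\mfs^\rt) \subset \{\lambda \in \C : \Rep \lambda \le \omega_0\}.
\end{equation*}
Invoking the stability of the essential spectrum under relatively compact perturbations (Kato/Weyl type), I then obtain $\sigma_\ess(\tA_\mfs) = \sigma_\ess(\tA_\mfs^\rt) \subset \sigma(\tA_\mfs^\rt) \subset \{\Rep \lambda \le \omega_0\}$.

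Outside the essential spectrum, $\sigma(\tA_\mfs)$ consists of isolated eigenvalues of finite algebraic multiplicity that can accumulate only on $\sigma_\ess(\tA_\mfs)$ or at infinity. By \autoref{prop:cR-sect and max reg up to shift} and similarity invariance, $\tA_\mfs - \mu$ is sectorial with angle less than $\nicefrac{\pi}{2}$, so $\sigma(\tA_\mfs)$ is contained in a left-opening sector with vertex at $\mu$, which has bounded intersection with any vertical strip. Hence the portion of $\sigma(\tA_\mfs)$ in $\{\Rep \lambda \ge \omega_0/2\}$ is a finite set of eigenvalues, and by \autoref{lem:point spectrum multilayered fluid-structure op} each of them has strictly negative real part. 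Letting $\omega^{\ast} < 0$ denote the maximum of these real parts and setting $\omega \coloneqq \max(\omega_0/2, \omega^{\ast}) < 0$ delivers $\sigma(A_\mfs) = \sigma(\tA_\mfs) \subset \{\Rep \lambda \le \omega\}$.

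The main obstacle is the handling of the essential spectrum: the viscoelastic damping prevents $A_\mfs$ from having compact resolvent, so a direct energy/testing argument as in \autoref{lem:point spectrum multilayered fluid-structure op} does not exhaust the spectrum. The decoupling via $S$ and the relative compactness of $\tB$ with respect to $\tA_\mfs^\rt$ are the decisive ingredients, as they allow a clean identification of $\sigma_\ess(\tA_\mfs)$ with $\sigma_\ess(\tA_\mfs^\rt)$, whose location is controlled by the individual components $A_\fs$ and $A_\rs$.
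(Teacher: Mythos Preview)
Your proposal is correct and follows essentially the same route as the paper: pass to $\tA_\mfs$ by similarity, localize $\sigma(\tA_\mfs^\rt)$ via the factorization \eqref{eq:repr lambda - triang matrix} together with $s(A_\fs)<0$ and $s(A_\rs)<0$, invoke the relative compactness of $\tB$ (already established in the proof of \autoref{prop:cR-sect and max reg up to shift}) to identify $\sigma_\ess(\tA_\mfs)=\sigma_\ess(\tA_\mfs^\rt)$, and then use \autoref{lem:point spectrum multilayered fluid-structure op} for the remaining discrete part. Your closing argument---that sectoriality and the location of $\sigma_\ess$ force only finitely many eigenvalues in the strip $\{\Rep\lambda\ge\omega_0/2\}$---is in fact more explicit than the paper's, which simply records $\sigma(A_\mfs)\subset\{\Rep\lambda<0\}$ and asserts $s(A_\mfs)<0$ directly.
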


\begin{proof}
The main idea is to perform the spectral analysis for the operator $\tA_\mfs$ from \eqref{eq:sim trafo multilayered fs op} on the ground space $\rY_0$ as introduced in \eqref{eq:alt ground space Y_0}.
Note that $\tA_\mfs$ has the same spectrum as $A_\mfs$, because it results from $A_\mfs$ by a similarity transform, i.e., $\sigma(\tA_\mfs) = \sigma(A_\mfs)$.
The same is valid for the point spectrum, so we also have $\sigma_\rp(\tA_\mfs) = \sigma_\rp(A_\mfs)$.
We shall then successively derive a good control of the essential spectrum $\sigma_\ess(\tA_\mfs)$ of $\tA_\mfs$ and combine with the latter observation as well as \autoref{lem:point spectrum multilayered fluid-structure op} to reveal that $\sigma(A_\mfs) = \sigma(\tA_\mfs) = \sigma_\ess(\tA_\mfs) \cup \sigma_{\rd}(\tA_\mfs)$, where $\sigma_{\rd}$ represents the discrete spectrum and has the property that $\sigma_\rd \subset \sigma_\rp$.

First, from \eqref{eq:transformed multilayered fs op}, we recall the transformed multilayered fluid-structure operator 
\begin{equation*}
    \tA_\mfs = \begin{pmatrix}
        A_\fs - B T & B\\
        \tC & T B + A_\rs
    \end{pmatrix}.
\end{equation*}
We now successively study the spectrum of the operator $\tA_\mfs$.
To this end, let us begin with the diagonal operator matrix $\diag(A_\fs,A_\rs)$ with domain $\rD(\tA_\mfs) = \rD(A_\fs) \times \rD(A_\rs)$.
From \autoref{lem:spectrum fluid-structure op} together with \autoref{prop:bdd Hinfty & spectral bound thick structural layer}, it first follows that $\sigma(\diag(A_\fs,A_\rs)) \subset \{\lambda \in \C : \Rep \lambda < 0\}$, so~$s(\diag(A_\fs,A_\rs)) < 0$.
Next, we recall from \eqref{eq:triang matrix} and \eqref{eq:repr lambda - triang matrix} the operator matrix $\tA_\mfs^{\triangle} \colon \rD(\tA_\mfs^{\triangle}) \subset \rY_0 \to \rY_0$ and the representation
\begin{equation*}
    \lambda - \tA_\mfs^{\triangle} = \begin{pmatrix}
        \lambda - A_\fs & 0\\
        -\tC & \lambda - A_\rs
    \end{pmatrix} = \begin{pmatrix}
        \Id & 0\\
        -\tC R(\lambda,A_\fs) & \Id
    \end{pmatrix} \cdot \begin{pmatrix}
        \lambda - A_\fs & 0\\
        0 & \lambda - A_\rs
    \end{pmatrix}, \tfor \lambda \in \rho(A_\fs).
\end{equation*}
For $\lambda \in \rho(\diag(A_\fs,A_\rs))$, we find $\lambda \in \rho(\tA_\mfs^{\triangle})$, so $\sigma(\tA_\mfs^{\triangle}) \subset \{\lambda \in \C : \Rep \lambda < 0\}$ and $s(\tA_\mfs^{\triangle}) < 0$.

With regard to \eqref{eq:transformed multilayered fs op}, it thus remains to discuss $\tB = \begin{pmatrix}
        - B T & B\\
        0 & T B
    \end{pmatrix}$.
In the proof of \autoref{prop:cR-sect and max reg up to shift}, we have already verified that $\tB$ is a relatively compact perturbation of $\tA_\mfs^{\triangle}$.
From \cite[Thm.~5.35]{Kat:95} and the above considerations, we infer that
\begin{equation*}
    \sigma_\ess(\tA_\mfs) = \sigma_\ess(\tA_\mfs^{\triangle}) \subset \sigma(\tA_\mfs^{\triangle}) \subset \{\lambda \in \C : \Rep \lambda < 0\}.
\end{equation*}
On the other hand, thanks to \autoref{lem:point spectrum multilayered fluid-structure op}, we have
\begin{equation*}
    \sigma_\rd(\tA_\mfs) \subset \sigma_\rp(\tA_\mfs) = \sigma_\rp(A_\mfs) \subset \{\lambda \in \C : \Rep \lambda < 0\}.
\end{equation*}
Putting together the previous two inclusions, we find that
\begin{equation*}
    \sigma(A_\mfs) = \sigma(\tA_\mfs) = \sigma_\ess(\tA_\mfs) \cup \sigma_\rd(\tA_\mfs) \subset \{\lambda \in \C : \Rep \lambda < 0\}.
\end{equation*}
This also implies that $s(A_\mfs) < 0$, showing the assertion of the proposition.
\end{proof}

The above proposition also allows us to deduce that $-A_\mfs$ admits maximal $\rL^p$-regularity on $\rX_0$ without shift, and that the associated semigroup is exponentially stable.

\begin{cor}\label{cor:max reg & exp stab of multilayered fs op}
Let $q \in (1,\infty)$, and recall the multilayered fluid-structure operator $A_\mfs$ from \eqref{eq:multilayered fluid-structure op}.
Then $-A_\mfs$ has maximal $\rL^p$-regularity, and the associated semigroup $\mre^{t A_\mfs}$ is exponentially stable, i.e., there exist $\mu_0 > 0$ and $C > 0$ such that $\| \mre^{t A_\mfs} w \|_{\rX_0} \le C \mre^{-\mu_0 t} \cdot \| w \|_{\rX_0}$ for all $w \in \rX_0$.
\end{cor}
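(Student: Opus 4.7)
The plan is to combine the shifted $\cR$-sectoriality from \autoref{prop:cR-sect and max reg up to shift} with the spectral bound from \autoref{prop:spectral theory multilayered fs op} to remove the shift and simultaneously extract exponential decay. The main point is that all the heavy lifting (the decoupling approach, the compact perturbation arguments and the careful spectral analysis) has been carried out in the two preceding propositions, so this corollary is essentially a bookkeeping step.

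First, I would note that \autoref{prop:spectral theory multilayered fs op} gives $\sigma(A_\mfs) \subset \{\lambda \in \C : \Rep \lambda \le \omega\}$ for some $\omega < 0$, so in particular $0 \in \rho(A_\mfs)$ and the resolvent set contains a half-plane $\{\Rep \lambda > \omega\}$. Since $-A_\mfs + \mu$ is $\cR$-sectorial on $\rX_0$ with $\cR$-angle $\phi_{-A_\mfs+\mu}^{\cR} < \nicefrac{\pi}{2}$ by \autoref{prop:cR-sect and max reg up to shift}, the family $\{\lambda R(\lambda,-A_\mfs+\mu) : \lambda \in \Sigma_{\pi-\phi}\}$ is $\cR$-bounded on some sector $\Sigma_{\pi-\phi}$ with $\phi < \nicefrac{\pi}{2}$. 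Shifting back and exploiting that $\sigma(-A_\mfs) \subset \{\Rep \lambda \ge -\omega\}$, outside a bounded set the shifted and unshifted sectors differ negligibly, while on a neighborhood of the origin the resolvent $R(\lambda,-A_\mfs)$ is bounded (hence trivially $\cR$-bounded) by continuity. A standard patching argument (cf.~\cite[Sec.~4.1]{DHP:03}) therefore upgrades $\cR$-sectoriality of $-A_\mfs + \mu$ to $\cR$-sectoriality of $-A_\mfs$ itself, still with $\cR$-angle strictly less than $\nicefrac{\pi}{2}$.

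Second, since $\rX_0$ is a UMD space, \cite[Thm.~4.2]{Wei:01} immediately yields that $-A_\mfs$ has maximal $\rL^p$-regularity on $\R_+$, and in particular on $(0,T)$ for every $T > 0$. For the exponential stability, I would invoke that $\cR$-sectoriality with angle $< \nicefrac{\pi}{2}$ in particular implies sectoriality with angle $< \nicefrac{\pi}{2}$, so $A_\mfs$ generates a bounded analytic semigroup on a sector. Applying the spectral mapping theorem for analytic semigroups, see \cite[Cor.~IV.3.12]{EN:00}, one obtains $\omega_0(\mre^{tA_\mfs}) = s(A_\mfs) \le \omega < 0$. Choosing any $\mu_0 \in (0,-\omega)$, the standard semigroup estimate yields the existence of $C = C(\mu_0) > 0$ with $\|\mre^{tA_\mfs} w\|_{\rX_0} \le C \mre^{-\mu_0 t}\|w\|_{\rX_0}$ for all $w \in \rX_0$ and $t \ge 0$, as claimed.

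The only subtle point worth double-checking is the passage from the shifted to the unshifted $\cR$-sectoriality; this could alternatively be bypassed entirely by arguing directly that maximal $\rL^p$-regularity on $(0,T)$ is stable under bounded additive perturbations (the shift $-\mu \cdot \Id$), which together with the invertibility $0 \in \rho(A_\mfs)$ transfers the maximal regularity estimate from $-A_\mfs + \mu$ to $-A_\mfs$ on finite time intervals; for infinite time intervals one then combines with the exponential stability derived independently from the spectral mapping theorem. Either route is routine and completes the proof.
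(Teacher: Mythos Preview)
Your proposal is correct and matches the paper's intended approach: the paper states this corollary without an explicit proof, treating it as an immediate consequence of \autoref{prop:cR-sect and max reg up to shift} and \autoref{prop:spectral theory multilayered fs op}, and you have filled in precisely the standard details (removing the shift via the spectral information, invoking Weis's theorem, and applying the spectral mapping theorem for analytic semigroups) that justify this.
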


\subsection{Maximal periodic regularity of the multilayered fluid-structure operator}
\ 

In this subsection, we concatenate the findings from the previous subsections in order to deduce the maximal periodic $\rL^p$-regularity of the multilayered fluid-structure operator $A_\mfs$.
Indeed, with regard to the Arendt-Bu theorem as asserted in \autoref{prop:Arendt-Bu thm}, the result below follows from the maximal $\rL^p$-regularity as revealed in \autoref{prop:cR-sect and max reg up to shift} together with the spectral analysis from \autoref{prop:spectral theory multilayered fs op}.

\begin{thm}\label{thm:max per reg multilayered fs op}
Let $q \in (1,\infty)$, and consider the operator $A_\mfs$ as introduced in \eqref{eq:multilayered fluid-structure op}.
Then $-A_\mfs$ has maximal periodic $\rL^p$-regularity on $\rX_0$.
\end{thm}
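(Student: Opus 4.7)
The plan is to apply the Arendt-Bu theorem as formulated in \autoref{prop:Arendt-Bu thm}, rescaled from the interval $(0,2\pi)$ to the general interval $(0,T)$ via a straightforward time substitution (as already noted in the text following that proposition). To invoke the theorem, two ingredients must be checked for the generator $A_\mfs$ on $\rX_0$: first, that $-A_\mfs$ has maximal $\rL^p$-regularity on $\rX_0$; second, that the spectral condition $1 \in \rho(\mre^{T A_\mfs})$ (equivalently, $0 \in \rho(A_\mfs)$) is satisfied.

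For the first ingredient, the strategy is to combine \autoref{prop:cR-sect and max reg up to shift} with \autoref{prop:spectral theory multilayered fs op}. More precisely, \autoref{prop:cR-sect and max reg up to shift} provides $\mu \ge 0$ such that $-A_\mfs + \mu$ is $\cR$-sectorial on the UMD space $\rX_0$ with $\cR$-angle strictly less than $\nicefrac{\pi}{2}$; by Weis's theorem (\cite[Thm.~4.2]{Wei:01}), this already yields maximal $\rL^p$-regularity of $-A_\mfs + \mu$. Since $\rX_0$ is a UMD space, maximal $\rL^p$-regularity is preserved under bounded perturbations, hence the shift can be removed by a standard argument. Alternatively, the statement $s(A_\mfs) < 0$ from \autoref{prop:spectral theory multilayered fs op} guarantees that $0 \in \rho(A_\mfs)$, which combined with the $\cR$-sectoriality of the shifted operator implies that $-A_\mfs$ itself (without shift) is $\cR$-sectorial with angle strictly less than $\nicefrac{\pi}{2}$ on $\rX_0$, hence enjoys maximal $\rL^p$-regularity. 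This is exactly the content of \autoref{cor:max reg & exp stab of multilayered fs op}, which may be cited directly.

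For the second ingredient, I invoke the spectral mapping theorem for generators of analytic semigroups (for instance \cite[Cor.~IV.3.12]{EN:00}). Since maximal $\rL^p$-regularity of $-A_\mfs$ implies that $A_\mfs$ generates an analytic semigroup on $\rX_0$, the spectral mapping theorem applies, and the condition $1 \in \rho(\mre^{T A_\mfs})$ is equivalent to $0 \in \rho(A_\mfs)$. This latter condition is an immediate consequence of \autoref{prop:spectral theory multilayered fs op}, which delivers the stronger statement $s(A_\mfs) < 0$.

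With both hypotheses of the Arendt-Bu theorem verified, the conclusion $-A_\mfs$ has maximal periodic $\rL^p$-regularity on $\rX_0$ follows at once. No genuine obstacle is expected at this stage, since all the heavy lifting---the $\cR$-sectoriality via the decoupling similarity transform, the compact perturbation argument, and the spectral analysis controlling both point and essential spectrum---has been carried out in the preceding propositions; the present theorem is essentially a concatenation of \autoref{prop:cR-sect and max reg up to shift}, \autoref{prop:spectral theory multilayered fs op} and \autoref{prop:Arendt-Bu thm}.
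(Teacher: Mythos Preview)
Your proposal is correct and follows essentially the same approach as the paper: the paper's proof is a one-sentence remark that the result follows from the Arendt--Bu theorem (\autoref{prop:Arendt-Bu thm}) combined with the maximal $\rL^p$-regularity from \autoref{prop:cR-sect and max reg up to shift} and the spectral analysis in \autoref{prop:spectral theory multilayered fs op}. Your write-up simply unpacks these references in more detail, including the reduction of the spectral condition $1 \in \rho(\mre^{T A_\mfs})$ to $0 \in \rho(A_\mfs)$ via the spectral mapping theorem for analytic semigroups, which the paper already noted in the paragraph following \autoref{prop:Arendt-Bu thm}.
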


With regard to the upcoming fixed point argument, we slightly rephrase \autoref{thm:max per reg multilayered fs op} in the sequel so that we obtain an assertion on the linearized problem \eqref{eq:lin per multilayered interaction probl}.
This is done in the corollary below.

\begin{cor}\label{cor:max per reg result multilayered}
Let $f \in \rL^p(0,T;\rL^q(\Omega_\rf)^3) \eqqcolon \E_0^u$, $g \in \rL^p(0,T;\rL^q(\omega)) \eqqcolon \E_0^{\eta}$, $h \in \rL^p(0,T;\rL^q(\Omega_\rs)^3) \eqqcolon \E_0^d$.
Then the linearized problem \eqref{eq:lin per multilayered interaction probl} has a unique solution $v = (u,\pi,\eta_1,\eta_2,d_1,d_2) \in \E_1$, where $\E_1$ was introduced in \eqref{eq:max reg space}, and there exists a constant $C$, which only depends on $p$, $q$ and the geometry of the reference domain, with $\| v \|_{\E_1} \le C\bigl(\| f \|_{\E_0^u} + \| g \|_{\E_0^\eta} + \| h \|_{\E_0^d}\bigr)$.
\end{cor}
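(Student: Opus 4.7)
The plan is to deduce the corollary directly from \autoref{thm:max per reg multilayered fs op} via the equivalent reformulation established in \autoref{prop:equiv reform lin probl}, combined with the explicit reconstruction formulas for $u$ and $\pi$. First, I would recast the data as a single forcing in $\rL^p(0,T;\rX_0)$. The first component $\bP f$ lies in $\rL^p(0,T;\rL^q_\sigma(\Omega_\rf))$ by the boundedness of $\bP$. The middle component $\overline{g}=M_s^{-1}P_\rm g+M_s^{-1}\gamma_\rm N_2 f$ lies in $\rL^p(0,T;\rL^q_\rm(\omega))$ thanks to \autoref{lem:props of added mass op}(a), the continuity of the trace $\gamma_\rm$ in \eqref{eq:mod trace op}, and the mapping properties of the weak Neumann operator $N_2$ in \eqref{eq:lifting op N_2}. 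The thick-layer component $h$ is directly in $\rL^p(0,T;\rL^q(\Omega_\rs)^3)$. Hence \autoref{thm:max per reg multilayered fs op} yields a unique $(\bP u,\eta_1,\eta_2,d_1,d_2)\in \rW^{1,p}(0,T;\rX_0)\cap \rL^p(0,T;\rD(A_\mfs))$ solving the periodic abstract Cauchy problem associated with \eqref{eq:reform in op form multilayered fs op}, along with a norm bound by the data.

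Second, I would reconstruct the missing components using the formulas from \autoref{prop:equiv reform lin probl}: $(\Id-\bP)u=(\Id-\bP)D_\fl\eta_2$ and $\pi=N(\Delta\bP u\cdot\nu)-N_1\eta_2'+N_2 f$. The boundedness of $D_\fl$ provided by \eqref{eq:lifting ops fluid & pressure} together with the domain condition $\bP u-\bP D_\fl\eta_2\in\rD(A_0)$ built into $\rD(A_\mfs)$ gives $u\in \rL^p(0,T;\rW^{2,q}(\Omega_\rf)^3)$; the time regularity $u\in \rW^{1,p}(0,T;\rL^q)$ then follows by combining $\del_t\bP u\in \rL^p$ from the abstract equation with $\del_t\eta_2\in \rL^p(0,T;\rL^q)$. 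For the pressure, the mapping properties of $N$, $N_1$, $N_2$ recalled in \eqref{eq:mapping props of Neumann op} and \eqref{eq:lifting op N_2}, applied respectively to $\Delta\bP u\in \rL^p(0,T;\rL^q)$, $\eta_2'\in \rL^p(0,T;\rL^q_\rm(\omega))$, and $f\in \rL^p(0,T;\rL^q)$, yield $\pi\in \rL^p(0,T;\rW^{1,q}(\Omega_\rf)\cap \rL^q_\rm(\Omega_\rf))$. For the thick layer, setting $\tilde{d}_i=d_i-D_\rs(P_\rm(\eta_i)\mre_3)$ for $i=1,2$, the condition $(\tilde{d}_1,\tilde{d}_2)\in\rD(A_\rs)$ enforces $\tilde{d}_1+\delta\tilde{d}_2\in\rD(L_0)\subset\rW^{2,q}(\Omega_\rs)^3$; adding back the $D_\rs$-lifts of $\eta_1+\delta\eta_2$, which are bounded from $\rW^{2-\nicefrac{1}{q},q}(\omega)$ to $\rW^{2,q}(\Omega_\rs)^3$ by \autoref{lem:solvability stat Lame probl}, yields $d_1+\delta d_2\in \rL^p(0,T;\rW^{2,q}(\Omega_\rs)^3)$, while $d_1\in \rW^{1,p}(0,T;\rW^{1,q})$ and $d_2\in \rW^{1,p}(0,T;\rL^q)\cap \rL^p(0,T;\rW^{1,q})$ are inherited from the abstract solution via $\del_t d_1=d_2$.

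Finally, the estimate $\|v\|_{\E_1}\le C(\|f\|_{\E_0^u}+\|g\|_{\E_0^\eta}+\|h\|_{\E_0^d})$ follows either by tracking the continuity constants through all the bounded operators invoked above, or, more economically, by invoking the closed graph theorem for the data-to-solution map $\E_0^u\times\E_0^\eta\times\E_0^d\to\E_1$, whose well-definedness and injectivity have just been established. The main obstacle here is purely bookkeeping rather than analytic: the five-component structure together with the non-diagonal nature of the multilayered fluid-structure operator forces one to trace regularity carefully through the similarity transform of \eqref{eq:sim trafo multilayered fs op} and through both lifts $D_\fl$ and $D_\rs$; once this is done, the entire analytical content of the corollary is already contained in \autoref{thm:max per reg multilayered fs op}.
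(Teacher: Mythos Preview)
Your approach is essentially identical to the paper's: both reduce to \autoref{thm:max per reg multilayered fs op} after packaging $(\bP f,0,\overline{g},0,h)$ into $\rL^p(0,T;\rX_0)$ via \autoref{lem:props of added mass op}(a) and the mapping properties of $N_2$ and $\gamma_\rm$, and then reconstruct $u$ and $\pi$ from the formulas in \autoref{prop:equiv reform lin probl}. The only point the paper makes explicit that you gloss over is the time regularity of $(\Id-\bP)u$: since $\del_t\eta_2$ is merely in $\rL^p(0,T;\rL_\rm^q(\omega))$, one needs that $D_\fl$ extends to a bounded operator $\rL_\rm^q(\omega)\to\rL^q(\Omega_\rf)^3$ (the paper cites \cite[(4.27)]{MT:21} for this), not just the $\rW_0^{2,q}\to\rW^{2,q}$ boundedness from \eqref{eq:lifting ops fluid & pressure}.
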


\begin{proof}
First, let us show that $\overline{g}$ from \eqref{eq:reform in op form multilayered fs op} satisfies $\overline{g} \in \rL^p(0,T;\rL_\rm^q(\omega))$.
For this purpose, we invoke that the projection $P_\rm$ from \eqref{eq:proj P_m} maps onto $\rL_\rm^q(\omega)$, and by \autoref{lem:props of added mass op}, $M_s$ is an automorphism on~$\rL_\rm^q(\omega)$, so $M_s^{-1} P_\rm g \in \rL^p(0,T;\rL_\rm^q(\omega))$.
Concerning the second addend of $\overline{g}$, we observe that the Neumann operator $N_2$ made precise in \eqref{eq:lifting op N_2} satisfies $N_2 \in \cL(\rL^q(\Omega_\rf)^3,\rW^{1,q}(\Omega_\rf) \cap \rL_\rm^q(\Omega_\rf))$, while the modified trace $\gamma_\rm$ from \eqref{eq:mod trace op} guarantees that $\gamma_\rm N_2 f \in \rL^p(0,T;\rL_\rm^q(\omega))$.
Thus, \autoref{lem:props of added mass op} implies that~$\overline{g} \in \rL^p(0,T;\rL_\rm^q(\omega))$.

By construction, it readily follows that $\bP f \in \rL^p(0,T;\rL_\sigma^q(\Omega_\rf))$.
Therefore, together with the preceding argument, we conclude that $(\bP f,0,\overline{g},0,h) \in \rL^p(0,T;\rX_0)$, where $\rX_0$ is as defined in \eqref{eq:ground space multilayered}.
For the domain~$\rD(A_\mfs)$ of the multilayered fluid-structure operator as introduced in \eqref{eq:dom multilayered fluid-structure op}, it then follows that there exists a unique solution $(\bP u,\eta_1,\eta_2,d_1,d_2) \in \rW^{1,p}(0,T;\rX_0) \cap \rL^p(0,T;\rD(A_\mfs))$ to the time-periodic analogue of \eqref{eq:reform in op form multilayered fs op}.
It remains to argue that $u$ and $\pi$ possess the desired regularities.
To this end, from \autoref{prop:equiv reform lin probl}, we recall $(\Id - \bP)u = (\Id - \bP)D_\fl \eta_2$, where $D_\fl$ has been introduced in \eqref{eq:lifting ops fluid & pressure}.
Moreover, from \autoref{lem:sol stat Stokes probl with inhom bc}, we recall that $D_\fl \in \cL(\rW_0^{2,q}(\omega) \cap \rL_\rm^q(\omega),\rW^{2,q}(\Omega_\rf)^3)$.
On the other hand, arguing as in \cite[(4.27)]{MT:21}, we get $D_\fl \in \cL( \rL_\rm^q(\omega),\rL^q(\Omega_\rf)^3)$.
In total, this reveals that $u \in \E_1^u$.

For the pressure, we invoke $\pi = N(\Delta \bP u \cdot \nu) - N_1 \eta_2' + N_2 f$ from \autoref{prop:equiv reform lin probl}.
From \eqref{eq:mapping props of Neumann op}$_1$ and~\eqref{eq:mapping props of Neumann op}$_2$, we get $N(\Delta \bP u \cdot \nu) \in \rL^p(0,T;\rW^{1,q}(\Omega_\rf) \cap \rL_\rm^q(\Omega_\rf))$ and $N_1 \eta_2' \in \rL^p(0,T;\rW^{1,q}(\Omega_\rf) \cap \rL_\rm^q(\Omega_\rf))$, respectively, while the mapping properties of $N_2$ from \eqref{eq:lifting op N_2} yield that $N_2 f \in \rL^p(0,T;\rW^{1,q}(\Omega_\rf) \cap \rL_\rm^q(\Omega_\rf))$.
In summary, we have verified that $\pi \in \E_1^\pi$.
\end{proof}

Thanks to \autoref{cor:max per reg result multilayered} on the solvability and the associated a priori estimate for the linearized time-periodic problem, it will be sufficient to estimate the nonlinear terms in relatively simple spaces.

\section{Existence of a strong time-periodic solution}\label{sec:ex of strong time per sol}

This section is dedicated to showing the main result on the existence of a time-periodic strong solution to the multilayered fluid-structure interaction problem \eqref{eq:multilayered fsi}.
The first part of this section deals with the nonlinear estimates, while in the second part, we prove \autoref{thm:ex of strong per sol} by means of a fixed point argument.

\subsection{Estimates of the nonlinear terms}\label{ssec:nonlin ests}
\ 

In this subsection, we discuss estimates of terms related to the transform in order to prepare for the nonlinear estimates.
For $R > 0$ and $\E_1$ as introduced in \eqref{eq:max reg space}, we denote by $\oB_{\E_1}(0,R)$ the closed ball in~$\E_1$ with center zero and radius $R > 0$.

We start by establishing estimates of the maximal regularity space in the lemma below.
The assertion of~(a) is a consequence of the mixed derivative theorem, see for example \cite[Thm.~4.5.10]{PS:16}, while the assertion of~(b) follows from \cite[Thm.~III.4.10.2]{Ama:95}.

\begin{lem}\label{lem:embs of the max reg space}
Let $p$, $q \in (1,\infty)$, and recall $\tE_1$ from \eqref{eq:max reg space}.
\begin{enumerate}[(a)]
    \item For all $\theta \in (0,1)$, it holds that
    \begin{equation*}
        \tE_1 \hookrightarrow \rH^{\theta,p}\bigl(0,T;\rH^{2(1-\theta),q}(\Omega_\rf)^3 \times \rH^{2 + 2(1-\theta),q}(\omega) \times \rH^{2(1-\theta),q}(\omega) \times \rH^{1,q}(\Omega_\rs)^3 \times \rH^{1-\theta,q}(\Omega_\rs)^3\bigr).
    \end{equation*}
    \item It holds that
    \begin{equation*}
        \tE_1 
        \hookrightarrow \BUC\bigl([0,T];\rB_{qp}^{2-\nicefrac{2}{p}}(\Omega_\rf)^3 \times \rB_{qp}^{4-\nicefrac{2}{p}}(\omega) \times \rB_{qp}^{2-\nicefrac{2}{p}}(\omega) \times \rW^{1,q}(\Omega_\rs)^3 \times \rB_{qp}^{1-\nicefrac{1}{p}}(\Omega_\rs)^3\bigr).
    \end{equation*}
\end{enumerate}
\end{lem}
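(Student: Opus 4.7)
The plan is to exploit the product structure of $\tE_1$: the defining constraint $d_1 + \delta d_2 \in \rL^p(0,T;\rW^{2,q}(\Omega_\rs)^3)$ plays no role in either of the two claimed embeddings, so one may treat each of the five components $(u,\eta_1,\eta_2,d_1,d_2)$ separately. For four of these five blocks the regularity is of intersection type $\rW^{1,p}(0,T;X_0)\cap \rL^p(0,T;X_1)$, and both (a) and (b) will follow from abstract results applied to such a pair, combined with the standard identifications of complex and real interpolation between the $\rL^q$--Sobolev scale and the Bessel potential and Besov scales, respectively.

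For (a), I would invoke the mixed derivative theorem \cite[Thm.~4.5.10]{PS:16}, which in its UMD form yields
\begin{equation*}
\rW^{1,p}(0,T;X_0)\cap \rL^p(0,T;X_1)\hookrightarrow \rH^{\theta,p}\bigl(0,T;[X_0,X_1]_{1-\theta}\bigr), \qquad \theta\in(0,1).
\end{equation*}
Applied successively with $(X_0,X_1)=(\rL^q(\Omega_\rf)^3,\rW^{2,q}(\Omega_\rf)^3)$ for $u$, $(\rW^{2,q}(\omega),\rW^{4,q}(\omega))$ for $\eta_1$, $(\rL^q(\omega),\rW^{2,q}(\omega))$ for $\eta_2$ and $(\rL^q(\Omega_\rs)^3,\rW^{1,q}(\Omega_\rs)^3)$ for $d_2$, together with the complex interpolation identity $[\rW^{k_0,q},\rW^{k_1,q}]_{1-\theta}=\rH^{k_0\theta+k_1(1-\theta),q}$, this produces the four target exponents $2(1-\theta)$, $2+2(1-\theta)$, $2(1-\theta)$ and $1-\theta$. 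For $d_1$ the inclusion is trivial, since $\E_1^{d_1}=\rW^{1,p}(0,T;\rW^{1,q}(\Omega_\rs)^3)\hookrightarrow \rH^{\theta,p}(0,T;\rW^{1,q}(\Omega_\rs)^3)$.

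For (b), I would apply the trace embedding of Amann \cite[Thm.~III.4.10.2]{Ama:95},
\begin{equation*}
\rW^{1,p}(0,T;X_0)\cap \rL^p(0,T;X_1)\hookrightarrow \BUC\bigl([0,T];(X_0,X_1)_{1-\nicefrac{1}{p},p}\bigr),
\end{equation*}
to the same four pairs $(X_0,X_1)$. The real interpolation identity $(\rW^{k_0,q},\rW^{k_1,q})_{1-\nicefrac{1}{p},p}=\rB_{qp}^{k_0/p+k_1(1-\nicefrac{1}{p})}$ valid on sufficiently regular domains then gives the asserted Besov continuity in time, namely $\rB_{qp}^{2-\nicefrac{2}{p}}$, $\rB_{qp}^{4-\nicefrac{2}{p}}$, $\rB_{qp}^{2-\nicefrac{2}{p}}$ and $\rB_{qp}^{1-\nicefrac{1}{p}}$ for $u$, $\eta_1$, $\eta_2$ and $d_2$, respectively. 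The $d_1$-component satisfies $\E_1^{d_1}=\rW^{1,p}(0,T;\rW^{1,q}(\Omega_\rs)^3)\hookrightarrow \BUC([0,T];\rW^{1,q}(\Omega_\rs)^3)$ by the scalar Sobolev embedding in the time variable (any $p>1$ suffices). Taking the product of the five component embeddings completes the proof.

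There is no genuine obstacle here; the proof is essentially a bookkeeping exercise. The only points requiring minor care are the verification that $\Omega_\rf$, $\Omega_\rs$ and $\omega$ are smooth enough for the interpolation identities between the $\rW^{k,q}$, $\rH^{s,q}$ and $\rB^{s}_{qp}$ scales to hold in the stated form, and the small shift by $\rW^{2,q}(\omega)$ in the $\eta_1$ block, which is handled by reducing to the pair $(\rL^q(\omega),\rW^{2,q}(\omega))$ applied to $\Delta_s \eta_1$ (or, equivalently, by noting that the shift commutes with the interpolation functor).
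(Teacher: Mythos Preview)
Your proposal is correct and follows precisely the same approach as the paper: part~(a) via the mixed derivative theorem \cite[Thm.~4.5.10]{PS:16} and part~(b) via the trace embedding \cite[Thm.~III.4.10.2]{Ama:95}, applied componentwise with the standard interpolation identities. The paper's proof consists only of these two citations, so your write-up is in fact more detailed than the original.
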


In the lemma below, we collect useful estimates of the fluid velocity $u$ and the plate displacement $\eta$.

\begin{lem}\label{lem:ests of plate and fluid vars}
Let $p$, $q \in (1,\infty)$ be such that $\frac{1}{p} + \frac{3}{2q} < \frac{3}{2}$, and consider $v = (u,\pi,\eta_1,\eta_2,d_1,d_2) \in \oB_{\E_1}(0,R)$.
\begin{enumerate}[(a)]
    \item Then there exists $C > 0$ such that $\| \eta_1 \|_{\rL^\infty(0,T;\rC^1(\overline{\omega}))} \le C R$.
    \item There is $C > 0$ with $\| u \|_{\rL^{3p}(0,T;\rL^{3q}(\Omega_\rf))} \le C R$. 
    \item For a constant $C > 0$, it holds that $\| \nabla_s^2 \eta_1 \|_{\rL^{3p}(0,T;\rL^{3q}(\omega))} + \| \eta_2 \|_{\rL^{3p}(0,T;\rL^{3q}(\omega))} \le C R$.
    \item There exists $C > 0$ such that
    \begin{equation*}
        \| \nabla u \|_{\rL^{\nicefrac{3p}{2}}(0,T;\rL^{\nicefrac{3q}{2}}(\Omega_\rf))} + \| \nabla_s^3 \eta_1 \|_{\rL^{\nicefrac{3p}{2}}(0,T;\rL^{\nicefrac{3q}{2}}(\omega))} + \| \nabla_s \eta_2 \|_{\rL^{\nicefrac{3p}{2}}(0,T;\rL^{\nicefrac{3q}{2}}(\omega))} \le C R.
    \end{equation*}
    \item There is $C > 0$ with $\| \nabla u \|_{\rL^p(0,T;\rL^q(\del \Omega_\rf))} + \| u \|_{\rL^{\nicefrac{3p}{2}}(0,T;\rL^{\nicefrac{3q}{2}}(\del \Omega_\rf))} \le C R$.
\end{enumerate}
\end{lem}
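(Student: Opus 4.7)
The plan is to exploit the linearity of $\E_1$: each of the estimates (a)--(e) for an element $v \in \oB_{\E_1}(0,R)$ reduces to a continuous embedding of $\E_1$ (or one of its components) into the target function space, with the embedding norm serving as the constant $C$. The main ingredients are the continuous-in-time embedding from \autoref{lem:embs of the max reg space}(b) for (a), and the mixed-derivative embedding from \autoref{lem:embs of the max reg space}(a) combined with Sobolev inequalities in time and in space for (b)--(e). A key arithmetic observation is that the standing hypothesis $\frac{1}{p} + \frac{3}{2q} < \frac{3}{2}$ is equivalent to $\frac{2}{3p} + \frac{1}{q} < 1$, and this is precisely the condition that guarantees a nonempty range for the interpolation parameter $\theta$ produced by the mixed-derivative theorem when reaching $\rL^{3p} \rL^{3q}$-integrability of $u$.

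For (a), I first apply \autoref{lem:embs of the max reg space}(b) to obtain $\eta_1 \in \BUC([0,T];\rB_{qp}^{4-\nicefrac{2}{p}}(\omega))$, and then apply the Sobolev embedding $\rB_{qp}^{4-\nicefrac{2}{p}}(\omega) \hookrightarrow \rC^1(\overline{\omega})$ on the 2-dimensional set $\omega$, which requires only $4 - \nicefrac{2}{p} - \nicefrac{2}{q} > 1$ and is thus implied (with margin) by the standing assumption.

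For (b)--(d), I use the mixed-derivative theorem as follows. Applied to $u \in \E_1^u$, it yields $u \in \rH^{\theta,p}(0,T;\rH^{2(1-\theta),q}(\Omega_\rf)^3)$ for every $\theta \in (0,1)$. Combining the 1D time-Sobolev embedding $\rH^{\theta,p} \hookrightarrow \rL^{3p}$, valid for $\theta > \nicefrac{2}{3p}$, with the 3D spatial Sobolev embedding $\rH^{2(1-\theta),q}(\Omega_\rf) \hookrightarrow \rL^{3q}(\Omega_\rf)$, valid for $\theta < 1 - \nicefrac{1}{q}$, proves (b), the required inequality between thresholds being exactly the standing hypothesis. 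The argument for (c) is identical after noting that both $\nabla_s^2 \eta_1$ and $\eta_2$ belong to $\rL^p(0,T;\rW^{2,q}(\omega)) \cap \rW^{1,p}(0,T;\rL^q(\omega))$; only the spatial dimension changes from $3$ to $2$, which yields a milder threshold. For (d), I interpolate in the opposite direction, obtaining $\nabla u \in \rH^{1-\theta,p}(0,T;\rH^{2\theta-1,q}(\Omega_\rf))$ for $\theta \in [\nicefrac{1}{2},1]$; the 3D Sobolev embedding into $\rL^{\nicefrac{3q}{2}}$ requires $\theta \ge \nicefrac{1}{2} + \nicefrac{1}{2q}$, and the time Sobolev embedding into $\rL^{\nicefrac{3p}{2}}$ requires $\theta < 1 - \nicefrac{1}{3p}$, so the two thresholds meet again at the standing hypothesis. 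The corresponding estimates for $\nabla_s^3 \eta_1$ and $\nabla_s \eta_2$ follow from the same argument on the 2-dimensional domain $\omega$.

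For (e), the $\rL^p(0,T;\rL^q(\del \Omega_\rf))$-bound on $\nabla u$ is immediate from $\nabla u \in \rL^p(0,T;\rW^{1,q}(\Omega_\rf))$ and the trace chain $\rW^{1,q}(\Omega_\rf) \hookrightarrow \rW^{1-\nicefrac{1}{q},q}(\del \Omega_\rf) \hookrightarrow \rL^q(\del \Omega_\rf)$; no time interpolation is needed. The $\rL^{\nicefrac{3p}{2}}(0,T;\rL^{\nicefrac{3q}{2}}(\del \Omega_\rf))$-estimate on $u$ itself combines the mixed-derivative representation used for (b) with the trace theorem, giving $u|_{\del \Omega_\rf} \in \rH^{\theta,p}(0,T;\rB_{qq}^{2(1-\theta)-\nicefrac{1}{q}}(\del\Omega_\rf))$ for $2(1-\theta) > \nicefrac{1}{q}$, followed by the 2D Sobolev embedding into $\rL^{\nicefrac{3q}{2}}(\del \Omega_\rf)$; the resulting joint condition is strictly weaker than the one used in (b). I do not anticipate a genuine obstacle; the entire argument is careful bookkeeping of Sobolev exponents, and the main point is to confirm that $\frac{1}{p} + \frac{3}{2q} < \frac{3}{2}$ is the sharp joint condition making (b), the $\nabla u$-part of (d), and the nontrivial part of (e) simultaneously available.
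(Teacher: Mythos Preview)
Your proposal is correct and follows essentially the same approach as the paper: part~(a) via \autoref{lem:embs of the max reg space}(b) and the Besov--$\rC^1$ embedding, parts~(b)--(d) via the mixed-derivative embedding of \autoref{lem:embs of the max reg space}(a) combined with Sobolev inequalities in time and space, and part~(e) via the trace theorem together with the estimate already obtained in~(d). The only cosmetic differences are that in~(d) you take the gradient first and then embed, whereas the paper embeds $u$ directly into $\rL^{\nicefrac{3p}{2}}(0,T;\rW^{1,\nicefrac{3q}{2}}(\Omega_\rf))$ (this amounts to the reparametrization $\theta \leftrightarrow 1-\theta$), and for the second term in~(e) the paper simply reuses the $\rW^{1,\nicefrac{3q}{2}}$-bound from~(d) and applies the trace $\rW^{1,\nicefrac{3q}{2}}(\Omega_\rf) \to \rL^{\nicefrac{3q}{2}}(\del\Omega_\rf)$, rather than tracing first and then using the $2$D Sobolev embedding on $\del\Omega_\rf$ as you do.
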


Before proving the preceding lemma, we comment on an important consequence of the assertion in~(a).

\begin{rem}\label{rem:smallness of eta and diffeo}
By choosing $R > 0$ sufficiently small, we can ensure that $\eta = \eta_1$ satisfies the smallness condition \eqref{eq:smallness ass displacement} for $\delta_0$ made precise in \eqref{eq:concr bound for plate displacement}.
By the discussion in \autoref{sec:trafo to fixed dom}, this implies that~$X(t,\cdot)$ is a~$\rC^1$-diffeomorphism from $\Omega_\rf$ onto $\Omega_\rf(t)$ for all $t \in [0,T]$.
\end{rem}

\begin{proof}[Proof of \autoref{lem:ests of plate and fluid vars}]
The assertion of~(a) is a consequence of \autoref{lem:embs of the max reg space}(b) and $\rB_{qp}^{4-\nicefrac{2}{p}}(\omega) \hookrightarrow \rC^1(\overline{\omega})$, following from Sobolev embeddings, see for instance \cite[Thm.~4.6.1]{Tri:78}, thanks to the assumptions on $p$ and~$q$ that especially imply $\frac{1}{p} + \frac{1}{q} < \frac{3}{2}$, or, equivalently, $4 - \frac{2}{p} - \frac{2}{q} > 1$.

For~(b), we observe that by Sobolev embeddings, $\rH^{\theta,p}(0,T;\rH^{2(1-\theta),q}(\Omega_\rf)^3) \hookrightarrow \rL^{3p}(0,T;\rL^{3q}(\Omega_\rf)^3)$ requires that $\theta - \frac{1}{p} > -\frac{1}{3p}$ and $2(1-\theta) - \frac{3}{q} > -\frac{3}{3q}$.
Straightforward calculations reveal that such $\theta \in (0,1)$ can exist provided $\frac{1}{p} + \frac{3}{2q} < \frac{3}{2}$, so the assertion is implied by \autoref{lem:embs of the max reg space}(a).
Let us observe that the assertion of~(c) can be deduced in an analogous manner upon invoking \autoref{lem:embs of the max reg space}(a).

With regard to~(d), we first observe that $\| \nabla u \|_{\rL^{\nicefrac{3p}{2}}(0,T;\rL^{\nicefrac{3q}{2}}(\Omega_\rf))} \le \| u \|_{\rL^{\nicefrac{3p}{2}}(0,T;\rW^{1,\nicefrac{3q}{2}}(\Omega_\rf))}$.
Thus, we need to make sure that $\rH^{\theta,p}(0,T;\rH^{2(1-\theta),q}(\Omega_\rf)^3) \hookrightarrow \rL^{\nicefrac{3p}{2}}(0,T;\rW^{1,\nicefrac{3q}{2}}(\Omega_\rf)^3)$, i.e., by Sobolev embeddings, we need that $\theta - \frac{1}{p} > -\frac{2}{3p}$ and $2(1-\theta) - \frac{3}{q} > 1 - \frac{2}{q}$.
The existence of such $\theta \in (0,1)$ can then be guaranteed provided $\frac{1}{p} + \frac{3}{2q} < \frac{3}{2}$ is satisfied.
As a result, we conclude the estimate of the first term from \autoref{lem:embs of the max reg space}(a).
The estimates of the other two factors are again implied in the same manner.

For the first term in~(e), we make use of classical properties of the trace as well as the definition of the maximal regularity space $\E_1$ to argue that for $\eps > 0$ sufficiently small, we get
\begin{equation*}
    \| \nabla u \|_{\rL^p(0,T;\rL^q(\del \Omega_\rf))} \le C \cdot \| u \|_{\rL^p(0,T;\rW^{1+\nicefrac{1}{q}+\eps,q}(\Omega_\rf))} \le C \cdot \| u \|_{\rL^p(0,T;\rW^{2,q}(\Omega_\rf))} \le C \cdot \| v \|_{\E_1} \le C R.
\end{equation*}
The estimate of the second term is a consequence of the proof of the estimate of the first term in~(d) upon noting that the trace is in particular continuous from $\rW^{1,\nicefrac{3q}{2},q}(\Omega_\rf)^3$ to $\rL^{\nicefrac{3q}{2}}(\del\Omega_\rf)^3$.
\end{proof}

The last preparatory step before tackling the estimates of the nonlinear terms is to show suitable estimates of the transform.
These estimates can be derived in a similar way as in \cite[Prop.~5.2 and Prop.~6.2]{MT:21} upon employing \autoref{lem:ests of plate and fluid vars}(a) and the relation $a(X) = \frac{\nabla X}{\det(\nabla X)}$.

\begin{lem}\label{lem:est of terms related to the trafo}
Consider $p$, $q \in (1,\infty)$ with $\frac{1}{p} + \frac{3}{2q} < \frac{3}{2}$ as well as $v = (u,\pi,\eta_1,\eta_2,d_1,d_2) \in \oB_{\E_1}(0,R)$, and recall the diffeomorphism $X$, its inverse $Y$, $a=\Cof\left(\nabla Y\right)^\top$ and $b=\Cof\left(\nabla X\right)^\top$.
Then
\begin{enumerate}[(a)]
    \item there is $C > 0$ such that
    \begin{equation*}
        \| \nabla X - I_3 \|_{\rL^\infty(0,T;\rC(\overline{\Omega_\rf}))} + \| \nabla Y - I_3 \|_{\rL^\infty(0,T;\rC(\overline{\Omega_\rf}))} + \| a - I_3 \|_{\rL^\infty(0,T;\rC(\overline{\Omega_\rf}))} + \| b - I_3 \|_{\rL^\infty(0,T;\rC(\overline{\Omega_\rf}))} \le C R,
    \end{equation*}
    \item there exists $C > 0$ with $\| \det(\nabla X) - 1 \|_{\rL^\infty(0,T;\rC(\overline{\Omega_\rf}))} \le C R$ and $\frac{1}{2} \le \| \det \nabla X \|_{\rL^\infty(0,T;\rC(\overline{\Omega_\rf}))} \le \frac{3}{2}$,
    \item for some constant $C > 0$, it holds that
    \begin{equation*}
        \| \nabla X \|_{\rL^\infty(0,T;\rC(\overline{\Omega_\rf}))} + \| \nabla Y \|_{\rL^\infty(0,T;\rC(\overline{\Omega_\rf}))} + \| a \|_{\rL^\infty(0,T;\rC(\overline{\Omega_\rf}))} + \| b \|_{\rL^\infty(0,T;\rC(\overline{\Omega_\rf}))} \le C, \tand
    \end{equation*}
    \item there is $C > 0$ such that for all $i$, $j$, $k$, $l \in \{1,2,3\}$, we have
    \begin{equation*}
        \begin{aligned}
            \left|\frac{\del a_{ik}}{\del x_j}(X)\right| 
            &\le C \cdot |\nabla^2 X| \le C \bigl(|\eta_1| + |\nabla_s \eta_1| + |\nabla_s^2 \eta_1|\bigr),\\
            \left|\frac{\del^2 a_{ik}}{\del x_j^2}(X)\right| 
            &\le C\bigl(|\nabla X|^2 + |\nabla^3 X|\bigr) \le C\Bigl(\bigl(|\eta_1| + |\nabla_s \eta_1| + |\nabla_s^2 \eta_1|\bigr)^2 + |\nabla_s^3 \eta_1|\Bigr),\\
            |\del_t a(X)|
            &\le C\bigl(|\nabla^2 X| + |\nabla \del_t X|\bigr) \le C\bigl(|\eta_1| + |\nabla_s \eta_1| + |\nabla_s^2 \eta_1| + |\nabla_s \eta_2|\bigr),\\
            |\del_t Y(X)|
            &\le C \cdot |\del_t X| \le C \cdot |\eta_2|,\\
            \left|\frac{\del^2 Y_l}{\del x_j^2}(X)\right| 
            &\le C \cdot |\nabla^2 X| \le C\bigl(|\eta_1| + |\nabla_s \eta_1| + |\nabla_s^2 \eta_1|\bigr).
        \end{aligned}
    \end{equation*}
\end{enumerate}
\end{lem}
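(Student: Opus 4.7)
The plan is to exploit the explicit formula \eqref{eq:diffeo X} for $X=X_\eta$ together with the smallness assertion of \autoref{lem:ests of plate and fluid vars}(a), which furnishes $\| \eta_1 \|_{\rL^\infty(0,T;\rC^1(\overline{\omega}))} \le C R$. From \eqref{eq:diffeo X}, one reads off directly
\begin{equation*}
    \nabla X - I_3 = \begin{pmatrix} 0 & 0 & 0 \\ 0 & 0 & 0 \\ \psi(y_3)\del_{y_1}\eta_1 & \psi(y_3)\del_{y_2}\eta_1 & \psi'(y_3)\eta_1 \end{pmatrix},
\end{equation*}
so that $\| \nabla X - I_3 \|_{\rL^\infty(0,T;\rC(\overline{\Omega_\rf}))} \le C\bigl(\|\psi\|_{\rL^\infty(\R)}+\|\psi'\|_{\rL^\infty(\R)}\bigr)\| \eta_1 \|_{\rL^\infty(0,T;\rC^1(\overline{\omega}))} \le C R$, and similarly $|\det\nabla X - 1| = |\psi'(y_3) \eta_1(y_1,y_2)| \le C R$. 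Choosing $R$ sufficiently small then yields~(b) together with pointwise invertibility of $\nabla X$.

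For the remaining quantities in (a), I would use the Neumann series to bound $(\nabla X)^{-1} - I_3$ by $CR$, which via the identity $\nabla Y(X(y)) = (\nabla X(y))^{-1}$ transfers smallness from $\nabla X$ to $\nabla Y$. Since $a = \Cof(\nabla Y)^\top$ and $b = \Cof(\nabla X)^\top$ are polynomial expressions in the entries of $\nabla Y$ and $\nabla X$, respectively, with value $I_3$ at $I_3$, both $a-I_3$ and $b-I_3$ inherit the same $O(R)$ bound. Claim~(c) is then an immediate consequence of (a) and (b) combined with the triangle inequality.

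For (d), the central ingredient is the identity noted after \autoref{lem:ests of plate and fluid vars},
\begin{equation*}
    a(X(y)) = \frac{1}{\det(\nabla X(y))}\nabla X(y),
\end{equation*}
obtained from $\Cof(M^{-1})^\top = (\det M)^{-1} M$. Differentiating this identity in $y$ (respectively in $t$) produces $\frac{\del a_{ik}}{\del x_j}(X)$, $\frac{\del^2 a_{ik}}{\del x_j^2}(X)$ and $\del_t a(X)$ as polynomial combinations, with denominators bounded below by $\nicefrac{1}{2}$ owing to (b), of the entries of $\nabla X$, $\nabla^2 X$, $\nabla^3 X$ and $\del_t X$. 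Since the explicit formula \eqref{eq:diffeo X} gives $\del_t X = \psi(y_3)\eta_2\, \mre_3$ and expresses all higher derivatives of $X$ as polynomial combinations of $\psi, \psi', \psi'', \psi'''$ evaluated at $y_3$ with the quantities $\eta_1, \nabla_s\eta_1, \nabla_s^2\eta_1, \nabla_s^3\eta_1, \eta_2, \nabla_s\eta_2$, the stated pointwise bounds follow after regrouping, using the uniform control from~(c) to absorb the factors involving $\nabla X$. The estimates for $\frac{\del^2 Y_l}{\del x_j^2}(X)$ and $\del_t Y(X)$ are handled in the same spirit by differentiating $\nabla Y(X) = (\nabla X)^{-1}$ and the relation $Y(t,X(t,y))=y$ in $t$, which yields $\del_t Y(X) = -(\nabla X)^{-1}\del_t X$.

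No single step is really delicate; the proof is a chain of routine chain-rule manipulations. The only point that demands minor care is choosing $R$ small enough so that the Neumann series for $(\nabla X)^{-1}$ converges uniformly in $(t,y)$, which is automatic once $\| \eta_1 \|_{\rL^\infty(0,T;\rC^1(\overline\omega))}$ lies below an explicit threshold depending only on $\|\psi\|_{W^{1,\infty}}$, consistent with the smallness \eqref{eq:smallness ass displacement}--\eqref{eq:concr bound for plate displacement} built into the change of variables.
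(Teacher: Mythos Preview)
Your proposal is correct and follows precisely the route indicated in the paper: the paper does not spell out a proof but refers to \cite[Prop.~5.2 and Prop.~6.2]{MT:21}, noting that the estimates follow from \autoref{lem:ests of plate and fluid vars}(a) and the relation $a(X) = \frac{\nabla X}{\det(\nabla X)}$, which is exactly the identity you exploit. Your explicit computation of $\nabla X - I_3$, the Neumann series for $(\nabla X)^{-1}$, and the chain-rule differentiations of $a(X)$ and $Y(t,X(t,y))=y$ are the standard way to unpack that reference.
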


We are now in the position to estimate the nonlinear terms $F$ and $G$ from \autoref{sec:trafo to fixed dom}.

\begin{prop}\label{prop:self-map ests of F & G}
Let $p$, $q \in (1,\infty)$ with $\frac{1}{p} + \frac{3}{2q} < \frac{3}{2}$, and consider $v = (u,\pi,\eta_1,\eta_2,d_1,d_2) \in \oB_{\E_1}(0,R)$.
Then there exists $C > 0$ such that for the terms $F$ and $G$ as introduced in \eqref{eq:RHS F} and \eqref{eq:RHS G}, respectively, it holds that
\begin{equation*}
    \| F(u,\pi,\eta_1,\eta_2) \|_{\E_0^u} + \| G(u,\pi,\eta_1,\eta_2) \|_{\E_0^\eta} \le C R^2.
\end{equation*}
\end{prop}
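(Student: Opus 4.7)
The plan is to treat each of the twelve summands in the definition of $F$ (see \eqref{eq:RHS F}) and the four structural summands in $G$ (see \eqref{eq:RHS G}) separately, and to bound each as a product of at least two factors via H\"older's inequality in space-time, one factor being controlled by \autoref{lem:ests of plate and fluid vars} and the other by the transform estimates in \autoref{lem:est of terms related to the trafo}. The condition $\frac{1}{p} + \frac{3}{2q} < \frac{3}{2}$ is precisely what enters \autoref{lem:ests of plate and fluid vars}(b)--(e), so the mixed Bochner exponents $(3p,3q)$, $(\nicefrac{3p}{2},\nicefrac{3q}{2})$ available there are exactly the ones needed to close the estimates at the $\rL^p(\rL^q)$ level, since $\frac{1}{3p} + \frac{2}{3p} = \frac{1}{p}$ and $\frac{1}{3q} + \frac{2}{3q} = \frac{1}{q}$.

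The recipe for $F$ is the following. For the lower order terms (the first four summands of \eqref{eq:RHS F} and the $\del_t$-terms at the end), one uses \autoref{lem:est of terms related to the trafo}(d) to bound the derivatives of $a$, $Y$ and $\del_t a(X)$, $\del_t Y(X)$ pointwise by $|\eta_1| + |\nabla_s\eta_1| + |\nabla_s^2\eta_1| + |\nabla_s^3\eta_1| + |\eta_2| + |\nabla_s\eta_2|$; the $\rL^\infty$-terms $|\eta_1|$ and $|\nabla_s\eta_1|$ are absorbed via \autoref{lem:ests of plate and fluid vars}(a) and contribute a factor $CR$, whereas $\nabla_s^2\eta_1$ or $\eta_2$ in $\rL^{3p}(\rL^{3q})$ is paired with $u$ or $\nabla u$ in $\rL^{3p}(\rL^{3q})$ or $\rL^{\nicefrac{3p}{2}}(\rL^{\nicefrac{3q}{2}})$ respectively, all bounded by $CR$. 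For the $\nabla^2 u$-summand one exploits that $\nabla Y(X) \nabla Y(X) - \mathrm{Id}$ and $\det(\nabla X)\nabla Y(X)\nabla Y(X) - \mathrm{Id}$ are $\rL^\infty(\rL^\infty)$-small of order $R$ by \autoref{lem:est of terms related to the trafo}(a)--(b), and then pairs them with $\nabla^2 u \in \E_1^u \hookrightarrow \rL^p(\rL^q)$; an identical argument treats the pressure summand, with $\nabla\pi\in\rL^p(\rL^q)$ replacing $\nabla^2 u$. The convective term $\frac{1}{\det(\nabla X)}[(u\cdot\nabla)u]_\alpha$ is handled by \autoref{lem:est of terms related to the trafo}(b) for the prefactor together with $\|u\|_{\rL^{3p}\rL^{3q}}\|\nabla u\|_{\rL^{\nicefrac{3p}{2}}\rL^{\nicefrac{3q}{2}}} \le CR^2$ via \autoref{lem:ests of plate and fluid vars}(b),(d). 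The cubic summand $b\,\nabla a(X)\,a(X)\,u\otimes u$ is treated by first putting the factor $(|\eta_1|+|\nabla_s\eta_1|)|u|^2$ in $\rL^\infty(\rL^\infty) \cdot \rL^p(\rL^q) \subset \rL^p(\rL^q)$ via \autoref{lem:ests of plate and fluid vars}(a) and the embedding $\rL^{3p}(\rL^{3q}) \hookrightarrow \rL^{2p}(\rL^{2q})$ (which is valid since $T<\infty$ and $\Omega_\rf$ is bounded), and the remainder $|\nabla_s^2\eta_1|\,|u|^2 \in \rL^{3p}(\rL^{3q})\cdot\rL^{3p}(\rL^{3q})\cdot\rL^{3p}(\rL^{3q}) \subset \rL^p(\rL^q)$; each factor is of order~$R$.

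The term $G$ is treated analogously, except that all factors are now evaluated on the two-dimensional boundary piece $\{(s,0):s\in\omega\}$. The crucial substitution is to replace the space-time integrability bounds on $u$, $\nabla u$ by their trace counterparts in \autoref{lem:ests of plate and fluid vars}(e), i.e.\ $\|\nabla u\|_{\rL^p(\rL^q(\del\Omega_\rf))} + \|u\|_{\rL^{\nicefrac{3p}{2}}(\rL^{\nicefrac{3q}{2}}(\del\Omega_\rf))} \le CR$. For instance, the summand $(a(X)\nabla Y(X) - \mathrm{Id})\nabla u$ is bounded by $\|a\nabla Y - \mathrm{Id}\|_{\rL^\infty(\rL^\infty)}\|\nabla u\|_{\rL^p(\rL^q(\del\Omega_\rf))} \le CR^2$ via \autoref{lem:est of terms related to the trafo}(a), while typical mixed summands such as $\del_{s_i}\eta\,\nabla a(X)\,u|_{\Gamma}$ are bounded using the triple product $\|\del_s\eta_1\|_{\rL^\infty\rL^\infty} \cdot \|\nabla_s^2\eta_1\|_{\rL^{3p}(\rL^{3q}(\omega))} \cdot \|u\|_{\rL^{\nicefrac{3p}{2}}(\rL^{\nicefrac{3q}{2}}(\del\Omega_\rf))} \le CR^3$; an analogous splitting treats the summand with $\del_3 a(X)$ in place of $\del_s\eta\,\nabla a(X)$.

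Summing the contributions of all summands, collecting powers of $R$, and recalling that we may absorb any factor $R^k$ with $k\ge 2$ into a single $R^2$-bound by restricting to $R\le 1$ (or by relabelling the constant $C$ on $\oB_{\E_1}(0,R)$ for any fixed $R_1>0$), we obtain the desired estimate $\|F\|_{\E_0^u} + \|G\|_{\E_0^\eta} \le CR^2$. The main obstacle is purely organisational, namely the sheer number of summands in $F$ and~$G$ and the need, for each of them, to identify a splitting of the three integrability exponents (time, space, and for $G$ the trace in normal direction) that lands in $\rL^p(\rL^q)$; the condition $\frac{1}{p} + \frac{3}{2q} < \frac{3}{2}$ is precisely what guarantees the existence of such a splitting through the embeddings of \autoref{lem:embs of the max reg space}.
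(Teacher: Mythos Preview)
Your proposal is correct and follows essentially the same approach as the paper's proof: treat each summand of $F$ and $G$ separately via H\"older's inequality, using the $\rL^{3p}(\rL^{3q})$ and $\rL^{\nicefrac{3p}{2}}(\rL^{\nicefrac{3q}{2}})$ bounds from \autoref{lem:ests of plate and fluid vars} for $u$, $\nabla u$, $\nabla_s^2\eta_1$, $\eta_2$ and their traces, combined with the pointwise and $\rL^\infty$-smallness estimates on $a$, $b$, $\nabla Y$, $\det(\nabla X)$ and their derivatives from \autoref{lem:est of terms related to the trafo}. The only (inessential) difference is that for the cubic term $b\,\nabla a(X)\,a(X)\,u\otimes u$ the paper directly pairs $\nabla a(X)\in\rL^{3p}(\rL^{3q})$ with $u\otimes u\in\rL^{\nicefrac{3p}{2}}(\rL^{\nicefrac{3q}{2}})$ to obtain $CR^3$, whereas you split off the $\rL^\infty$-contributions of $\eta_1,\nabla_s\eta_1$ first; both routes close.
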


\begin{proof}
Let us start with $F$.
We treat the addends separately.
For the first addend $b_{\alpha i}\frac{\partial^{2}a_{ik}}{\partial x_{j}^{2}}\left(X\right)u_{k}$, we use H\"older's inequality and invoke the boundedness of $b$ in $\rL^\infty(0,T;\rL^\infty(\Omega_\rf))$ by \autoref{lem:est of terms related to the trafo}(c).
Moreover, we employ the estimates of $\eta_1$, $\nabla_s \eta_1$ and $\nabla_s^2 \eta_1$ in $\rL^{3p}(0,T;\rL^{3q}(\omega))$ by $C R$ by \autoref{lem:ests of plate and fluid vars}(c) as well as the estimate of $\nabla_s^3 \eta_1$ in $\rL^{\nicefrac{3p}{2}}(0,T;\rL^{\nicefrac{3q}{2}}(\omega))$ by $C R$ by \autoref{lem:ests of plate and fluid vars}(d) to deduce from \autoref{lem:est of terms related to the trafo}(d) that
\begin{equation*}
    \left\| \frac{\del^2 a_{ik}}{\del x_j^2}(X) \right\|_{\rL^{\nicefrac{3p}{2}}(0,T;\rL^{\nicefrac{3q}{2}}(\omega))} \le C R.
\end{equation*}
Together with the estimate of $u$ in $\rL^{3p}(0,T;\rL^{3q}(\Omega_\rf))$ by $C R$ by \autoref{lem:ests of plate and fluid vars}(b), we conclude that
\begin{equation*}
    \left\| b_{\alpha i}\frac{\partial^{2}a_{ik}}{\partial x_{j}^{2}}\left(X\right)u_{k} \right\|_{\rL^p(0,T;\rL^q(\Omega_\rf))} \le C R^2.
\end{equation*}

In the next addend $b_{\alpha i}\frac{\partial a_{ik}}{\partial x_{j}}\left(X\right)\frac{\partial u_{k}}{\partial y_{l}}\frac{\partial Y_{l}}{\partial x_{j}}\left(X\right)$, we can bound the factor $b_{\alpha i}$ in the same way, and by \autoref{lem:est of terms related to the trafo}(c), we can also bound $\frac{\partial Y_{l}}{\partial x_{j}}$ in $\rL^\infty(0,T;\rL^\infty(\Omega_\rf))$.
On the other hand, thanks to \autoref{lem:est of terms related to the trafo}(d) combined with the estimates of $\eta_1$, $\nabla_s \eta_1$ and $\nabla_s^2 \eta_1$ in $\rL^{3p}(0,T;\rL^{3q}(\omega))$ by $C R$ by \autoref{lem:ests of plate and fluid vars}(c), we get
\begin{equation}\label{eq:est of der of a}
    \left\| \frac{\partial a_{ik}}{\partial x_{j}}\left(X\right) \right\|_{\rL^{3p}(0,T;\rL^{3q}(\Omega_\rf))} \le C R.
\end{equation}
Additionally using the estimate of $\nabla u$ in $\rL^{\nicefrac{3p}{2}}(0,T;\rL^{\nicefrac{3q}{2}}(\Omega_\rf))$ by $C R$ thanks to \autoref{lem:ests of plate and fluid vars}(d), and using H\"older's inequality, we find that
\begin{equation*}
    \left\| b_{\alpha i}\frac{\partial a_{ik}}{\partial x_{j}}\left(X\right)\frac{\partial u_{k}}{\partial y_{l}}\frac{\partial Y_{l}}{\partial x_{j}}\left(X\right) \right\|_{\rL^p(0,T;\rL^q(\Omega_\rf))} \le C R^2.
\end{equation*}

With regard to the subsequent addend of the form $\frac{\partial^{2}u_{\alpha}}{\partial y_{l}\partial y_{m}}(\frac{\partial Y_{l}}{\partial x_{j}}\left(X\right)\frac{\partial Y_{m}}{\partial x_{j}}\left(X\right)-\delta_{l,j}\delta_{m,j})$, we exploit \autoref{lem:est of terms related to the trafo}(a) and~(c) to get
\begin{equation}\label{eq:diff of Y terms}
    \begin{aligned}
        &\quad \left\| \frac{\partial Y_{l}}{\partial x_{j}}\left(X\right)\frac{\partial Y_{m}}{\partial x_{j}}\left(X\right)-\delta_{l,j}\delta_{m,j} \right\|_{\rL^\infty(0,T;\rL^\infty(\Omega_\rf))}\\
        &\le \left\| \left(\frac{\del Y_l}{\del x_j} - \delta_{lj}\right) \frac{\del Y_m}{\del x_j}\right\|_{\rL^\infty(0,T;\rL^\infty(\Omega_\rf))} + \left\| \delta_{lj} \left(\frac{\del Y_m}{\del x_j} - \delta_{mj}\right)\right\|_{\rL^\infty(0,T;\rL^\infty(\Omega_\rf))} \le C R.
    \end{aligned}
\end{equation}
Thus, by H\"older's inequality and a direct estimate of $\frac{\partial^{2}u_{\alpha}}{\partial y_{l}\partial y_{m}}$ by $\| u \|_{\E_1^u}$, we infer that
\begin{equation*}
    \left\| \frac{\partial^{2}u_{\alpha}}{\partial y_{l}\partial y_{m}}\left(\frac{\partial Y_{l}}{\partial x_{j}}\left(X\right)\frac{\partial Y_{m}}{\partial x_{j}}\left(X\right)-\delta_{l,j}\delta_{m,j}\right) \right\|_{\rL^p(0,T;\rL^q(\Omega_\rf))} \le C R^2.
\end{equation*}

In a similar manner, employing \autoref{lem:ests of plate and fluid vars}, \autoref{lem:est of terms related to the trafo}, \eqref{eq:est of der of a} and \eqref{eq:diff of Y terms}, we find that
\begin{equation*}
    \left\| b_{\alpha i}\frac{\partial a_{ik}}{\partial x_{j}}\left(X\right)a_{jm}\left(X\right)u_{k}u_{m} \right\|_{\rL^p(0,T;\rL^q(\Omega_\rf))} \le C R^3, \tand\\
\end{equation*}
\begin{equation*}
    \begin{aligned}
        &\left\| \frac{\partial u_{\alpha}}{\partial y_{l}}\frac{\partial^{2}Y_{l}}{\partial x_{j}^{2}}\left(X\right) \right\|_{\rL^p(0,T;\rL^q(\Omega_\rf))} + 
        \left\| \frac{\partial\pi}{\partial y_{k}}\det\left(\nabla X\right)\left(\frac{\partial Y_{\alpha}}{\partial x_{i}}\left(X\right)\frac{\partial Y_{k}}{\partial x_{i}}\left(X\right)-\delta_{\alpha,i}\delta_{k,i}\right) \right\|_{\rL^p(0,T;\rL^q(\Omega_\rf))}\\
        &+\left\| \frac{1}{\det\left(\nabla X\right)}\left[\left(u\cdot\nabla\right)u\right]_{\alpha} \right\|_{\rL^p(0,T;\rL^q(\Omega_\rf))} + \left\| \left[b\left(\partial_{t}a\right)\left(X\right)u\right]_{\alpha} \right\|_{\rL^p(0,T;\rL^q(\Omega_\rf))}\\
        &+ \left\| \left[\left(\nabla u\right)\left(\partial_{t}Y\right)\left(X\right)\right]_{\alpha} \right\|_{\rL^p(0,T;\rL^q(\Omega_\rf))} 
        \le C R^2.
    \end{aligned}
\end{equation*}
Concatenating the previous estimates, we deduce that $\| F(u,\pi,\eta_1,\eta_2) \|_{\E_0^u} \le C R^2$ for $C > 0$.

The second part of the proof is dedicated to the estimate of the other nonlinear term $G(u,\pi,\eta_1,\eta_2)$.
Here, for the first term $\partial_{s_{i}}\eta_1[\frac{\partial a_{ik}}{\partial x_{3}}\left(X\right)+\frac{\partial a_{3k}}{\partial x_{i}}\left(X\right)] u_k$, we employ \autoref{lem:ests of plate and fluid vars}(a) to estimate the first factor in $\rL^{\infty}(0,T;\rL^{\infty}(\omega))$ by $ C R$, estimate the factor in the middle by $C R$ upon observing that \eqref{eq:est of der of a} also holds when the norm is replaced by $\| \cdot \|_{\rL^{3p}(0,T;\rL^{3q}(\omega))}$, since the terms appearing in the estimate in \autoref{lem:est of terms related to the trafo}(d) are functions defined on $\omega$, and use \autoref{lem:ests of plate and fluid vars}(e) to estimate $u_k$, which is defined on a part of the fluid boundary, in $\rL^{\nicefrac{3p}{2}}(0,T;\rL^{\nicefrac{3q}{2}}(\del \Omega_\rf))$ by $C R$.
Thus, from H\"older's inequality, it follows that
\begin{equation*}
    \left\| \partial_{s_{i}}\eta_1\left[\frac{\partial a_{ik}}{\partial x_{3}}\left(X\right)+\frac{\partial a_{3k}}{\partial x_{i}}\left(X\right)\right] u_k \right\|_{\rL^p(0,T;\rL^q(\omega))} \le C R^3.
\end{equation*}
Likewise, we get $\| 2\frac{\partial a_{3k}}{\partial x_{3}}\left(X\right)u_{k} \|_{\rL^p(0,T;\rL^q(\omega))} \le C R^2$.
For $\partial_{s_{i}}\eta [a_{ik}\left(X\right)\frac{\partial Y_{l}}{\partial x_{3}}\left(X\right)+a_{3k}\left(X\right)\frac{\partial Y_{l}}{\partial x_{i}}\left(X\right)] \frac{\partial u_{k}}{\partial y_{l}}$, we use the same argument as before to estimate the first factor by $C R$, and we employ \autoref{lem:est of terms related to the trafo}(c) to bound the sum in the middle.
By virtue of \autoref{lem:ests of plate and fluid vars}(e), which enables an estimate of $\nabla u$ in $\rL^p(0,T;\rL^q(\del \Omega_\rf))$ by $C R$, we derive from H\"older's inequality that
\begin{equation*}
    \left\| \partial_{s_{i}}\eta\left[a_{ik}\left(X\right)\frac{\partial Y_{l}}{\partial x_{3}}\left(X\right)+a_{3k}\left(X\right)\frac{\partial Y_{l}}{\partial x_{i}}\left(X\right)\right] \frac{\partial u_{k}}{\partial y_{l}} \right\|_{\rL^p(0,T;\rL^q(\omega))} \le C R^2.
\end{equation*}
For the last term, we use \autoref{lem:est of terms related to the trafo}(a) and~(c) to get $\bigl\| a_{3k}(X)\frac{\partial Y_{l}}{\partial x_{3}}(X)-\delta_{3,k}\delta_{3,l}\bigr\|_{\rL^\infty(0,T;\rL^\infty(\omega))} \le CR$.
Thus, by the same estimate of $\nabla u$ as for the former term, joint with H\"older's inequality, it follows that
\begin{equation*}
    \left\| \left(a_{3k}\left(X\right)\frac{\partial Y_{l}}{\partial x_{3}}\left(X\right)-\delta_{3,k}\delta_{3,l}\right)\frac{\partial u_{k}}{\partial y_{l}} \right\|_{\rL^p(0,T;\rL^q(\omega))} \le C R^2.
\end{equation*}
Putting together the estimates of the four terms, we conclude that $G(u,\pi,\eta_1,\eta_2)$ can be estimated in~$\E_0^\eta = \rL^p(0,T;\rL^q(\omega))$ by $C R^2$, completing the proof of the proposition.
\end{proof}

The following proposition is used to establish the contraction property of the solution map to the linearized problem.
It can be proved in the same way as \autoref{prop:self-map ests of F & G}.
We omit the details here and refer to \cite[Prop.~5.3]{MT:21} for a rough outline of the proof.

\begin{prop}\label{prop:contr ests of F & G}
Consider $p$, $q \in (1,\infty)$ such that $\frac{1}{p} + \frac{3}{2q} < \frac{3}{2}$ as well as $v_1$, $v_2 \in \oB_{\E_1}(0,R)$, where $v_i = (u_i,\pi_i,\eta_{1,i},\eta_{2,i},d_{1,i},d_{2,i})$.
Then there is $C > 0$ such that for $F$ and $G$ from \eqref{eq:RHS F} and \eqref{eq:RHS G}, respectively, we have $\| F(v_1) - F(v_2) \|_{\E_0^u} + \| G(v_1) - G(v_2) \|_{\E_0^\eta} \le C R \cdot \| v_1 - v_2 \|_{\E_1}$.
\end{prop}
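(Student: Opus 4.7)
The plan is to mirror the term-by-term argument of \autoref{prop:self-map ests of F & G}, exploiting that every summand in $F$ and $G$ (see \eqref{eq:RHS F} and \eqref{eq:RHS G}) is a product of at most four factors of two types: on the one hand, factors built from the principal variables $u$, $\pi$, $\eta_1$, $\eta_2$ (or their derivatives), and on the other, geometric factors built from the diffeomorphism $X_\eta$ such as $a(X)$, $b$, $\nabla Y(X)$, $\del_t a(X)$, $\del_t Y(X)$, and $\det(\nabla X)^{-1}$. Using the telescoping identity
\[
\prod_{j=1}^N A_j^{(1)} - \prod_{j=1}^N A_j^{(2)} = \sum_{k=1}^N \Bigl(\prod_{j<k} A_j^{(2)}\Bigr)\bigl(A_k^{(1)} - A_k^{(2)}\bigr)\Bigl(\prod_{j>k} A_j^{(1)}\Bigr),
\]
the differences $F(v_1) - F(v_2)$ and $G(v_1) - G(v_2)$ decompose into finite sums of products in each of which exactly one factor is a difference. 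The factors that are not differenced are bounded in exactly the norms used in \autoref{prop:self-map ests of F & G}, so they contribute either a constant or a factor of $CR$.

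The key new ingredient is a Lipschitz-with-respect-to-$\eta$ version of \autoref{lem:est of terms related to the trafo}. Since the map in \eqref{eq:diffeo X} is affine in $\eta$, the factors $\nabla X$, $\Cof(\nabla X)$ and $\det(\nabla X)$ depend polynomially on $\eta_1$ and $\nabla_s \eta_1$; combined with the smallness of $\eta_1$ from \autoref{lem:ests of plate and fluid vars}(a), a neighbourhood-version of the inverse function theorem yields smooth dependence of $Y = X^{-1}$, hence of $a(X)$, $b$, $\nabla Y(X)$ on $\eta$. Differentiating the explicit expressions with respect to $\eta$ and applying the mean value theorem together with \autoref{lem:ests of plate and fluid vars} and \autoref{lem:embs of the max reg space}, one obtains Lipschitz bounds such as
\[
\| a(X_{\eta_{1,1}}) - a(X_{\eta_{1,2}}) \|_{\rL^\infty(0,T;\rL^\infty(\Omega_\rf))} \le C \| \eta_{1,1} - \eta_{1,2} \|_{\rL^\infty(0,T;\rC^1(\overline{\omega}))} \le C \| v_1 - v_2 \|_{\E_1},
\]
and analogous estimates for the differences of $\tfrac{\del a_{ik}}{\del x_j}(X)$, $\tfrac{\del^2 a_{ik}}{\del x_j^2}(X)$, $\del_t a(X)$, $\del_t Y(X)$, $\tfrac{\del^2 Y_l}{\del x_j^2}(X)$, and $\det(\nabla X)^{-1}$, each in the very same space used in \autoref{lem:est of terms related to the trafo} to bound the individual factor.

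The main obstacle is therefore the careful bookkeeping required to produce this Lipschitz version of \autoref{lem:est of terms related to the trafo}: one must verify that every geometric factor, when differenced with respect to $\eta$, admits the same integrability and regularity bounds as in the self-map case, with the norm of $v_1 - v_2$ replacing one copy of $R$. Once this is in place, for each of the roughly ten summands of $F$ and four of $G$, one applies the telescoping identity, estimates the differenced factor by $C \| v_1 - v_2 \|_{\E_1}$ via either \autoref{lem:ests of plate and fluid vars} (for a principal-variable factor) or the Lipschitz \autoref{lem:est of terms related to the trafo} (for a geometric factor), and estimates the remaining factors exactly as in \autoref{prop:self-map ests of F & G} to produce an overall prefactor $CR$. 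H\"older's inequality with the same exponent configurations chosen in \autoref{prop:self-map ests of F & G} then yields the desired bound $\| F(v_1) - F(v_2) \|_{\E_0^u} + \| G(v_1) - G(v_2) \|_{\E_0^\eta} \le CR \cdot \| v_1 - v_2 \|_{\E_1}$.
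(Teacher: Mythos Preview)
Your proposal is correct and follows exactly the approach the paper indicates: the paper itself omits the proof, stating only that it ``can be proved in the same way as \autoref{prop:self-map ests of F & G}'' and referring to \cite[Prop.~5.3]{MT:21} for an outline. Your sketch via the telescoping identity, combined with a Lipschitz version of \autoref{lem:est of terms related to the trafo} and the same H\"older exponent configurations as in the self-map estimate, is precisely what is meant.
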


\subsection{Proof of existence of the strong time-periodic solution}
\ 

Prior to proving existence of a strong time-periodic solution to the multilayered fluid-structure interaction problem, let us make precise the fixed point argument.
For $\oB_{\E_1}(0,R)$ as above, we define
\begin{equation}\label{eq:sol map Phi}
    \Phi \colon \oB_{\E_1}(0,R) \to \E_1
\end{equation}
as follows:
Given $v = (u,\pi,\eta_1,\eta_2,d_1,d_2) \in \oB_{\E_1}(0,R)$, we denote by $\Tilde{v} \coloneqq \Phi(v)$ the solution to
\begin{equation}\label{eq:lin probl for fixed point arg}
    \left\{
    \begin{aligned}
        \del_t \Tilde{u} - \mdiv \sigma_\rf(\Tilde{u},\Tilde{\pi})
        &= F(u,\pi,\eta_1,\eta_2) + f, \, \mdiv \Tilde{u} = 0, &&\tin (0,T) \times \Omega_\rf,\\
        \del_t \Tilde{\eta}_1 = \Tilde{\eta}_2, \, \del_t \Tilde{\eta}_2 + P_\rm \Delta_s^2 \Tilde{\eta}_1 - \Delta_s \Tilde{\eta}_2
        &= P_\rm \left(G(u,\pi,\eta_1,\eta_2)\right) + P_\rm g\\
        &\qquad -P_\rm \Bigl(\sigma_\rf(\Tilde{u},\Tilde{\pi}) \bigr|_{\Gamma_0} \mre_3 \cdot \mre_3\Bigr)\\
        &\qquad + P_\rm \Bigl((\sigma_\rs(\Tilde{d}_1,\Tilde{d}_2) \mre_3)\bigr|_{\Gamma_0} \cdot \mre_3\Bigr), &&\tin (0,T) \times \omega,\\
        \int_\omega \Tilde{\eta}_1(t,s) \srd s 
        &= 0, &&\tfor t \in (0,T),\\
        \del_t \Tilde{d}_1 = \Tilde{d}_2, \, \del_t \Tilde{d}_2 - \mdiv \sigma_\rs(\Tilde{d}_1,\Tilde{d}_2)
        &= h, &&\tin (0,T) \times \Omega_\rs,\\
        \Tilde{u}
        &= P_\rm(\Tilde{\eta}_2) \mre_3, &&\ton (0,T) \times \Gamma_0,\\
        \Tilde{d}_1
        &= P_\rm(\Tilde{\eta}_1) \mre_3, &&\ton (0,T) \times \Gamma_0,\\
        \Tilde{u}
        &= 0, &&\ton (0,T) \times \Gamma_\rf,\\
        \Tilde{d}_1
        &= 0, &&\ton (0,T) \times \Gamma_\rs,\\
        \Tilde{\eta}_1 = \nabla_s \Tilde{\eta}_1 \cdot n_{\omega}
        &= 0, &&\ton (0,T) \times \del \omega,
    \end{aligned}
    \right.
\end{equation}
with periodicity conditions
\begin{equation*}
    \left\{
    \begin{aligned}
        \Tilde{u}(0,\cdot) 
        &= \Tilde{u}(T,\cdot), &&\tin \Omega_\rf,\\
        \Tilde{\eta}_1(0,\cdot) = \Tilde{\eta}_1(T,\cdot), \, \Tilde{\eta}_2(0,\cdot) 
        &= \Tilde{\eta}_2(T,\cdot), &&\tin \omega,\\
        \Tilde{d}_1(0,\cdot) = \Tilde{d}_1(T,\cdot), \, \Tilde{d}_2(0,\cdot) 
        &= \Tilde{d}_2(T,\cdot), &&\tin \Omega_\rs.
    \end{aligned}
    \right.
\end{equation*}
Note that the terms on the right-hand side $F$ and $G$ have been introduced in \eqref{eq:RHS F} and \eqref{eq:RHS G}, respectively.
Thanks to \autoref{cor:max per reg result multilayered}, the solution map is well-defined provided 
\begin{equation*}
    F(u,\pi,\eta_1,\eta_2) + f \in \rL^p(0,T;\rL^q(\Omega_\rf)^3), \enspace G(u,\pi,\eta_1,\eta_2) + g \in \rL^p(0,T;\rL^q(\omega)) \tand h \in \rL^p(0,T;\rL^q(\Omega_\rs)^3).
\end{equation*}
For the nonlinear terms $F$ and $G$, this is a result of \autoref{prop:self-map ests of F & G}, while for the external forcing terms, this property follows by assumption, see the assertions of \autoref{thm:ex of strong per sol} and \autoref{thm:strong time per sol in ref config}.

We now prove the existence of a strong time-periodic solution in the reference configuration.

\begin{proof}[Proof of \autoref{thm:strong time per sol in ref config}]
The aim is to show that for $R > 0$ sufficiently small, the solution map $\Phi$, introduced in \eqref{eq:sol map Phi} and representing the solution to the linearized problem \eqref{eq:lin probl for fixed point arg}, is a self-map and contraction on the ball~$\oB_{\E_1}(0,R)$ for $R > 0$ sufficiently small and sufficiently small external forcing terms $f$, $g$ and $h$.

For this purpose, let $v = (u,\pi,\eta_1,\eta_2,d_1,d_2) \in \oB_{\E_1}(0,R)$.
From \autoref{cor:max per reg result multilayered}, it follows that $\Phi(v) \in \E_1$.
Additionally invoking \autoref{prop:self-map ests of F & G} and the assumption on the external forcing terms, we also get
\begin{equation*}
    \begin{aligned}
        \| \Phi(v) \|_{\E_1} \le C\bigl(\| F(u,\pi,\eta_1,\eta_2) \|_{\E_0^u} + \| f \|_{\E_0^u} + \| G(u,\pi,\eta_1,\eta_2) \|_{\E_0^\eta} + \| g \|_{\E_0^\eta} + \| h \|_{\E_0^d}\bigr) \le C(R^2 + \eps).
    \end{aligned}
\end{equation*}
On the other hand, for $v_1$, $v_2 \in \oB_{\E_1}(0,R)$, by \autoref{prop:contr ests of F & G}, we find
\begin{equation*}
    \| \Phi(v_1) - \Phi(v_2) \|_{\E_1} \le C R \cdot \| v_1 - v_2 \|_{\E_1}.
\end{equation*}
Moreover, by the embedding from \autoref{lem:embs of the max reg space}(b) and the Sobolev embedding $\rB_{qp}^{4-\nicefrac{2}{p}}(\omega) \hookrightarrow \rL^\infty(\omega)$, which holds for all $p$, $q \in (1,\infty)$, see, e.g., \cite[Thm.~4.6.1]{Tri:78}, for $v \in \oB_{\E_1}(0,R)$, there exists a constant $C_1 > 0$ such that
\begin{equation}\label{eq:est of eta}
    \| \eta(t,\cdot) \|_{\rL^\infty(\omega)} \le C_1 \cdot \| v \|_{\E_1} \le C_1 R \tforall t \in [0,T].
\end{equation}

Thus, choosing $R \coloneqq \min\left\{\frac{1}{2},\frac{1}{4C},\frac{\delta_0}{C_1}\right\}$, where $\delta_0$ is as introduced in \eqref{eq:concr bound for plate displacement}, and $\eps = \eps(R) \coloneqq \frac{R}{4C}$, we get
\begin{equation*}
    \| \Phi(v) \|_{\E_1} \le \frac{R}{2} \tand \| \Phi(v_1) - \Phi(v_2) \|_{\E_1} \le \frac{1}{2} \cdot \| v_1 - v_2 \|_{\E_1}.
\end{equation*}
In other words, $\Phi$ is a self-map and contraction on $\oB_{\E_1}(0,R)$, and the contraction mapping principle yields the existence of a unique fixed point of $\Phi$.
By construction, this shows the existence of a unique solution $v = (u,\pi,\eta_1,\eta_2,d_1,d_2) \in \oB_{\E_1}(0,R)$ to the transformed interaction problem \eqref{eq:transformed syst}.
Let us observe that the regularities asserted in \autoref{thm:strong time per sol in ref config} are a consequence of the definition of $\E_1$.
In addition, thanks to $R \le \frac{\delta_0}{C_1}$ and \eqref{eq:est of eta}, it follows that the smallness condition \eqref{eq:smallness ass displacement} is satisfied for all $t \in [0,T]$.
This yields the diffeomorphism property of $X(t,\cdot)$ for all $t \in [0,T]$, see also \autoref{rem:smallness of eta and diffeo}, and it also implied that no self-intersection of the domain occurs, i.e., the domain does not degenerate.
\end{proof}

The proof of \autoref{thm:ex of strong per sol} now is a direct consequence of \autoref{thm:strong time per sol in ref config} that we have just proved.

\medskip 

{\bf Acknowledgements.}
\small{The authors would like to thank the anonymous referees for their careful reading of the manuscript and their valuable comments.
This research is supported by the Basque Government through the BERC 2022-2025 program and by the Spanish State Research Agency through BCAM Severo Ochoa excellence accreditation CEX2021-01142-S funded by MICIU/AEI/10.13039/501100011033 and through Grant PID2023-146764NB-I00 funded by MICIU/AEI/10.13039/501100011033 and cofunded by the European Union. 
Felix Brandt acknowledges the support by the German National Academy of Sciences Leopoldina through the Leopoldina Fellowship Program with grant number~LPDS 2024-07 and the support by DFG project FOR~5528. 
He also extends his gratitude to the Basque Center for Applied Mathematics for their generous hospitality during his visit, where a portion of this work was completed.
Arnab Roy would like to thank the Alexander von Humboldt-Foundation, and the support by Grant RYC2022-036183-I funded by MICIU/AEI/10.13039/501100011033 and by ESF+.}

\bibliography{strong_multilayered_FSI}
\bibliographystyle{siam}

\end{document}